\NeedsTeXFormat{LaTeX2e}
\documentclass[11pt,a4paper,reqno]{amsart}
\usepackage[T1]{fontenc}
\usepackage[utf8]{inputenc}

\usepackage{xcolor}
\usepackage[all]{xy}
\usepackage{dsfont}
\usepackage{amsmath,amsfonts,amssymb,amsthm}
\usepackage[bbgreekl]{mathbbol} 
\DeclareSymbolFontAlphabet{\mathbbm}{bbold} 
\DeclareSymbolFontAlphabet{\mathbb}{AMSb} 
\usepackage{mathrsfs}
\usepackage{mathtools}
\usepackage{mathptmx} 
\usepackage{stmaryrd}
\usepackage{enumerate}
\newtheorem{theorem}{Theorem}
\newtheorem{proposition}{Proposition}
\newtheorem{corollary}{Corollary}
\newtheorem{lemma}{Lemma}
\newtheorem{definition}{Definition}
\newtheorem{remark}{Remark}
\newcommand{\R}{\mathds R}
\newcommand{\M}{\mathcal M}
\newcommand{\B}{\mathcal B}
\newcommand{\fiber}{\mathcal F}
\newcommand{\ver}{\mathcal{V}}
\newcommand{\hor}{\mathcal{H}}
\newcommand{\leg}{\mathcal{L}}
\newcommand{\Ad}{\mathcal{A}}
\newcommand{\dif}{\mathrm{d}}
\newcommand{\T}{\mathbf T}
\newcommand{\At}{\mathbf A}
\newcommand{\LB}{\tilde{L}}
\newcommand{\C}{\mathcal C}

\numberwithin{equation}{section}
\usepackage{hyperref}

\usepackage[normalem]{ulem} 


\def\bb#1\eb{\textcolor{blue}{#1}} 
\def\br#1\er{\textcolor{red}{#1}} %
\def\bm#1\em{\textcolor{purple}{#1}} %

\author[M. Huber]{Matthieu Huber}
\address{Departamento de Matem\'aticas, \hfill\break\indent
Universidad de Murcia, Campus de Espinardo,\hfill\break\indent
30100 Espinardo, Murcia, Spain}
\email{matthieu.huber@um.es}

\title{The fundamental equations of a pseudo-Finsler submersion}
\author[M. A. Javaloyes]{Miguel Angel Javaloyes}
\address{Departamento de Matem\'aticas, \hfill\break\indent
Universidad de Murcia, Campus de Espinardo,\hfill\break\indent
30100 Espinardo, Murcia, Spain}
\email{majava@um.es}

\thanks{MAJ was supported by the project  PGC2018-097046-B-I00 funded by MCIN/ AEI /10.13039/501100011033/ FEDER ``Una manera de hacer Europa''.}
\thanks{2020 {\it Mathematics Subject Classification:} Primary 53B40, 53C60, 53B30\\
\textbf{Key words:} Finsler metrics, Flag Curvature, Submersions, Fundamental equations}

\begin{document}



\begin{abstract}
The main result in this paper is the generalisation of the fundamental equations of a Riemannian submersion presented in the 1966 article by Barrett O'Neill \cite{1966ONeill} to the context of pseudo-Finsler submersions.  In the meantime, we also explore some basic properties of the O'Neill fundamental tensors of the submersion and study Finsler submersions with totally geodesic fibers. 
\end{abstract}
\maketitle
\section{Introduction}
Submersions can be thought as a dual concept to submanifolds. The study of the equations describing the geometry of a submanifold of a Riemannian manifold has a long tradition, having the Gauss and Codazzi equations between the most celebrated results in the theory. These equations have been generalized to the Finsler setup from the 1930s  (see \cite{2022HuberJavaloyes} for a summary of the topic). The systematic study of Riemannian submersions is much more recent, having its origin in the foundational paper by O'Neill about the fundamental equations \cite{1966ONeill} (see also \cite{1967Gray}). It is very natural to ask oneself about the generalization to the Finsler realm of these fundamental equations, but the closest achievement until now is the pioneering paper \cite{2001AlDur}, where the authors obtain some  results
using the osculating metric associated with a geodesic vector field. In this paper, we intend to fill this gap providing a complete set of fundamental equations of a pseudo-Finsler submanifold, which describe the curvature tensor of the total space in terms of the configuration O'Neill tensors $\T$ and $\At$ and the curvature tensors of the fibers and the base manifold. We recall that the fundamental equations have been very helpful to compute the curvature tensor in many different situations as for example for models of spacetimes and to find new Einstein manifolds (see \cite[Chapter 9]{Besse}). We hope the equations presented in this manuscript will play its role in the development of Finsler spacetimes to find solutions of the Finslerian Einstein equations (see for exampe \cite{HPV19,JSV21}) and that will motivate the development of the theory of Finslerian submersions as  in the case of  classical Riemannian submersions (see for example \cite{FaIaPa04}). They will also play an important role in the theory of Finsler foliations \cite{AAD19,AAJ19,AEi22} and isoparametric functions \cite{HCSR21,Xu18}, which have already deserved some attention.  Finsler submersions can also be useful in the study of some types of homogeneous Finsler manifolds \cite{XuZh18}. 

It is well-known that Finsler Geometry entails cumbersome computations in coordinates. To overcome this problem we have used the anisotropic tensor calculus  recently  developed in \cite{Jav19,Jav20}. The main idea is to 
perform all the computations with the anisotropic connections and when necessary, to fix a local vector field $V$ and to work with an associated affine connection $\nabla^V$. There are some priveleged choices of $V$ which economize computations. Moreover, it turns out that there is no need to use expressions in coordinates. The paper is organized as follows. In Section \ref{pseudo-Finsler}, we introduce the concept of pseudo-Finsler metric and its main features and we also briefly describe how the anisotropic tensor calculus works. In Section \ref{pseudo-sub}, we introduce the definition of pseudo-Finsler submersion and prove several basic properties, as for example that when one fixes a horizontal vector $v$, then the  differential map of $\sigma$ with the fundamental tensor $g_v$ has a structure of pseudo-Riemannian submersion (see Lemma \ref{Lemma1}). This result was ennunciated in \cite[Prop. 2.2]{2001AlDur}. We also prove that when the pseudo-Finsler metrics are defined in the whole tangent bundle, then there exists a unique horizontal lift of every non-zero vector in the base. Finally, we compute an expresion which relates the Cartan tensors of the total space and the base  being  the first one evaluated in at least two $g_v$-horizontal vectors. In Section \ref{fundTA}, we introduce the fundamental tensors $\T$ and $\At$ and study their properties. One of the main differences with the Riemmannian case is that now the horizontal part of any vector field is an anisotropic vector field (see Section \ref{pseudo-Finsler} for its definition), this means that for every  admissible  vector  $v$ there is a different horizontal part and we will have to deal with fiber derivatives of the horizontal and  vertical  parts, which are computed in Proposition \ref{dotPartialTopProposition}. The properties of $\T$ and $\At$ are very similar to those of their classical versions, with some exceptions. For example, the tensor $\At$  has  a much more involved expression in terms of the Lie Bracket (see Proposition \ref{TAProposition2}). To compute the Gauss formula and its dual, we need to introduce two anisotropic tensors $\hat Q$ and $\tilde Q$, because as it was known from the theory of submanifolds, the Chern connection of the fibers does not coincide with the tangent part of the connection of the ambient space. A similar situation happens with the horizontal part of  the Chern connection applied to $g_V$-horizontal vector fields for some horizontal vector field $V$  (see Proposition \ref{VXYhor}). In the subsection \ref{covderTA}, the formulas for the covariant derivatives of $\T$ and $\At$ are obtained, this time matching the classical version except for the cyclic formula in \cite[Lemma 7]{1966ONeill}. In Section \ref{fundeq}, we finally obtain the fundamental equations for the curvature tensor. As a first step, we introduce the vertical and horizontal curvature tensors in Definition \ref{RTopBotDefinition}. Then we decompose the curvature tensor using these horizontal and vertical ones and the tensors $\T$ and $\At$ in Theorem \ref{Unified} and  obtain  more specific formulas computing all the involved terms when the curvature tensor is evaluated in vertical and $g_v$-horizontal vectors in Corollary \ref{fundeqgeneral}. When $v$ is vertical, we can obtain a generalized Gauss equation as in the case of submanifolds, and when $v$ is horizontal a Dual Gauss equation (see Theorem \ref{gaussanddual}). Remarkably, Codazzi and its dual hold for an arbitrary admissible $v$.  As an application, we obtain formulas for the flag curvature with arbitrary flagpole of the ambient space in Corollary \ref{flagcurgen}. Moreover, we also recover the O'Neill formulas (see \cite[Corollary 1]{1966ONeill}) for the flag curvature when the flagpole is horizontal in Corollary \ref{horflagpole}, while the formula for a vertical flagpole is slightly different (see Corollary \ref{verflagpole}).  In the last section \ref{totallygeo}, we study Finslerian submersions with totally geodesic fibers. It is well-known that in the Riemannian case, this condition implies that the fibers are isometric in a natural way. In the Finslerian setup we need to add a further condition, namely, the horizontal regularity in Definition \ref{horreg}, to obtain these isometries. Then under this condition, we prove that a Finsler submersion has totally geodesic fibers if and only if it is an associated fiber bundle of a principle bundle, Theorem \ref{totgeoth}.
\section{Pseudo-Finsler manifolds}\label{pseudo-Finsler}

Given a manifold $\M$, we will denote by $\pi\colon T\M\to\M$ the natural projection from its tangent bundle $T\M$.
\begin{definition}
Let $\Ad\subset T\M\setminus\lbrace 0\rbrace$ be an open subset invariant under positive scalar multiplication. We say that a smooth function $L\colon\Ad\to\R$ is a pseudo-Finsler metric when it satisfies:
\begin{enumerate}[(i)]
\item positive homogeneity of degree $2$,  namely, $L(\lambda v)=\lambda^2L(v)$ $\forall\lambda>0$ and $v\in \Ad$, 
\item non-degeneracy of its associated fundamental tensor $g_{ v}$ defined for each $v\in\Ad$ and $e,h\in T_{\pi(v)}\M$ by
\begin{equation*}
g_v(e,h) = \tfrac{1}{2} \left. \tfrac{\partial^2 L(v+se+th)}{\partial s\partial t} \right\vert_{s,t=0}
\,.
\end{equation*}
\end{enumerate}
 Moreover, when $\Ad=T\M\setminus \bf 0$ and $g_v$ is always positive definite, we will say that $L$ is a Finsler metric, and it is common to work with the positive homogeneous function of degree one $F=\sqrt{L}$ rather than with $L$. 
\end{definition}
 Given a pseudo-Finsler metric, we will say that a vector $v$ is admissible if $v\in \Ad$. Moreover, given an open subset $\Omega\subset \M$, we will say that a vector field $V\in\mathfrak{X}( \Omega)$ is admissible if $V_p\in \Ad$ for all $p\in\Omega$. We will say that $V$ is locally admissible, if it is admissible in a certain neighborhood $\Omega$ of a given point. 
The  Cartan tensor $C$ associated with $L$ for each $v\in\Ad$ and $b,e,h\in T_{\pi(v)}\M$  is defined  by
\begin{equation*}
C_v(b,e,h) = \tfrac{1}{2} \left. \tfrac{\partial^2 g_{v+tb}(e,h)}{\partial t} \right\vert_{t=0} = \tfrac{1}{4} \left. \tfrac{\partial^3 L(v+rb+se+th)}{\partial r\partial s\partial t} \right\vert_{r,s,t=0}
\,.
\end{equation*}
 Observe that the Cartan tensor is symmetric and, by homogeneity, $C_v(v,\cdot,\cdot)=0$ for every $v\in\Ad$. 

Let us denote by $\mathfrak{X}(\M)$ and $\mathfrak{X}^*(\M)$ the set of (smooth) vector fields and one-forms on $\M$, respectively.  We will also denote by $T^*\M$ the cotangent bundle of $\M$ and  by $\overset{r)}\bigotimes T\M\otimes \overset{s)}\bigotimes T^* \M$ the classical tensor bundle over $\M$ of type $(r,s)$. Now denoting $\pi_\Ad:\Ad\rightarrow \M$ the restriction of the natural projection $\pi$, we will consider the pullbacked bundle $\pi_\Ad^*(\overset{r)}\bigotimes T\M\otimes \overset{s)}\bigotimes T^* \M)$. We will say that a smooth section of this bundle $T:\Ad\rightarrow \pi_\Ad^*(\overset{r)}\bigotimes T\M\otimes \overset{s)}\bigotimes T^* \M)$ is an $\Ad$-anisotropic tensor field. When the subset $\Ad$ is obvious from the context, we will say simply ``anisotropic tensor''. Observe that for every $v\in \Ad$, $T_v$ (the evaluation of $T$ at $v\in\Ad$) is a multilinear map
\[T_v:(T^*_{p}M)^r\times (T_{p}M)^s\rightarrow \R,\]	
with $p=\pi(v)$. In particular, the fundamental tensor $g$ and the Cartan tensor $C$ of a pseudo-Finsler metric are  $\Ad$-anisotropic tensors of type $(0,2)$ and $(0,3)$, respectively.  We will denote by $\mathcal{T}^r_s(\M_\Ad)$ the space of $\Ad$-anisotropic tensor fields. As in the classical case, $\mathcal{T}^1_0(\M_\Ad)$ can be identified with $\Ad$-anisotropic vector fields $\mathcal X$,  $\Ad\ni v\rightarrow {\mathcal X}_v\in T_{\pi(v)}\M$.  Moreover, if 
${\mathcal F}(\Ad)$ is the space of smooth real functions on $\Ad$,
 $\Ad$-anisotropic tensor fields can be described equivalently as ${\mathcal F}(\Ad)$-multilinear maps
\begin{equation*}
T\colon {\mathcal T}^0_1(\M_\Ad)^r\times   {\mathcal T}^1_0(\M_\Ad)^s \rightarrow {\mathcal F}(\Ad)
\end{equation*}
very much as in the classical case, a tensor field can be thought of as a ${\mathcal F}(\M)$-multilinear maps.
\begin{definition}
An ($\Ad$-)anisotropic connection is a  map
\begin{equation*}
 \nabla\colon \mathfrak X(\M)\times\mathfrak X(\M) \rightarrow \mathcal{T}^1_0(\M_\Ad)\,,\quad (E,H)\rightarrow \nabla_EH
\end{equation*}
satisfying for all vector fields $B,E,H$ and smooth  real  function $f$  on $\M$
\begin{enumerate}[(i)]
\item $\nabla_E(H+B)=\nabla_EH+\nabla_EB$,
\item $\nabla_E(fH)=(E(f)\cdot Y)\circ\pi_\Ad+(f\circ \pi_\Ad)\nabla_EH$,
\item $\nabla_{E+B}H=\nabla_EH+\nabla_BH$,
\item $\nabla_{fE}H=(f\circ\pi_\Ad)\nabla_EH$.
\end{enumerate}
\end{definition}
We will use the notation $\nabla^v_EH:=(\nabla_EH)_v$. Moreover, 
since the expression $\nabla^v_EH$ depends only on the vector $e=E_{\pi(v)}$ rather than  on  its extension $E$, we shall prefer to write $\nabla^v_eH$.
Given a vector field $V$ on an open subset $\Omega$ of $\M$ which is $\Ad$-admissible, namely, such that $V_p\in \Ad$ for all $p\in\Omega$, we can consider an affine connection $\nabla^V$ on $\Omega$ defined for each $E,H\in {\mathfrak X}(\M)$ by $(\nabla^V_EH)_p:=(\nabla_EH)_{V_p}\equiv \nabla^{V_p}_EH $. Associated with the anisotropic connection there is a covariant derivative along curves which depends on the choice of an admissible vector field along the curve (with values in $\Ad$ at every instant). This covariant derivative is constructed locally with the Christoffel symbols of the anisotropic connection and for a curve $\gamma:[a,b]\rightarrow \M$ and an admissible vector field $W\in  {\mathfrak X}(\gamma)$,  it will be denoted by $D^W_\gamma: {\mathfrak X}(\gamma)\rightarrow  {\mathfrak X}(\gamma)$, $X\rightarrow D^W_\gamma X$. When the curve $\gamma$ is regular at $p=\gamma(t)$, it is given by $(\nabla^{\tilde W}_V\tilde X)_p$, considering vector fields $\tilde W,V,\tilde X$ which are local extensions of $W,\dot\gamma,X$, respectively (see \cite[Section 2.2]{Jav20} for more details).  Given an $\Ad$-anisotropic tensor $T$, we can also define a classical tensor of the same type $T_V$ as $(T_V)_p=T_{V_p}$.  Departing from an anisotropic connection it is possible to define the covariant derivative of any anisotropic tensor as it was shown in \cite{Jav19}. These covariant derivatives can be obtained by the classical product rule, and they can also be expressed in terms of $\nabla^V$ with the help of fiber derivatives. If $T\in {\mathcal T}^r_s(\M_\Ad)$, then its fiber derivative  $\dot\partial T\in {\mathcal T}^r_{s+1}(\M_\Ad)$  is defined as
\begin{equation*}
(\dot\partial_e T)_v
=
\left.\tfrac{\partial T_{v+te}}{\partial t} \right\vert_{t=0}
\end{equation*}
where $e\in T_{\pi(v)}\M$  is the $s+1$-covariant component of $\dot\partial T$ and sometimes we will put it in its natural place.  
 A key observation is that anisotropic connections allow us to define derivations of functions $\mathcal F(\Ad)$. Indeed, for any $v\in \Ad$ and any choice of local admissible extension $V$ of $v$, $\nabla_Ef$ is smooth function on $\Ad$ given by 
\begin{equation}\label{nablaf}
(\nabla_E f)(v)=E_{\pi(v)}(f(V))-\dot\partial_{\nabla^v_EV} f.
\end{equation}
It turns out that last expression is independent of the choice of $V$  (see \cite[Lemma 9]{Jav19}). 
Moreover, this implies that given an anisotropic tensor $T \in \mathcal{T}^r_s(\M_\Ad)$,  vector fields  $X_1,\ldots,X_r\in {\mathfrak X}(M)$ and  one-forms  $\theta^1,\ldots,\theta^s\in {\mathfrak X}^*(M)$,  the covariant derivative
of $T$ with respect to $E$, denoted $\nabla_ET,$ is given  by
\begin{align}
\nabla_ET(\theta^1,\ldots,\theta^s,X_1,\ldots,X_s) = &\nabla_E(T(\theta^1,\ldots,\theta^r,X_1,\ldots,X_s))\nonumber\\&-
\sum_{i=1}^r (T(\theta^1,\ldots,\nabla^v_E\theta^i,\ldots,\theta^r,X_1,\ldots,X_s)
\nonumber\\&-\sum_{j=1}^s (T(\theta^1,\ldots,\theta^r,X_1,\ldots,\nabla^v_EX_j,\ldots,X_s)\label{covder}
\end{align}
while  the first term on the right, which is  the anisotropic  covariant derivative of $v\rightarrow T_v(\theta^1,\ldots,\theta^r,X_1,\ldots,X_s)$, can be expressed as
\begin{multline}\label{fiberder}
\nabla_E (T(\theta^1,\ldots,\theta^r,X_1,\ldots,X_s))(v)=E_{\pi(v)}(T_V(\theta^1,\ldots,\theta^r,X_1,\ldots,X_s))\\-(\dot\partial_{\nabla^v_EV} T)_v(\theta^1,\ldots,\theta^r,X_1,\ldots,X_s)).
\end{multline}
Observe that the anisotropic connection can also be applied to anisotropic vector fields. Indeed, if ${\mathcal X}\in{\mathcal T}^1_0(\M_\Ad)$, then
\begin{equation}\label{nablaP}
\nabla^v_e{\mathcal X}=\nabla^v_e {\mathcal X}_V-\dot\partial {\mathcal X}_v(\nabla^v_eV), 
\end{equation}
 for any choice of admissible local extension $V$ of $v$. 
We may define then the associated curvature tensor $R_v:{\mathfrak X}(\M)\times {\mathfrak X}(\M)\times {\mathfrak X}(\M)\rightarrow T_{\pi(v)}\M $ by
\begin{equation*}
R_v(E,H)B
=
\nabla^v_E \nabla_HB
-
\nabla^v_H \nabla_EB
-
\nabla^v_{[E,H]}B.
\end{equation*}
Then $R$ determines an anisotropic tensor field of type $(1,3)$, with the standard identifications (see \cite{Jav19,Jav20}). Moreover, this tensor can be expressed in terms of the curvature tensor $R^V$ of the affine connection $\nabla^V$ for any local admissible extension  $V$ of $v$:
\begin{equation*}
R_V(E,H)B
=
R^V(E,H)B
-
P_V(H,B,\nabla^V_EV)
+
P_V(E,B,\nabla^V_HV)
\,,
\end{equation*}
 where $P$ is the fiber derivative of $\nabla$ defined as
\begin{equation}\label{nablader}
P_v(E,H,B)
=
\left. \tfrac{\partial}{\partial t} \nabla^{v+tB\vert_{\pi(v)}}_EH \right\vert_{t=0}
\,,
\end{equation}
 for $E,H$ and $B$ arbitrary vector fields on $\M$ (see \cite[Prop. 2.5]{Jav20}). 
Let us introduce a privileged anisotropic connection associated with a pseudo-Finsler manifold. 
\begin{definition}
The Levi-Civita--Chern connection of $(\M,L)$ a pseudo-Finsler manifold is the unique torsion-free $g$-compatible anisotropic connection $\nabla$, satisfying namely for each admissible $v\in\Ad$ and vector fields $E,H\in\mathfrak{X}(\M)$
\begin{enumerate}[(i)]
\item $\nabla^v_EH-\nabla^v_HE-[E,H]\vert_{\pi(v)}=0$,
\item $\nabla g = 0$.
\end{enumerate}
\end{definition}
By our definition of covariant derivation of anisotropic tensors, the metric-preser\-ving condition $\nabla g = 0$ is also referred to as the almost-compatibility condition due to the equivalent identity for each $v\in\Ad$ with locally admissible extension $V$ and $e,h,b\in T_{\pi(v)}\M$ with respective extensions $E$, $H$ and $B$
\begin{equation*} e (g_V(H,B)) \vert_{\pi(v)}
=
g_v(\nabla^v_eH,B)
+
g_v(H,\nabla^v_eB)
+
2C_v(h,b,\nabla^v_eV)
\, ,
\end{equation*}
 observing that the fiber derivative of $g$ is $2C$. 
Uniqueness can be easily checked from the associated Koszul formula
\begin{multline}\label{koszul}
2g_v(\nabla^v_eH,B)
=
e(g_V(B,H))+h(g_V(E,B))-b (g_V(E,H)) 
\\
+
g_v([B,E],h)+g_v(e,[B,H])+g_v([E,H],B)
\\
-
2C_v(b,h,\nabla^v_eV)
-
2C_v(e,b,\nabla^v_hV)
+
2C_v(e,h,\nabla^v_bV)
\end{multline}
(see for example \cite[Th. 4]{JSV22}). 
\begin{lemma}
Let $\mathcal{X}$ be an anisotropic vector field, $v\in\Ad$ and $e,h\in T_{\pi(v)}\M$.   If $\nabla$ is an anisotropic connection and $P$ its vertical derivative defined in \eqref{nablader},  then 
\begin{equation}\label{dotPartialNablaMathcalX}
(\dot\partial (\nabla_e \mathcal{X}))_v(h)=P_v(e,\mathcal{X}_v,h)+(\nabla_e(\dot\partial \mathcal{X}))_v(h)-(\dot\partial \mathcal{X})_v(P_v(e,v,h)).
\end{equation}
\end{lemma}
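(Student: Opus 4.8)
The plan is to differentiate the identity~\eqref{nablaP} for $\nabla_e\mathcal{X}$ with respect to the base vector, exploiting the freedom in the choice of the auxiliary admissible extension. Fix $p=\pi(v)$ and choose local vector fields $E,B$ near $p$ with $E_p=e$, $B_p=h$, and an admissible local extension $V$ of $v$; we will in fact take $V$ with $\nabla^v_eV=0$ at $p$ (a geodesic-type extension, obtained by prescribing the first derivatives of the components of $V$ at $p$), so that several terms below drop out. For $|t|$ small the field $V+tB$ is admissible and extends $v+th$, hence~\eqref{nablaP} gives
\begin{equation*}
(\nabla_e\mathcal{X})_{v+th}=\bigl(\nabla_E\mathcal{X}_{V+tB}\bigr)_{v+th}-(\dot\partial\mathcal{X})_{v+th}\!\left(\bigl(\nabla_E(V+tB)\bigr)_{v+th}\right),
\end{equation*}
and differentiating the left-hand side at $t=0$ yields exactly $(\dot\partial(\nabla_e\mathcal{X}))_v(h)$. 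It remains to differentiate the two terms on the right.

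For the first term, set $Y_t:=\mathcal{X}_{V+tB}$, a smooth family of vector fields with $Y_0=\mathcal{X}_V$ and $\tfrac{d}{dt}\big|_{t=0}Y_t=(\dot\partial\mathcal{X})_V(B)$; since $(\nabla_EY_t)_{v+th}$ depends on $t$ both through $Y_t$ and through the evaluation point, the chain rule splits its derivative at $0$ as
\begin{equation*}
\tfrac{d}{ds}\big|_{s=0}\!\bigl(\nabla_EY_0\bigr)_{v+sh}+\tfrac{d}{dt}\big|_{t=0}\!\bigl(\nabla_EY_t\bigr)_{v}=P_v(e,\mathcal{X}_v,h)+\nabla^v_e\!\bigl((\dot\partial\mathcal{X})_V(B)\bigr),
\end{equation*}
the first summand being the definition~\eqref{nablader} of $P$ (note $(Y_0)_p=\mathcal{X}_v$) and the second using that $\nabla_E$ is $\R$-linear in its second slot. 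To rewrite $\nabla^v_e((\dot\partial\mathcal{X})_V(B))$, apply~\eqref{nablaP} once more, now to the anisotropic vector field $(\dot\partial\mathcal{X})(B)$: since $\nabla^v_eV=0$, this $\nabla^v_e$ of the vector field $(\dot\partial\mathcal{X})_V(B)$ coincides with $\nabla^v_e$ of that anisotropic vector field, and then the product rule~\eqref{covder} for the $(1,1)$-tensor $\dot\partial\mathcal{X}$ contracted against $B$ gives
\begin{equation*}
\nabla^v_e\!\bigl((\dot\partial\mathcal{X})_V(B)\bigr)=(\nabla_e(\dot\partial\mathcal{X}))_v(h)+(\dot\partial\mathcal{X})_v(\nabla^v_eB).
\end{equation*}

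For the second term, write $w_t:=(\nabla_E(V+tB))_{v+th}$; then $w_0=\nabla^v_eV=0$ and, by the same chain-rule splitting together with~\eqref{nablader}, $\tfrac{d}{dt}\big|_{t=0}w_t=P_v(e,v,h)+\nabla^v_eB$. Because $w_0=0$, the only surviving contribution to $\tfrac{d}{dt}\big|_{t=0}(\dot\partial\mathcal{X})_{v+th}(w_t)$ is $(\dot\partial\mathcal{X})_v\bigl(\tfrac{d}{dt}\big|_{t=0}w_t\bigr)=(\dot\partial\mathcal{X})_v(P_v(e,v,h))+(\dot\partial\mathcal{X})_v(\nabla^v_eB)$. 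Collecting the two pieces, the terms $(\dot\partial\mathcal{X})_v(\nabla^v_eB)$ cancel and what remains is precisely~\eqref{dotPartialNablaMathcalX}. The only delicate point is the bookkeeping of the chain rule in the two terms where both the field and the evaluation point vary with $t$; with a generic extension $V$ one picks up in addition two terms of the form $(\dot\partial^2\mathcal{X})_v(\nabla^v_eV,h)$ and $(\dot\partial^2\mathcal{X})_v(h,\nabla^v_eV)$, with $\dot\partial^2\mathcal{X}$ the second fibre derivative, which cancel because second fibre derivatives commute; in any case the identity can be verified termwise in natural coordinates using the Christoffel symbols $\Gamma^k_{ij}(x,y)$ of $\nabla$.
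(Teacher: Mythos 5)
Your proof is correct and follows essentially the same route as the paper's: both differentiate the identity \eqref{nablaP} applied with the extension $V+tB$ of $v+th$, split the $t$-derivative by the chain rule, and recognise $P_v(e,\mathcal{X}_v,h)$, $(\nabla_e(\dot\partial\mathcal{X}))_v(h)$ and $(\dot\partial\mathcal{X})_v(P_v(e,v,h))$ among the resulting terms. The only (cosmetic) difference is that you normalise the extension so that $\nabla^v_eV=0$, whereas the paper keeps a generic admissible $V$ and absorbs the extra term via the symmetry of $\dot\partial^2\mathcal{X}$, exactly as you note in your closing remark.
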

\begin{proof}
Consider a locally admissible extension $V$ of $v$ and respective extensions $E$ and $H$ of $e$ and $h$. 
By the chain rule,  recalling \eqref{nablaP}   and observing that the second fiber derivative $\dot\partial^2 {\mathcal X}$ is symmetric, 
\begin{align*}
(\dot\partial (\nabla_e \mathcal{X}))_v(h)
= &
\left.\tfrac{\partial}{\partial t} \nabla^{v+th}_e\mathcal{X}\right\vert_{t=0}
\\
= &
\left.\tfrac{\partial}{\partial t}\left( \nabla^{v+th}_e\mathcal{X}_{V+tH}
-
(\dot\partial\mathcal{X})_{v+th}(\nabla^{v+th}_e(V+tH))\right)\right\vert_{t=0}
\\
=
&
\left.\tfrac{\partial}{\partial t} \nabla^{v+th}_e\mathcal{X}_{V} \right\vert_{t=0}
+
\left.\tfrac{\partial}{\partial t} \nabla^v_e\mathcal{X}_{V+tH} \right\vert_{t=0}
-
\left.\tfrac{\partial}{\partial t} (\dot\partial\mathcal{X})_{v+th}(\nabla^v_eV) \right\vert_{t=0}
\\
& \quad
-
\left.\tfrac{\partial}{\partial t} (\dot\partial\mathcal{X})_v(\nabla^{v+th}_eV) \right\vert_{t=0}
-
\left.\tfrac{\partial}{\partial t} (\dot\partial\mathcal{X})_v(\nabla^v_eV+t\nabla^v_eH)) \right\vert_{t=0}
\\
=
&
P_v(e,\mathcal{X}_v,h)
+
\nabla^v_e((\dot\partial \mathcal{X})_V(H))
-
(\dot\partial^2\mathcal{X})_v(\nabla^v_eV,h)
\\
& \quad
-
(\dot\partial\mathcal{X})_v(P_v(e,v,h))
-
(\dot\partial\mathcal{X})_v(\nabla^v_eH)
\\
=
&
P_v(e,\mathcal{X}_v,h)
+
(\nabla_e(\dot\partial\mathcal{X}))_v(h)
-
(\dot\partial \mathcal{X})_v((P_v(e,v,h)).
\end{align*}
\end{proof}
\section{Pseudo-Finsler submersions}\label{pseudo-sub}

Let $ (\M,L) $ and $ (\B,\LB) $ be pseudo-Finsler manifolds with $L:\Ad\subset T\M\rightarrow\R$ and $\tilde L:\tilde\Ad\subset T\B\rightarrow\R$, and $ \sigma \colon \M \to \B $ a submersion, namely, a surjective map with surjective differential. In this situation, it is well-known that the fibers $\fiber_q=\sigma^{-1}(q)$ are submanifolds of $\M$ for every $q\in\B$ and we will denote by $ \ver_p := \ker\mathrm d\sigma\vert_p $ the vertical subspace, namely, the set of vectors tangent to the fibers, which will be called {\em vertical vectors} from now on.

As a counterpart to the vertical vectors, we define the {\em horizontal vectors} in the following way: an admissible vector $v\in\Ad_p:=\Ad\cap T_p\M$ is horizontal if and only if it is $g_v$-orthogonal to $\ver_p$, namely, $g_v(v,w)=0$ for each vertical vector $w\in\ver_p$. The subset of horizontal vectors of $T_p\M$ will be denoted by $ \hor_p $, and has the structure of a (not necessarily linear) submanifold.
\begin{lemma}\label{horsub}
Given a submersion $ \sigma \colon \M \to \B $ between pseudo-Finsler manifolds $ (\M,L) $ and $ (\B,\LB) $,
$\hor_p$ is a submanifold of $T_p\M$ at each $p\in\M$.  Moreover, $\hor=\cup_{p\in\M} \hor_p$ is also a submanifold of $T\M$. 
\end{lemma}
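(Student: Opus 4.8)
The plan is to realise $\hor_p$, and then $\hor$, as the preimage of a regular value of a smooth map built from the fundamental tensor, and to apply the regular value theorem.

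I will first treat the pointwise statement. Fix $p\in\M$ and a basis $w_1,\dots,w_k$ of $\ver_p$, where $k=\dim\M-\dim\B$, and define the smooth map $\Phi\colon\Ad_p\to\R^k$ by $\Phi(v)=(g_v(v,w_1),\dots,g_v(v,w_k))$, so that $\hor_p=\Phi^{-1}(0)$ by the very definition of horizontal vector (if $\hor_p=\emptyset$ there is nothing to prove). The key step is to compute $\dif\Phi_v$ at a point $v\in\hor_p$ on a test vector $u\in T_v\Ad_p=T_p\M$: differentiating $t\mapsto g_{v+tu}(v+tu,w_i)$ at $t=0$, the contribution coming from the dependence of $g_{v+tu}$ on its subscript equals $2C_v(u,v,w_i)$ (since the fiber derivative of $g$ is $2C$), and this vanishes by the homogeneity identity $C_v(v,\cdot,\cdot)=0$; hence
\[
\dif\Phi_v(u)=\bigl(g_v(u,w_1),\dots,g_v(u,w_k)\bigr).
\]
Because $g_v$ is non-degenerate, the covectors $g_v(w_1,\cdot),\dots,g_v(w_k,\cdot)$ are linearly independent in $T_p^*\M$, so this linear map is onto $\R^k$; thus $0$ is a regular value of $\Phi$ and $\hor_p=\Phi^{-1}(0)$ is a submanifold of the open subset $\Ad_p\subset T_p\M$, of dimension $\dim\B$.

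For the global statement I will run the same argument with the base point varying. Since $\sigma$ is a submersion, $\ver=\ker\dif\sigma$ is a smooth distribution of constant rank $k$, so every $p_0\in\M$ has an open neighborhood $U$ carrying smooth vector fields $W_1,\dots,W_k\in\mathfrak X(U)$ spanning $\ver$ at each point of $U$. Define $\Psi\colon\Ad\cap\pi^{-1}(U)\to\R^k$ by $\Psi(v)=(g_v(v,W_1|_{\pi(v)}),\dots,g_v(v,W_k|_{\pi(v)}))$; it is smooth and $\hor\cap\pi^{-1}(U)=\Psi^{-1}(0)$. To check that $0$ is a regular value, it suffices to feed $\dif\Psi_v$ the directions $u\in T_p\M\subset T_v(T\M)$ tangent to the fibre of $\pi\colon T\M\to\M$ over $p=\pi(v)$: along such directions the base point, hence each $W_i|_{\pi(v)}$, is constant, so the computation above applies verbatim and gives $\dif\Psi_v(u)=(g_v(u,W_1|_p),\dots,g_v(u,W_k|_p))$, already surjective onto $\R^k$. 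Therefore $\hor\cap\pi^{-1}(U)=\Psi^{-1}(0)$ is a submanifold of the open set $\Ad\cap\pi^{-1}(U)\subset T\M$; since the sets $\pi^{-1}(U)$ cover $T\M$ and being a submanifold is a local property, $\hor$ is a submanifold of $T\M$.

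I expect the only genuine point of care to be the differential computation: one must notice that the contribution of the subscript of $g_{v+tu}$ to $\dif\Phi_v(u)$ is $2C_v(u,v,w_i)$ and is annihilated by $C_v(v,\cdot,\cdot)=0$ — without this cancellation there would be no reason for $\dif\Phi_v$ to be surjective, nor for $\hor_p$ to be smooth at all. The remaining ingredients (smoothness of $\Phi$ and $\Psi$, existence of the local vertical frame $W_1,\dots,W_k$, and the locality of the submanifold property) are routine.
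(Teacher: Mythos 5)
Your proof is correct and follows essentially the same route as the paper: both realise $\hor_p$ (and, locally, $\hor$) as the zero set of $v\mapsto\bigl(g_v(v,w_1),\dots,g_v(v,w_k)\bigr)$, compute the differential to be $u\mapsto\bigl(g_v(u,w_i)\bigr)_i$ thanks to the Cartan-tensor cancellation $C_v(v,\cdot,\cdot)=0$, and conclude by non-degeneracy of $g_v$ (the paper phrases this via constancy of the kernel dimension, you via linear independence of the covectors $g_v(w_i,\cdot)$, which is the same fact), with the global statement handled exactly as you do, by a local vertical frame from an adapted chart.
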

\begin{proof}
Let $e_1,\ldots,e_r$ be a basis of $\ver_p$ and define the map $f:\Ad_p\rightarrow \R^r$, given by 
$f(v)=(g_v(v,e_1),\ldots,g_v(v,e_r))$. It turns out that $\dif f_v(u)=(g_v(u,e_1),\ldots,g_v(u,e_r))$, and then, the kernel of $\dif f_v$ is given by the $g_v$-orthogonal vectors to the fiber $\ver_p$. As $g_v$ is a scalar product, the dimension of ${\rm ker}(\dif f_v)$ is constantly equal to $n-r$, being $n=\dim\M$ (see \cite[Lemma 2.22]{1983ONeill}). This implies that $f$ is a submersion, and $\hor_p=f^{-1}(0)$ a smooth submanifold.  To prove that $\hor$ is a submanifold of $T\M$, consider an adapted chart to the submersion to obtain a frame $E_1,\ldots, E_r$ of vertical vector fields in a neighborhood $U$of $p$. Then define $f: \Ad\cap TU\rightarrow \R$ as above using now the frame $E_1,\ldots, E_r$ instead of $e_1,\ldots,e_r$  and follow the same steps as before. 
\end{proof}
\begin{definition}
A submersion $\sigma \colon (\M,L) \to ( \B,\LB) $ between pseudo-Finsler manifolds is called pseudo-Finsler if  both of the following conditions are satisfied:
\begin{enumerate}[(i)]
\item the submersion fibers are non-degenerate pseudo-Finsler submanifolds of the total space $(\M,L)$,  namely, the restriction of $L$ to the tangent space to the fibers provides a pseudo-Finsler metric on the fibers, 
\item $\dif\sigma$ preserves the length of horizontal vectors, namely, $L(v)=\LB(\dif\sigma(v))$ holds for each horizontal $v$.
\end{enumerate}
\end{definition}
Let us see that a pseudo-Finsler submersion induces a pseudo-Riemannian submersion at each tangent space. In the following, we will use the notation $\tilde v=\dif\sigma(v)$,   $\tilde x=\dif\sigma(x),\ldots$ and will use a tilde to denote the elements associated with $\tilde L$, namely, $\tilde g$ denotes its fundamental tensor, $\tilde C$ its Cartan tensor and so on. Moreover, we will denote by $\Sigma$, the indicatrix of $L$, namely, the admissible vectors $v\in \Ad$ with $L(v)=1$. The indicatrix of $\LB$ will be denoted by $\tilde\Sigma$. 
\begin{lemma}\label{Lemma1}
For each horizontal $ v $, $ \dif\sigma\vert_p:T_p\M\to T_{\sigma(p)}\B $ is a pseudo-Riemannian submersion from $ (T_p\M,g_v)$ to $(T_{\sigma(p)}\B,\tilde g_{\tilde v})$, namely,
\begin{equation}\label{gv1}
g_v(x,y) = \tilde g_{\tilde v}(\tilde x,\tilde y)
\end{equation}
for all $x,y$ $g_v$-orthogonal to the vertical vectors at $T_p\M$. In particular,
\begin{equation}\label{gv2}
g_v(x,e) = \tilde g_{\tilde v}(\tilde x,\tilde e)
\end{equation}
for every $e\in T_p\M$ with $\tilde{e} = \dif\sigma(e)$.
\end{lemma}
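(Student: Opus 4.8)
The plan is to differentiate the defining identity $L=\LB\circ\dif\sigma$ along curves that stay inside the horizontal submanifold $\hor_p$, whose existence is guaranteed by Lemma \ref{horsub}. The first step is to identify $T_v\hor_p$ for horizontal $v$. Using the description $\hor_p=f^{-1}(0)$ with $f(u)=(g_u(u,e_1),\dots,g_u(u,e_r))$ from the proof of Lemma \ref{horsub}, and the fact that $C_v(v,\cdot,\cdot)=0$ by homogeneity, one computes $\dif f_v(u)=(g_v(u,e_1),\dots,g_v(u,e_r))$, so that
\[
T_v\hor_p=\{x\in T_p\M:\ g_v(x,w)=0\ \ \forall w\in\ver_p\}=:\hor_p^v,
\]
the $g_v$-orthogonal complement of $\ver_p$. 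Since the fibers are nondegenerate, $g_v$ restricts nondegenerately to $\ver_p$ (this is also part of the assertion that $\dif\sigma|_p$ is a pseudo-Riemannian submersion in O'Neill's sense), so one has the $g_v$-orthogonal direct sum $T_p\M=\hor_p^v\oplus\ver_p$, and $\dif\sigma|_p$ restricts to a linear isomorphism $\hor_p^v\to T_{\sigma(p)}\B$. Because $\hor_p^v$ is a linear subspace and, for fixed $v$ (resp. $\tilde v$), $g_v$ (resp. $\tilde g_{\tilde v}$) is a genuine symmetric bilinear form, \eqref{gv1} will follow by polarization once we know $g_v(x,x)=\tilde g_{\tilde v}(\tilde x,\tilde x)$ for all $x\in\hor_p^v$.

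To prove this, fix $x\in\hor_p^v=T_v\hor_p$ and a smooth curve $c\colon(-\varepsilon,\varepsilon)\to\hor_p$ with $c(0)=v$ and $c'(0)=x$; by condition (ii) in the definition of a pseudo-Finsler submersion, $L(c(t))=\LB(\dif\sigma(c(t)))$ for all $t$, and since $\dif\sigma|_p$ is linear the image curve has velocity $\dif\sigma(x)=\tilde x$ and acceleration $\dif\sigma(c''(0))$ at $t=0$. Differentiating once at $t=0$ and using Euler's identity $\dif L_v=2g_v(v,\cdot)$ together with its analogue for $\LB$, one obtains $g_v(v,x)=\tilde g_{\tilde v}(\tilde v,\tilde x)$ for every $x\in\hor_p^v$. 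Both sides vanish when the second argument is vertical ($v$ is horizontal, and $\dif\sigma$ kills $\ver_p$), so the splitting of the previous paragraph upgrades this to $g_v(v,w)=\tilde g_{\tilde v}(\tilde v,\dif\sigma(w))$ for \emph{every} $w\in T_p\M$. Differentiating $L(c(t))=\LB(\dif\sigma(c(t)))$ twice at $t=0$, and using that the Hessian of $L$ at $v$ (resp. of $\LB$ at $\tilde v$) is $2g_v$ (resp. $2\tilde g_{\tilde v}$), gives
\[
2g_v(x,x)+2g_v(v,c''(0))=2\tilde g_{\tilde v}(\tilde x,\tilde x)+2\tilde g_{\tilde v}(\tilde v,\dif\sigma(c''(0))),
\]
and the two acceleration terms cancel by the identity just established (applied to $w=c''(0)$). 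Hence $g_v(x,x)=\tilde g_{\tilde v}(\tilde x,\tilde x)$ on $\hor_p^v$, and polarizing over the linear subspace $\hor_p^v$ yields \eqref{gv1}. Finally \eqref{gv2} follows by decomposing $e=e^{\hor}+e^{\ver}$ with $e^{\hor}\in\hor_p^v$, $e^{\ver}\in\ver_p$: then $g_v(x,e)=g_v(x,e^{\hor})$, $\tilde e=\dif\sigma(e^{\hor})$, and one applies \eqref{gv1} to $x$ and $e^{\hor}$.

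The two differentiations and the polarization are routine. The step that needs care — and what I expect to be the main obstacle — is the handling of the acceleration (correction) terms: one must know in advance that $g_v(v,\cdot)$ depends on its argument only through $\dif\sigma$, and this rests on the $g_v$-orthogonal decomposition $T_p\M=\hor_p^v\oplus\ver_p$ for horizontal $v$, equivalently on the nondegeneracy of $g_v$ on $\ver_p$ (equivalently, $\hor_p^v\cap\ver_p=0$, equivalently $\dif\sigma|_p\colon\hor_p^v\to T_{\sigma(p)}\B$ is an isomorphism). Deriving this from the hypothesis that the submersion fibers are nondegenerate pseudo-Finsler submanifolds is the technical crux; once it is available, the rest is a short computation along curves in $\hor_p$.
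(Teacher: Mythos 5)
Your overall strategy (differentiate $L=\LB\circ\dif\sigma$ along curves in $\hor_p$, then polarize the fixed bilinear forms $g_v$ and $\tilde g_{\tilde v}$ over the linear subspace $T_v\hor_p$) is coherent, and the identification of $T_v\hor_p$ with the $g_v$-orthogonal complement $\hor_p^v$ of $\ver_p$, the two differentiations and the polarization are all correct. The genuine gap is exactly the point you defer at the end: the argument stands on the splitting $T_p\M=\hor_p^v\oplus\ver_p$ for a \emph{horizontal} $v$, i.e.\ on the non-degeneracy of $g_v\vert_{\ver_p}$. This is not a hypothesis of the lemma, and it does not follow from item (i) of the definition of pseudo-Finsler submersion, which only controls $g_w\vert_{\ver_p}$ for \emph{vertical} admissible $w$; in the genuinely pseudo-Finsler setting it can fail, and the paper only imposes it later (Section \ref{fundTA}), in part by shrinking $\Ad$, so it is not available here. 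Without it the cancellation of the acceleration terms collapses: you only know $g_v(v,\cdot)=\tilde g_{\tilde v}(\tilde v,\dif\sigma(\cdot))$ on $\hor_p^v+\ver_p$, whereas differentiating the constraint $g_{c(t)}(c'(t),e_i)=0$ along your curve gives $g_v(c''(0),e_i)=-2C_v(x,x,e_i)$, which in general places $c''(0)$ outside $\hor_p^v+\ver_p$ whenever that sum is a proper subspace, and no re-choice of the curve helps, since all admissible accelerations differ by elements of $\hor_p^v$. The derivation of \eqref{gv2} uses the same splitting. So, as written, the proposal proves the lemma only under that additional non-degeneracy assumption (automatic in the positive definite case, where it is a clean and rather elementary alternative proof), not in the stated pseudo-Finsler generality; note also that the ``namely'' in the statement limits the claim to \eqref{gv1}--\eqref{gv2}, so your parenthetical remark that the non-degeneracy is ``part of the assertion'' cannot be used to supply it.

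The paper's proof is arranged precisely to avoid this input. It reduces by homogeneity and continuity to $L(v)=\pm1$, identifies $g_v$ restricted to $T_v\Sigma_p$ (resp.\ $\tilde g_{\tilde v}$ restricted to $T_{\tilde v}\tilde\Sigma_{\sigma(p)}$) with the second fundamental form of the indicatrix with transversal vector $-v$ (resp.\ $-\tilde v$), and compares the two through the hypersurface $\C_p=\dif\sigma\vert_p^{-1}(\tilde\Sigma_{\sigma(p)})$, which is invariant under vertical translations, so that its second fundamental forms with transversal vectors $-v$ and $-\tilde v$ coincide along the fiber over $\tilde v$; no $g_v$-orthogonal decomposition of $T_p\M$, and no non-degeneracy of $g_v$ on $\ver_p$, enters. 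To complete your route in the stated generality you would need either an independent proof of the splitting for horizontal $v$ (which the hypotheses do not provide) or another way to control the difference $g_v(v,c''(0))-\tilde g_{\tilde v}(\tilde v,\dif\sigma(c''(0)))$.
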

\begin{proof}
As lightlike vectors are in the closure of $\{v\in\Ad: L(v)\neq0\}$, it suffices to prove \eqref{gv1} under the condition $L(v)\neq0$ and extend it by continuity to the lightcone. By $0$-homogeneity of $ g $, we can furthermore assume that $L(v)=\pm 1$. By definition of a pseudo-Finsler submersion, $g_v(v,v)=\tilde g_{\tilde v}(\tilde v,\tilde v)$ for every horizontal $v$, and the vectors tangent to the indicatrix $\Sigma$ are therefore precisely those $g_v$-orthogonal to $v$. As a consequence, it suffices to prove that for $x,y\in T_v\Sigma$
\begin{equation}\label{igualsenza}
g_v(x,y) = \tilde{g}_{\tilde{v}}(\tilde{x},\tilde{y})
\,.
\end{equation}
Since the result is local, considering an adapted chart, we can assume that the submersion is a canonical projection $\sigma:\R^n\rightarrow \R^m$. Consider now $\Lambda_p=\Sigma_p\cap\hor_p$, submanifold of $\Sigma_p$ by Lemma \ref{horsub} as it is the intersection of two transversal smooth submanifolds. Observe that $\dif \sigma_p(\Lambda_p)\subset \tilde \Sigma_{\sigma(p)}$, and therefore $\Lambda_p\subset\dif\sigma\vert_p^{-1}(\tilde\Sigma_{\sigma(p)})$. Moreover, as $\dif\sigma_p$ is a submersion whose fibers are affines subspaces parallel to $\ver_p$, the second fundamental form with respect to the trivial connection (the metric connection of any Euclidean metric on $T_p\M$) of $\C_p=\dif\sigma\vert_p^{-1}(\tilde\Sigma_{\sigma(p)})$ has in its kernel all the vertical vectors. Then,  recalling  \cite[Eq. (2.5)]{JavSan14}) for the relationship between the fundamental tensor and the second fundamental form of $\Sigma$,  \eqref{igualsenza} follows from the following observations:
\begin{enumerate}[(i)]
\item the second fundamental form of $\Sigma_p$ at $v$ having as transversal vector the vector $-v$ coincides with the restriction of $g_v$ to $T_v\Sigma_p\times T_v\Sigma_p$,
\item similarly the second fundamental form of $\tilde\Sigma_{\sigma(p)}$ at $\tilde v$ having as transversal vector $-\tilde{v}$ coincides with the restriction of $\tilde g_{\tilde v}$ to $T_{\tilde v}\tilde\Sigma_{\sigma(p)}\times T_{\tilde v}\tilde\Sigma_{\sigma(p)}$,
\item $\tilde \Sigma_{\sigma(p)}$ can be identified with $\C_p\cap\R^m$ (or, to be precise, $\C_p\cap(\lbrace 0 \rbrace^{n-m}\times\R^m)$),
\item $\C_p$ is invariant under vertical translations,
\item $v-\tilde v$ is tangent to $\C_p$ (it is indeed vertical), and thus the two second fundamental forms of $\C_p$ along the fiber $(\dif\sigma)_p^{-1}(\tilde v)$ having as respective transversal vectors $-v$ and $-\tilde v$ coincide.
\end{enumerate} 
\end{proof}
Recall that the Legendre map $\leg_p\colon T_p\M\to T_p\M^*$ of a pseudo-Finsler metric is defined on each admissible $v$ as
$\leg(v)=g_v(v,\cdot)$. If the pseudo-Finsler metric is defined in the whole tangent space, then $\leg$ is bijective whenever $\dim\M\geq 3$ (see \cite{Min15,RuSu15}). 
\begin{lemma}\label{legendrelemma}
If the Legendre map $\leg$ at $p\in\M$ is injective (bijective), then each  $\tilde v\in \tilde\Ad\cap T_{\sigma(p)}\B$  admits at most one (exactly one) horizontal vector $v\in T_p\M$ satisfying $\dif\sigma(v)=\tilde v$.   
\end{lemma}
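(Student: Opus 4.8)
The plan is to translate horizontality of a vector into a statement about its Legendre image and then move the whole problem down to the base via Lemma~\ref{Lemma1}. Write $\tilde\leg$ for the Legendre map of $\LB$. The starting observation is that, for $v\in\Ad_p$, being horizontal is equivalent to $\leg_p(v)=g_v(v,\cdot)$ vanishing on $\ver_p=\ker(\dif\sigma\vert_p)$; since $\sigma$ is a submersion, the pullback $(\dif\sigma\vert_p)^{*}\colon T_{\sigma(p)}\B^{*}\to T_p\M^{*}$ is injective with image exactly the set of covectors vanishing on $\ver_p$, so horizontality of $v$ amounts to $\leg_p(v)=(\dif\sigma\vert_p)^{*}\beta$ for a (then unique) $\beta\in T_{\sigma(p)}\B^{*}$. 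Moreover, \eqref{gv2} applied with $x=v$ identifies $\beta$: for every horizontal $v$ and all $e\in T_p\M$ one has $g_v(v,e)=\tilde g_{\tilde v}(\tilde v,\dif\sigma(e))$, that is,
\[
\leg_p(v)=(\dif\sigma\vert_p)^{*}\bigl(\tilde\leg_{\sigma(p)}(\tilde v)\bigr),\qquad \tilde v=\dif\sigma(v).
\]
This identity is what the whole proof rests on.

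For the ``at most one'' statement I would take two horizontal $v,v'\in T_p\M$ with $\dif\sigma(v)=\dif\sigma(v')=\tilde v$; the displayed identity, used for $v$ and for $v'$, gives $\leg_p(v)=(\dif\sigma\vert_p)^{*}(\tilde\leg_{\sigma(p)}(\tilde v))=\leg_p(v')$, so if $\leg_p$ is injective then $v=v'$.

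For the ``exactly one'' statement, assume $\leg_p$ bijective and fix $\tilde v\in\tilde\Ad\cap T_{\sigma(p)}\B$. Put $\alpha:=(\dif\sigma\vert_p)^{*}(\tilde\leg_{\sigma(p)}(\tilde v))\in T_p\M^{*}$; it is nonzero, because $\tilde g_{\tilde v}$ is non-degenerate (hence $\tilde\leg_{\sigma(p)}(\tilde v)\neq 0$) and $(\dif\sigma\vert_p)^{*}$ is injective. By bijectivity there is $v\in\Ad_p$ with $\leg_p(v)=\alpha$, and since $\alpha$ vanishes on $\ver_p$ this $v$ is horizontal. Applying the displayed identity to $v$ gives $(\dif\sigma\vert_p)^{*}(\tilde\leg_{\sigma(p)}(\dif\sigma(v)))=\alpha=(\dif\sigma\vert_p)^{*}(\tilde\leg_{\sigma(p)}(\tilde v))$; cancelling the injective $(\dif\sigma\vert_p)^{*}$ shows that $\dif\sigma(v)$ and $\tilde v$ have the same Legendre image for $\LB$, hence $\dif\sigma(v)=\tilde v$, as required.

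The step I expect to be the main obstacle is precisely this last cancellation: to conclude $\dif\sigma(v)=\tilde v$ from the equality of their $\tilde g$-musical covectors one needs the Legendre map of $\LB$ to be injective at $\sigma(p)$. This holds automatically when $\LB$ is a (positive definite) Finsler metric, which is the setting of the horizontal-lift statement announced in the introduction; in the general pseudo-Finsler case it should be noted explicitly (bijectivity of $\leg_p$ together with the displayed identity does force $\tilde\leg_{\sigma(p)}$ to map $\tilde\Ad\cap T_{\sigma(p)}\B$ onto $T_{\sigma(p)}\B^{*}\setminus\{0\}$, but only injectivity closes the argument). Everything else is routine bookkeeping with the injective pullback $(\dif\sigma\vert_p)^{*}$ and the two invocations of Lemma~\ref{Lemma1}, one in each direction.
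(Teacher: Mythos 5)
Your argument is essentially the paper's own: uniqueness by using Lemma \ref{Lemma1} (via \eqref{gv2} with $x=v$) to see that any horizontal lift of $\tilde v$ has Legendre image $(\dif\sigma\vert_p)^{*}(\tilde\leg_{\sigma(p)}(\tilde v))$, and existence by applying $\leg_p^{-1}$ to exactly that covector, which is the paper's $\omega$. Your closing caveat is a fair reading of the one step the paper compresses into ``it is straightforward to check'': verifying $\dif\sigma(v)=\tilde v$ does reduce, as you say, to cancelling $\tilde\leg_{\sigma(p)}$, which needs injectivity of the Legendre map of $\LB$ at $\sigma(p)$ --- automatic in the positive definite case and in the setting actually invoked later (both metrics defined on the slit tangent bundle, as announced in the introduction and used in Section \ref{totallygeo}), but not literally contained in the lemma's hypothesis on $\leg_p$ alone. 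So the proposal is correct in substance and follows the same route, with that implicit assumption made explicit rather than omitted.
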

\begin{proof}
For any two horizontal vectors $v_1,v_2\in T_p\M$ projecting onto $\tilde v$, let us prove $\leg_p(v_1)=\leg_p(v_2)$. Observe that $\leg_p(v_1)$ and $\leg_p(v_2)$ have the same kernel $\ver_p$, such that each $g_{v_1}$-horizontal $x_1$ admits a vertical vector $w$ such that $x_2=x_1+w$ is $g_{v_2}$-horizontal. Denoting by $\tilde{x}$ their shared projection, by Lemma \ref{Lemma1},
\begin{equation*}
\leg_p(v_1)(x_1)=g_{v_1}(v_1,x_1)=\tilde g_{\tilde v}(\tilde v,\tilde x)=g_{v_2}(v_2,x_2)=g_{v_2}(v_2,x_1)=\leg_p(v_2)(x_1)
\,,
\end{equation*}
or specifically $\leg_p(v_1)=\leg_p(v_2)$, which implies that $v_1=v_2$ by injectivity of $\leg_p$. Let us now prove the existence of the horizontal lift when $\leg_p$ is invertible.   Fixing an admissible vector $\tilde v$ at $\sigma(p)$, and a co-vector $\omega$ at $p$ with kernel $\ver_p$ that acts on a transversal subspace as $\tilde g_{\tilde v}(\tilde v,\cdot)$ acts on its projection, it is straightforward to check that $v=(\leg_p)^{-1}(\omega)$ is horizontal and projects onto $\tilde v$. 
\end{proof}
\begin{remark}
 Throughout  this paper, we will need to consider only a local horizontal extension $V$ of a horizontal vector $v$. As the  Legendre  transform is locally invertible, the existence of these local horizontal extensions is always guaranteed for every horizontal $v$ by  a local application of  Lemma \ref{legendrelemma}.   Indeed, consider an extension $\tilde V$ of the projection $\tilde v$ and a neighborhood of $v$ in $\Ad$ where the Legendre transform is injective. Then the one-form constructed in the proof of the last lemma is on the image of the Legendre transform for a small enough neighborhood of $g_v(v,\cdot)$ and its inverse by $\leg$ gives a horizontal vector field which projects onto $\tilde V$. 
\end{remark}
\begin{corollary}
For each horizontal $v$, $g_v$-horizontal $x$ and $y$ and arbitrary $e$ (at $p$),
\begin{equation}
C_v(x,y,e) = \tilde C_{\tilde v}(\tilde x,\tilde y,\tilde e)
- \tfrac{1}{2}g_v(\mathrm{I\!I}^\hor_v(x,y),e)
\,,
\end{equation}
where $\mathrm{I\!I}^\hor_v$ is the second fundamental form of $\hor_p$ as a submanifold of $(T_p \M,g_v) $.
\end{corollary}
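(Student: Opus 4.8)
The plan is to use that $C_v$ is symmetric, hence linear in its last slot, and to split $e$ along the $g_v$-orthogonal decomposition $T_p\M=T_v\hor_p\oplus\ver_p$. Here I rely on two facts read off from the proof of Lemma \ref{horsub}: first, $T_v\hor_p$ is exactly the subspace of $g_v$-horizontal vectors at $p$; second, since $\hor_p=\{u\in\Ad_p:g_u(u,e_i)=0,\ i=1,\dots,r\}$ for a basis $e_1,\dots,e_r$ of $\ver_p$, its $g_v$-orthogonal complement — the normal space of $\hor_p\subset(T_p\M,g_v)$ at $v$ — is $\ver_p$, the sum being direct because $g_v\vert_{\ver_p}$ is non-degenerate for horizontal $v$ (cf.\ Lemma \ref{Lemma1}). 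Consequently $\mathrm{I\!I}^\hor_v(x,y)\in\ver_p$, so writing $e=e^\top+e^\perp$ with $e^\top$ $g_v$-horizontal and $e^\perp$ vertical, one has $\tilde e=\dif\sigma(e^\top)$ and $g_v(\mathrm{I\!I}^\hor_v(x,y),e)=g_v(\mathrm{I\!I}^\hor_v(x,y),e^\perp)$. It therefore suffices to prove the formula for $e$ $g_v$-horizontal, where it reads $C_v(x,y,e)=\tilde C_{\tilde v}(\tilde x,\tilde y,\tilde e)$, and for $e$ vertical, where it reads $C_v(x,y,e)=-\tfrac12 g_v(\mathrm{I\!I}^\hor_v(x,y),e)$, and then add.

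For the horizontal case I would pick a curve $t\mapsto v(t)$ in $\hor_p$ with $v(0)=v$ and $\dot v(0)=e$ (possible since $e\in T_v\hor_p$), and lift $\tilde x$ and $\tilde y$ to smooth curves $x(t),y(t)$ of $g_{v(t)}$-horizontal vectors through $x$ and $y$ with $\dif\sigma(x(t))=\tilde x$ and $\dif\sigma(y(t))=\tilde y$ \emph{constant}; these lifts exist because, $v(t)$ being horizontal, $\dif\sigma$ restricts to an isomorphism from the $g_{v(t)}$-horizontal subspace onto $T_{\sigma(p)}\B$. Then Lemma \ref{Lemma1}, applied along $v(t)$, gives
\begin{equation*}
g_{v(t)}(x(t),y(t))=\tilde g_{\dif\sigma(v(t))}(\tilde x,\tilde y).
\end{equation*}
Differentiating at $t=0$, and recalling that the fiber derivative of $g$ is $2C$ (and of $\tilde g$ is $2\tilde C$), the right-hand side gives $2\tilde C_{\tilde v}(\tilde e,\tilde x,\tilde y)$, while on the left the terms coming from $\dot x(0)$ and $\dot y(0)$ vanish, these vectors being vertical (their $\dif\sigma$-images are constant) and hence $g_v$-orthogonal to the $g_v$-horizontal vectors $x$ and $y$; what remains is $2C_v(e,x,y)$, which is the claimed identity.

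For the vertical case I would compute $\mathrm{I\!I}^\hor_v$ from the defining equations of $\hor_p$. The differential at $u$ of $u\mapsto g_u(u,e_i)$ is $w\mapsto g_u(w,e_i)$ (using $C_u(u,\cdot,\cdot)=0$), so any vector field $Y$ on $\hor_p$ with $Y_v=y$ satisfies $g_u(Y_u,e_i)\equiv0$ along $\hor_p$. Viewing $(T_p\M,g_v)$ as a flat pseudo-Riemannian space with trivial Levi-Civita connection $D$, the second fundamental form $\mathrm{I\!I}^\hor_v(x,y)$ is the $\ver_p$-component of $D_xY$, and since the tangential component of $D_xY$ lies in the $g_v$-horizontal subspace it is $g_v$-orthogonal to $e_i$, so $g_v(\mathrm{I\!I}^\hor_v(x,y),e_i)=g_v(D_xY,e_i)$. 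Differentiating the identity $g_{u(t)}(Y_{u(t)},e_i)=0$ along a curve $u(t)$ in $\hor_p$ with $\dot u(0)=x$ produces exactly one Cartan term, $0=2C_v(x,y,e_i)+g_v(D_xY,e_i)$, whence $g_v(\mathrm{I\!I}^\hor_v(x,y),e_i)=-2C_v(x,y,e_i)$; expanding a vertical $e$ in the basis $e_1,\dots,e_r$ then gives $g_v(\mathrm{I\!I}^\hor_v(x,y),e)=-2C_v(x,y,e)$. Adding the two cases and using the reductions of the first paragraph yields the formula for arbitrary $e$.

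I expect the vertical case to be the main obstacle. Unlike in the Riemannian setting, the fundamental tensor genuinely varies along the fibre, so the $g_v$-horizontal vectors $x,y$ are \emph{not} $g_{v+te}$-horizontal, and the point of the corollary is precisely that this discrepancy is encoded by the second fundamental form of the \emph{nonlinear} submanifold $\hor_p$. The care lies in the bookkeeping: verifying that $T_v\hor_p$ coincides with the $g_v$-horizontal subspace — so that the normal directions are exactly the vertical ones — and that differentiating the constraint $g_u(u,e_i)=0$ along $\hor_p$ contributes a single clean Cartan term; the horizontal case, by contrast, is a routine application of Lemma \ref{Lemma1}.
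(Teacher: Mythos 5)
Your argument is correct, and it shares the paper's two key ingredients: the identification of $T_v\hor_p$ with the $g_v$-horizontal subspace, and differentiation of the fiberwise isometry of Lemma \ref{Lemma1} along curves in $\hor_p$, with $\mathrm{I\!I}^\hor_v$ read off as the vertical part of the flat-connection derivative of a field tangent to $\hor_p$. The organization, however, is genuinely different. The paper performs a \emph{single} differentiation, in the direction $x$, of the identity $g_\gamma(Y,Z+w)=\tilde g_{\tilde\gamma}(\tilde Y,\tilde Z)$ along a curve $\gamma\subset\hor_p$ with $\dot\gamma(0)=x$, where $Y,Z$ are tangent to $\hor_p$ with $Y(0)=y$, $Z(0)=e-w$ and $w$ is the (constant) vertical part of $e$; the Cartan term and the second-fundamental-form term then appear simultaneously, the latter as $g_v(\dot Y,w)$. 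You instead split $e$ by linearity and differentiate twice: once in the direction of the horizontal part of $e$, with lifts of constant projection, which gives $C_v(x,y,e^\bot_v)=\tilde C_{\tilde v}(\tilde x,\tilde y,\tilde e)$ with all derivative terms killed by orthogonality alone; and once in the direction $x$, differentiating the tangency constraint $g_u(Y_u,e_i)=0$, which gives the explicit identity $g_v(\mathrm{I\!I}^\hor_v(x,y),w)=-2C_v(x,y,w)$ for vertical $w$, equivalently $\mathrm{I\!I}^\hor_v(x,y)=-2C^\sharp_v(x,y)^\top_v$. What your route buys is precisely this clean closed formula for the second fundamental form of $\hor_p$ in terms of the Cartan tensor (which dovetails with the fiber derivatives in Proposition \ref{dotPartialTopProposition}), at the price of building the constant-projection lifts $x(t),y(t)$ and of invoking the total symmetry of $C$ and $\tilde C$ to restore the slot order; the paper's version is shorter, needing only one curve and one differentiation. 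Both proofs lean on the same unproved-but-routine smooth-selection fact (existence of smooth $g_{\gamma(t)}$-horizontal fields along a curve in $\hor_p$, e.g.\ via the Gram--Schmidt argument of Lemma \ref{XT}), and on the $g_v$-nondegeneracy of $\ver_p$ for horizontal $v$, which as you note follows from \eqref{gv2}, so there is no gap relative to the paper's own standard of detail.
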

\begin{proof}
Observe that the space tangent to $\hor_p$ at $v$ is the set of $g_v$-horizontal vectors (see the proof of Lemma \ref{horsub} and observe that this space is the kernel of  $\dif_vf$).  Moreover, consider any smooth curve $\gamma$ of $\hor_p$ with $\gamma(0)=v$ and $\dot\gamma(0)=x$, and vector fields $Y$ and $Z$ along $\gamma$ tangent to $\hor_p$ with $Y(0)=y$ and $Z(0)=e-w$ for some (constant) vertical $w$, such that $Y$ and $Z$ are $g_\gamma$-orthogonal to $\ver_p$ at each point of $\gamma$. By \eqref{gv1} we have
\begin{equation*}
g_\gamma(Y,Z+w) = \tilde g_{\tilde\gamma}(\tilde Y,\tilde Z)
\,.
\end{equation*}
By differentiating both sides,
\begin{equation*}
2C_\gamma(Y,Z+w,\dot\gamma) + g_\gamma(\dot Y,Z+w) + g_\gamma(Y,\dot Z)
=
2\tilde C_{\tilde\gamma}(\tilde Y,\tilde Z,\dot{\tilde\gamma}) + \tilde g_{\tilde\gamma}(\dot{\tilde Y},\tilde Z) + \tilde g_{\tilde\gamma}(\tilde Y,\dot{\tilde Z}).
\end{equation*}
By \eqref{gv1}, and as $\dif\sigma(\dot X)=\dot{\tilde X}$ and $\dif\sigma(\dot Y)=\dot{\tilde Y}$ by linearity of $\dif\sigma$, this cancels to
\begin{equation*}
2C_v(y,z+w,x) + g_v(\dot Y,w) = 2\tilde C_{\tilde v}(\tilde y,\tilde z,\tilde x)
\,.
\end{equation*}
To conclude, the second fundamental form $\mathrm{I\!I}^\hor_v(x,y)$ is the vertical part of the covariant differentiation of $Y$ along $x$.
\end{proof}

\section{The fundamental tensors T and A}\label{fundTA}

To introduce the fundamental tensors of a submersion we will need first to define the vertical and horizontal parts of each vector in $T\M$.  In the pseudo-Finsler case, we will need a further condition to do this decomposition, namely, the vertical space must be non-degenerate for all $v\in \Ad$. By the definition of pseudo-Finsler submersion, this will be true for vertical and horizontal vectors, and we can reduce the domain where $L$ is defined to ensure this condition, which does always hold  when $g_v$ is positive definite. As a consequence $\Ad$ could be non-connected.  Then if $e\in T_{\pi(v)}\M$, one has the $g_v$-decomposition
\[e= e^\top_v+e^\bot_v\]
with $e^\top_v$ vertical and $e^\bot_v$ $g_v$-orthogonal to $\ver_p$. In particular, given a vector field $X\in{\mathfrak X}(\M)$, we can define $X^\top$ and $X^\bot$ as the anisotropic vector fields such that the evaluation at $v\in \Ad_p$ is given by $X^\top_v=(X_p)^\top_v$ and $X^\bot_v=(X_p)^\bot_v$, respectively. 
\begin{lemma}\label{XT}
Given a pseudo-Finsler submersion and a vector field $X\in{\mathfrak X}(\M)$, then $X^\top,X^\bot\in {\mathcal T}^1_0(\M)$.
\end{lemma}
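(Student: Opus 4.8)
The statement asserts that, for a fixed vector field $X\in\mathfrak X(\M)$, the assignments $v\mapsto X^\top_v$ and $v\mapsto X^\bot_v$ are smooth sections of the pullback bundle $\pi_\Ad^*(T\M)$, i.e.\ anisotropic vector fields. Since $X^\top_v + X^\bot_v = X_{\pi(v)}$ for every admissible $v$ and $X$ is itself smooth (hence defines a trivially anisotropic vector field by composition with $\pi_\Ad$), it suffices to prove smoothness of just one of the two decompositions, say $X^\top$; then $X^\bot = X\circ\pi_\Ad - X^\top$ is smooth as a difference of smooth sections. So the plan is to exhibit $X^\top$ locally as a smooth combination of the data at hand.

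The plan is to work locally near a point $p_0\in\M$ in an adapted chart for the submersion $\sigma$, which gives a smooth local frame $E_1,\dots,E_r\in\mathfrak X(\Omega)$ of vertical vector fields on a neighborhood $\Omega$ of $p_0$. For $v\in\Ad_p$ with $p\in\Omega$, the vertical part $X^\top_v$ lies in $\ver_p=\mathrm{span}\{E_1|_p,\dots,E_r|_p\}$, so we may write $X^\top_v=\sum_{i=1}^r a^i(v)\,E_i|_p$ for uniquely determined scalars $a^i(v)$. The defining condition is that $X_p - X^\top_v$ be $g_v$-orthogonal to $\ver_p$, i.e.\ $g_v(X_p,E_j|_p) = \sum_{i=1}^r a^i(v)\,g_v(E_i|_p,E_j|_p)$ for each $j=1,\dots,r$. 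Introducing the $r\times r$ matrix $G_{ij}(v):=g_v(E_i|_{\pi(v)},E_j|_{\pi(v)})$ and the vector $b_j(v):=g_v(X_{\pi(v)},E_j|_{\pi(v)})$, this is the linear system $G(v)\,a(v)=b(v)$. The matrix $G(v)$ is invertible precisely because, in the construction of the fundamental tensors, we have reduced $\Ad$ so that $\ver_p$ is $g_v$-nondegenerate for all $v\in\Ad$; hence $G(v)$ is the Gram matrix of a basis with respect to a nondegenerate restriction and $\det G(v)\neq 0$. Therefore $a(v) = G(v)^{-1}b(v)$.

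It remains to observe that $v\mapsto a(v)$ is smooth on $\Ad\cap T\Omega$. This follows because: (i) the fundamental tensor $g$ is a smooth $\Ad$-anisotropic tensor, so $v\mapsto g_v(\cdot,\cdot)$ evaluated on the smooth vector fields $E_i, E_j, X$ gives smooth functions $G_{ij}$ and $b_j$ on $\Ad\cap T\Omega$; (ii) matrix inversion is smooth on the open set of invertible matrices, and $G(v)$ stays invertible throughout; hence $v\mapsto G(v)^{-1}b(v)$ is smooth. Consequently $X^\top(v)=\sum_i a^i(v)E_i|_{\pi(v)}$ is a smooth section of $\pi_\Ad^*(T\M)$ over $T\Omega\cap\Ad$, and since such local sections agree on overlaps (the decomposition is intrinsic, independent of the chosen vertical frame), they patch to a global anisotropic vector field $X^\top\in\mathcal T^1_0(\M_\Ad)$. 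Then $X^\bot=X\circ\pi_\Ad-X^\top\in\mathcal T^1_0(\M_\Ad)$ as well.

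I do not anticipate a serious obstacle here: the only point requiring care is the nondegeneracy of the Gram matrix $G(v)$, but this is exactly the standing hypothesis (built into the reduced domain $\Ad$) that makes the $g_v$-orthogonal decomposition $e=e^\top_v+e^\bot_v$ well defined in the first place, as recalled immediately before Lemma \ref{XT}. Everything else is the smooth dependence of linear algebra (Gram matrix, inversion, matrix–vector product) on the parameter $v$, inherited from smoothness of $g$ as an anisotropic tensor.
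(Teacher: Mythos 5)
Your proof is correct, and it reaches the same goal by a mildly different mechanism than the paper. Both arguments are local: reduce to $X^\top$ (the paper does the same, and recovers $X^\bot$ by subtraction), take an adapted chart giving a smooth vertical frame, and express $X^\top_v$ in that frame with coefficients depending smoothly on $v$ through the fundamental tensor $g$. The difference is how the coefficients are produced. The paper perturbs the coordinate frame so that, for $w$ near $v$, every nested span $e_1,\ldots,e_s$ is $g_w$-nondegenerate, runs Gram--Schmidt to get a $g_w$-orthonormal frame $e_1(w),\ldots,e_n(w)$ adapted to $\ver_p$, and writes $X^\top_w=\sum_i g_w(e_i(w),X)\,e_i(w)/g_w(e_i(w),e_i(w))$. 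You instead solve the linear system $G(v)a(v)=b(v)$ with the Gram matrix $G_{ij}(v)=g_v(E_i,E_j)$ and invert it; invertibility is exactly the standing hypothesis (stated just before the lemma) that $\ver_p$ is $g_v$-nondegenerate for all $v\in\Ad$, and smoothness follows from smoothness of $g$ as an anisotropic tensor together with smoothness of matrix inversion. Your route is slightly more economical: it avoids Gram--Schmidt and hence the auxiliary perturbation/shrinking argument needed to make all the intermediate spans nondegenerate, using only the nondegeneracy of the full vertical subspace, which is the hypothesis that makes the decomposition well defined in the first place. The intrinsic-patching remark at the end is a nice touch the paper leaves implicit.
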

\begin{proof}
It suffices to prove the smoothness of $X^\top$. Consider in a neighborhood of $p\in\M$ coordinates $(x_1,\ldots,x_n)$ adapted to the submersion  and $v\in \Ad_p$.  Some constant coefficient combination of the induced vector fields $\frac{\partial}{\partial x^1},\ldots,\frac{\partial}{\partial x^n}$ produces a non-degenerate reference frame $e_1,\ldots,e_n$ such that the first vectors $e_1,\ldots,e_r$ form a reference frame for the vertical space in a neighborhood $U$ of $p$, 
 and for all $1\leq s\leq n$, $e_1,\ldots,e_s$ spans a $g_w$ non-degenerate subspace for $w$ in a neighborhood of $v$ 
 (this can be done by perturbing the frame of partial vector fields   by reducing the neighborhood if necessary). Under these conditions, we can apply the Gram-Schmidt process to obtain a $g_w$-orthonormal basis $e_1(w),\ldots,e_n(w)$ with a smooth dependence on $w$ in some neighborhood of $v$, such that $e_1(w),\ldots, e_r(w)$ is a basis of the vertical subspace. Locally, $X^\top_w = \sum_{i=1}^r \frac{g_w(e_i(w),X)}{g_w(e_i(w),e_i(w))} e_i(w)$, smooth in $w$.
\end{proof}
Observe that Lemma \ref{XT} can be applied to one single vector $e\in T_p\M$, obtaining a smooth map $e^\top:
\Ad_p\rightarrow T_{\pi(e)}\M$.

The letter $p$ will always denote a point of the manifold $\M$, the letters $s,u,w$ will always denote vertical vectors, namely elements of $\ver_p$, $v$ an admissible vector, namely an element of $\Ad_p$, and $x,y,z$ $ g_v $-horizontal vectors of $T_p\M$. The  capital  letters $S,U,W$ will always denote vertical vector fields, $V$ a locally admissible extension of $v$, and $X,Y,Z$ locally $g_V$-horizontal vector fields, preferably projectable onto some vector fields $\tilde X,\tilde Y,\tilde Z$ of the base manifold in the sense that $\dif\sigma\cdot X=\tilde X\circ\sigma$ and so on.

The covariant derivative $\nabla^v_h E$ of an arbitrary extension $E$ of $e$ along each vector $h$ at $p$ defines the $1$-form $\nabla^v E \colon \mapsto \nabla^v_h E$ on $T_p\M$ which splits as
\begin{multline*}
\nabla^v E
=
(\nabla^v (E^\top+E^\bot))^\top_v+(\nabla^v (E^\top+E^\bot))^\bot_v
\\
=
(\nabla^v E^\top)^\top_v
+
(\nabla^v E^\bot)^\top_v
+
(\nabla^v E^\top)^\bot_v
+
(\nabla^v E^\bot)^\bot_v
\,,
\end{multline*}
whose middle terms are independent of $E$ and define the tensorial expression
\begin{equation*}
\chi^v e
=
(\nabla^v E^\bot)^\top_v
+
(\nabla^v E^\top)^\bot_v
\,.
\end{equation*}
Recall by Lemma \ref{XT} that $ E^\top $ and $ E^\bot $ are anisotropic vector fields. Furthermore, the tensor $\chi$ is well-defined if we substitute $E$ for an anisotropic vector field $\mathcal{X}$, with the definition that $\mathcal{X}^\top \colon v \mapsto (\mathcal{X}_v)^\top_v$ and similarly $\mathcal{X}^\bot \colon v \mapsto (\mathcal{X}_v)^\bot_v$.
\begin{definition}\label{ATDefinition}
 Assume that the vertical space is $g_v$-non-degenerate for all $v\in \Ad$. Then the fundamental anisotropic tensors of a pseudo-Finsler submersion are defined as the 
$(1,2)$-anisotropic tensors $\T$ and $\At$ given by
\begin{equation*}
\T^v_b e = \chi^v_{b^\top_v}e, \quad \At^v_b e = \chi^v_{b^\perp_v}e,
\end{equation*}
for any $v\in\Ad_p$ and $e,b\in T_p\M$. Therefore $\chi=\T+\At$.
\end{definition}
We may check the following properties, which differ slightly from \cite{1966ONeill}. Note our use of the musical isomorphism to define $ C^\sharp $ as the symmetric type $ (1,2) $ tensor
determined for arbitrary vectors $ b $, $ e $ and $ h $ at $ p $ by
\begin{equation*}
g_v(b,C^\sharp_v(e,h))
=
C_v(b,e,h)
\,,
\end{equation*}
and by 
homogeneity, $ C^\sharp_v(v,\cdot) = 0 $. 
\begin{proposition}\label{dotPartialTopProposition}
For each admissible $ v $, vertical $ w $ and $ g_v $-horizontal $ x $ at $ p $,
\begin{equation*}
(\dot\partial w^\top)_v = (\dot\partial w^\bot)_v = 0
\,, \quad
(\dot\partial x^\top)_v = 2C^\sharp_v(x,\cdot)^\top_v
\,, \quad
(\dot\partial x^\bot)_v = -2C^\sharp_v(x,\cdot)^\top_v
\,.
\end{equation*}
That is to say, for an arbitrary vector $e$ at $p$,
\begin{equation}\label{vertder}
(\dot\partial e^\top)_v= 2C^\sharp_v(e^\perp_v,\cdot)^\top_v, \quad (\dot\partial e^\perp)_v= -2C^\sharp_v(e^\perp_v,\cdot)^\top_v.
\end{equation} 
\end{proposition}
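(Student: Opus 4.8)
The starting point is the observation that, although the $g_v$-orthogonal decomposition $e=e^\top_v+e^\bot_v$ depends on $v$, the vector $e$ itself does not; differentiating this identity in the fiber therefore gives $(\dot\partial e^\top)_v=-(\dot\partial e^\bot)_v$ at every admissible $v$, so it suffices to compute one of the two fiber derivatives. Moreover, since for a fixed $v$ the $g_{v'}$-orthogonal projection onto $\ver_p$ is a linear operator on $T_p\M$ for each $v'$, the anisotropic vector field $e^\top$ splits as $(e^\top_v)^\top+(e^\bot_v)^\top$, where $e^\top_v$ is vertical and $e^\bot_v$ is $g_v$-horizontal. Consequently the whole statement reduces to the two special cases $(\dot\partial w^\top)_v=0$ for vertical $w$, and $(\dot\partial x^\top)_v=2C^\sharp_v(x,\cdot)^\top_v$ for $g_v$-horizontal $x$.

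The vertical case is immediate: if $w\in\ver_p$, uniqueness of the decomposition forces $w^\top_{v'}=w$ and $w^\bot_{v'}=0$ for every admissible $v'$, both constant in $v'$, so their fiber derivatives vanish. For the horizontal case, fix a $g_v$-horizontal $x$. Since $x^\top_{v'}\in\ver_p$ for every $v'$, the fiber derivative $(\dot\partial x^\top)_v(h)$ is again vertical; this is the object I will identify, and then $(\dot\partial x^\bot)_v(h)=-(\dot\partial x^\top)_v(h)$. Differentiating the orthogonality relation $g_{v'}(x^\bot_{v'},u)=0$ --- valid for all admissible $v'$ and all vertical $u$ --- in a fiber direction $h$ at $v'=v$, and using that the fiber derivative of $g$ is $2C$ together with $x^\bot_v=x$, I obtain $g_v\big((\dot\partial x^\bot)_v(h),u\big)=-2C_v(h,x,u)$ for every vertical $u$. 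By the full symmetry of the Cartan tensor and the defining relation of $C^\sharp$ one has $C_v(h,x,u)=g_v\big(C^\sharp_v(x,h),u\big)$, hence $g_v\big((\dot\partial x^\top)_v(h),u\big)=2\,g_v\big(C^\sharp_v(x,h),u\big)$ for all vertical $u$. Because the vertical space is $g_v$-non-degenerate (the standing hypothesis of Definition \ref{ATDefinition}) and $(\dot\partial x^\top)_v(h)$ is vertical, this characterises it as the $g_v$-vertical part of $2\,C^\sharp_v(x,h)$, i.e. $(\dot\partial x^\top)_v=2C^\sharp_v(x,\cdot)^\top_v$, whence $(\dot\partial x^\bot)_v=-2C^\sharp_v(x,\cdot)^\top_v$.

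Finally, combining the two cases through the splitting $e^\top=(e^\top_v)^\top+(e^\bot_v)^\top$ --- the first summand contributing $0$ and the second, evaluated at $v$, contributing $2C^\sharp_v(e^\bot_v,\cdot)^\top_v$ --- yields the first identity in \eqref{vertder}, and the second follows from $(\dot\partial e^\top)_v+(\dot\partial e^\bot)_v=0$. The only delicate point in the argument is that $(\dot\partial x^\bot)_v(h)$ is a priori neither vertical nor horizontal, which is exactly why the computation has to be routed through the manifestly vertical quantity $(\dot\partial x^\top)_v(h)$; apart from that, the proof is a straightforward fiber differentiation and no genuine obstacle arises.
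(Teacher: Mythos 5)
Your argument is correct, but the key step is carried out differently from the paper. Both proofs share the same skeleton: the vertical case is trivial because $w^\top_{v'}=w$ and $w^\bot_{v'}=0$ are constant in $v'$, the general identities \eqref{vertder} follow by linearity of the projections from the two special cases, and $(\dot\partial e^\top)_v=-(\dot\partial e^\bot)_v$ comes from $\dot\partial e=0$. Where you diverge is in computing $(\dot\partial x^\top)_v$ for $g_v$-horizontal $x$: the paper chooses a smooth one-parameter family of $g_{v+te}$-orthonormal bases $u_1(t),\ldots,u_r(t)$ of $\ver_p$ (with constant signs $\varepsilon_i$), writes $x^\top_{v+te}=\sum_i\varepsilon_i g_{v+te}(u_i(t),x)u_i(t)$ and differentiates this explicit projection formula, the unwanted terms dying because $x$ is $g_v$-orthogonal to $u_i(0)$ and $\dot u_i(0)$; you instead differentiate the defining scalar identity $g_{v'}(x^\bot_{v'},u)=0$, note that $(\dot\partial x^\top)_v(h)$ lies in the fixed subspace $\ver_p$, and invoke the $g_v$-non-degeneracy of $\ver_p$ (the standing hypothesis needed for the decomposition anyway) to identify it with $2C^\sharp_v(x,h)^\top_v$. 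Your route is frame-free and makes explicit precisely which hypotheses enter (verticality of the derivative plus non-degeneracy of the vertical space), at the cost of the small preliminary observation that the derivative is vertical; the paper's computation is more explicit, reuses the orthonormal frame already built in the proof of Lemma \ref{XT} (which also supplies the smoothness of $v'\mapsto x^\top_{v'}$ that both arguments quietly rely on), and produces the vertical projection directly as a sum over the basis without appealing to non-degeneracy a second time. Both are complete proofs of the statement.
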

\begin{proof}
The identities involving $w$ are due to $w^\top$ being identically equal to $w$ and $w^\bot$ being identically zero. As for the identities involving $x$, consider an arbitrary $ e \in T_p \M $ along with a smooth one real parameter family of $ g_{v+te} $-orthonormal bases $ (u_1(t),\ldots,u_{ r}(t)) $ of $ \ver_p $ and denote by $ (\dot{u}_1(t),\ldots,\dot{u}_r(t)) $ their derivatives with respect to $t$. Observe that by continuity, $\varepsilon_i:=g_{v+te}(u_i(t),u_i(t))$ is constantly equal to $1$ or $-1$ and it does not depend on $t$. Then
\begin{equation*}
x^\top_{v+te}=\sum_{i=1}^r \varepsilon_i g_{v+te}(u_i(t),x) u_i(t)
\,,
\end{equation*}
which differentiates at $t=0$ to
\begin{equation*}
(\dot\partial x^\top)_v(e)
=
\sum_{i=1}^r \varepsilon_i \left( 2C_v(u_i(0),x,e) u_i(0)
+
g_v(\dot u_i(0),x) u_i(0)
+
g_v(u_i(0),x) \dot{u}_i(0) \right)
\,.
\end{equation*}
As $u_i(0)$ and $\dot u_i(0)$ are vertical, the last two terms are zero and the results follow straightforwardly. The last identities \eqref{vertder} follow from the first part of the proposition and the fact that $0=\dot\partial e=\dot\partial (e^\top)_v+\dot\partial (e^\perp)_v$, and then 
$\dot\partial (e^\perp)_v=-\dot\partial (e^\top)_v$.
\end{proof}
\begin{lemma}\label{Lemma3}
	For each admissible $ v $, vertical $ w $ and $ g_v $-horizontal $ x $ at $ p $ and a vertical vector field $ U $ with $ u = U_p $, any locally admissible extension $ V $ of $v$ and locally $ g_V $-horizontal vector field $ Y $ with $ y = Y_p $,
\begin{align}\label{UTA}
\nabla^v_w U &= \T^v_w u + (\nabla^v_w U)^\top_v,& \nabla^v_x U &= \At^v_x u + (\nabla^v_x U)^\top_v,
\\ \label{YTA}
\nabla^v_w Y &= (\nabla^v_w Y)^\bot_v + \T^v_w y - 2C^\sharp_v(y,\nabla^v_w V)^\top_v, & \nabla^v_x Y &= (\nabla^v_x Y)^\bot_v + \At^v_x y - 2C^\sharp_v(y,\nabla^v_x V)^\top_v.
\end{align}
	Furthermore, if $ Y $ is projectable, 
	\begin{equation}\label{Lieconversion}
	 (\nabla^v_w Y)^\bot_v = \At^v_y w.
	\end{equation}
\end{lemma}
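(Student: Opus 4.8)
The plan is to obtain all four formulas in \eqref{UTA}--\eqref{YTA} by projecting the covariant derivative onto its vertical and horizontal parts and reading off the definitions of $\T$ and $\At$, and then to derive \eqref{Lieconversion} from the torsion-free property applied to a projectable field. For the first two identities \eqref{UTA}: given the vertical vector field $U$, write $\nabla^v_b U = (\nabla^v_b U)^\top_v + (\nabla^v_b U)^\bot_v$ for an arbitrary $b$ at $p$. Since $U = U^\top$ identically, the cross term in the splitting of $\nabla^v U$ recorded just before Definition \ref{ATDefinition} reduces to $(\nabla^v U^\top)^\bot_v = (\nabla^v U)^\bot_v$, so $\chi^v_b u = (\nabla^v_b U)^\bot_v$. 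Taking $b$ vertical gives $\T^v_w u = \chi^v_w u = (\nabla^v_w U)^\bot_v$, hence $\nabla^v_w U = \T^v_w u + (\nabla^v_w U)^\top_v$; taking $b$ horizontal gives $\At^v_x u = (\nabla^v_x U)^\bot_v$, hence the second formula. Here I would be careful that $\chi$ is genuinely tensorial (it is, by the discussion preceding Definition \ref{ATDefinition}), so that the evaluation at $u = U_p$, $w$, $x$ makes sense and does not depend on extensions.

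For \eqref{YTA}: now $Y$ is $g_V$-horizontal, so $Y^\bot$ is \emph{not} identically $Y$ as an anisotropic vector field — $Y^\bot_v = Y_p$ only at the chosen $v$, while $Y^\bot_w$ differs for $w \neq v$. This is the genuinely Finslerian complication. The idea is to apply \eqref{nablaP} (or equivalently the splitting rule together with Proposition \ref{dotPartialTopProposition}) to the anisotropic vector field $Y^\bot$: one has $\nabla^v_w Y = \nabla^v_w (Y^\bot + Y^\top)$, and since $Y^\top$ vanishes at $p$ (but not as an anisotropic field) one must use $\nabla^v_w(Y^\top) = \nabla^v_w(Y^\top)_V - \dot\partial(Y^\top)_v(\nabla^v_w V)$, and similarly for $Y^\bot$. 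The vertical parts of these, combined with $\chi^v_w y = (\nabla^v_w Y^\bot)^\top_v + (\nabla^v_w Y^\top)^\bot_v = \T^v_w y$ (for $w$ vertical) and with the fiber-derivative formulas $(\dot\partial y^\top)_v = 2C^\sharp_v(y,\cdot)^\top_v$, $(\dot\partial y^\bot)_v = -2C^\sharp_v(y,\cdot)^\top_v$ from \eqref{vertder}, should assemble precisely into $(\nabla^v_w Y)^\bot_v + \T^v_w y - 2C^\sharp_v(y,\nabla^v_w V)^\top_v$. The main obstacle is exactly this bookkeeping: keeping straight which occurrences of $Y^\top$, $Y^\bot$ are being differentiated as anisotropic fields versus evaluated at $v$, and correctly accounting for the $\dot\partial$-terms that \eqref{nablaP} produces — these are what generate the Cartan correction $-2C^\sharp_v(y,\nabla^v_w V)^\top_v$ that is absent in the Riemannian case. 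The horizontal case ($x$ in place of $w$) is identical with $\At$ replacing $\T$.

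Finally, for \eqref{Lieconversion}, assume $Y$ projectable and $U$ vertical. Since $Y$ projects to $\tilde Y$ on $\B$ and $U$ is $\sigma$-related to $0$, the bracket $[U,Y]$ is vertical. By the torsion-free property (i) of the Chern connection, $\nabla^v_w Y - \nabla^v_y U = [U,Y]_p$ is vertical, so their horizontal parts coincide: $(\nabla^v_w Y)^\bot_v = (\nabla^v_y U)^\bot_v$. But $y$ is $g_v$-horizontal, and for a vertical vector field $U$ the second identity in \eqref{UTA} gives $(\nabla^v_y U)^\bot_v = \At^v_y u = \At^v_y w$ (writing $w$ for $u = U_p$ and $y$ for the horizontal direction). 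Hence $(\nabla^v_w Y)^\bot_v = \At^v_y w$. The only subtlety is noting that $\At^v_y w$ only depends on $y$ and $w$ at $p$ and not on the extensions $Y$, $U$, which is immediate from the tensoriality of $\At$.
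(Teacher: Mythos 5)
Your proposal is correct and follows essentially the same route as the paper: \eqref{UTA} from the $\chi$-decomposition with $U^\top\equiv U$, $U^\bot\equiv 0$; \eqref{YTA} from $Y^\top_V\equiv 0$, $Y^\bot_V\equiv Y$ together with \eqref{nablaP} and Proposition \ref{dotPartialTopProposition} (the bookkeeping you outline does close exactly as claimed, since the Cartan terms are vertical and only survive in the $(\cdot)^\top_v$ part); and \eqref{Lieconversion} from torsion-freeness plus the verticality of the bracket of a vertical field with a projectable one, then \eqref{UTA}. The only nitpick is that for $\nabla^v_w(Y^\top)_V=0$ you need $(Y^\top)_V$ to vanish on a neighborhood (which it does, as $Y$ is locally $g_V$-horizontal), not merely at $p$.
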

\begin{proof}
	From the decomposition of the covariant derivative of an arbitrary vector field $ E $ with $ e = E_p $ and by definition of the tensors $ \T $ and $ \At $
	\begin{equation*}
	\nabla^v E
	=
	(\nabla^v E^\top)^\top_v
	+
	(\T+\At)^v e
	+
	(\nabla^v E^\bot)^\bot_v
	\,,
	\end{equation*}
	we obtain for a vertical vector field $ U $ with $ u = U_p $
	\begin{equation}\label{nablaU}
	\nabla^v U
	=
	(\nabla^v U)^\top_v
	+
	(\T+\At)^v u,
	\end{equation}
	which implies \eqref{UTA}.
	Moreover, from the above decomposition, taking into account that $Y=Y^\bot_V$ and $Y^\top_V=0$, and using \eqref{nablaP} and Proposition \ref{dotPartialTopProposition}, 
	\begin{multline}\label{nablaY}
	\nabla^v Y
	=
	(\T+\At)^v y
	+
	(\nabla^v Y)^\bot_v
	-
	(\dot\partial Y^\top)_v(\nabla^v V)^\top_v
	-
	(\dot\partial Y^\bot)_v(\nabla^v V)^\bot_v
	\\
	=
	(\T+\At)^v y
	+
	(\nabla^v Y)^\bot_v
	-
	2C^\sharp_v(y,\nabla^v V)^\top_v
	\,,
	\end{multline}
	which is equivalent to \eqref{YTA}. For the last identity \eqref{Lieconversion} observe that the Lie bracket of a vertical vector field and a  projectable  vector field is vertical, and then one has that $(\nabla^v_w Y)^\bot_v=(\nabla^v_y W)^\bot_v=\At^v_yw$, being $W$ a vertical extension of $w$, where in the last equality we have used \eqref{UTA}.
\end{proof}
\begin{proposition}\label{TAProposition}
For each admissible $ v $, arbitrary $e$, $ g_v $-horizontal $ x $ and vertical $ u $ and $ w $ at $p$,
\begin{enumerate}[(i)]
\item $\T^v_e$ and $\At^v_e$ are skew-symmetric on $ T_p \M $, and map $g_v$-horizontal vectors into vertical ones and vice versa,
\item if $v$ is vertical, then $\T^v$ satisfies $\T^v_u w = \T^v_w u$,
\item if $ v $ is horizontal, then $\At^v$ satisfies $\At^v_x v = - \At^v_v x$ and in particular $\At^v_vv = 0$.
\end{enumerate}
\end{proposition}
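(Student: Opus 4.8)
The plan is to derive all three items from properties of the tensor $\chi$, using the one‑form identities \eqref{nablaU} and \eqref{nablaY} from the proof of Lemma~\ref{Lemma3} to express $\chi$ through the covariant derivative $\nabla^v$ up to Cartan corrections, and then invoking the almost‑compatibility of $\nabla g$, torsion‑freeness, and the homogeneity relation $C_v(v,\cdot,\cdot)=0$; part~(iii) will also use Lemma~\ref{Lemma1}. The starting observations are that, for a vertical extension $W$ of $w$ and a $g_V$-horizontal extension $Y$ of a $g_v$-horizontal $y$, one has for every direction $h$ at $p$
\[
\chi^v_h w=(\nabla^v_h W)^\bot_v,\qquad \chi^v_h y=(\nabla^v_h Y)^\top_v+2C^\sharp_v(y,\nabla^v_h V)^\top_v,
\]
and, when $v$ is horizontal with horizontal extension $V$, also $\chi^v_h v=(\nabla^v_h V)^\top_v$ (here $C^\sharp_v(v,\cdot)=0$ is used).

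For part (i), the two displayed formulas show at once that $\chi^v_h$ sends vertical vectors to $g_v$-horizontal ones and $g_v$-horizontal vectors to vertical ones, hence so do $\T^v_e=\chi^v_{e^\top_v}$ and $\At^v_e=\chi^v_{e^\bot_v}$. For the $g_v$-skew-symmetry it is enough to show that $\chi^v_h$ is $g_v$-skew-symmetric for every $h$; by bilinearity and the interchange property the diagonal cases vanish, so this reduces to $g_v(\chi^v_h x,u)+g_v(x,\chi^v_h u)=0$ for $x$ $g_v$-horizontal and $u$ vertical. Substituting the formulas and using $g_v(C^\sharp_v(a,c),u)=C_v(u,a,c)$ for vertical $u$, the left-hand side becomes $g_v(\nabla^v_h X,u)+g_v(x,\nabla^v_h U)+2C_v(u,x,\nabla^v_h V)$; applying almost-compatibility to the identically zero function $g_V(X,U)$ replaces the first two terms by $-2C_v(x,u,\nabla^v_h V)$, and the remaining Cartan terms cancel by the symmetry of $C$. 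Then $\T^v_e$ and $\At^v_e$ inherit the skew-symmetry.

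For part (ii), with $v$ vertical we get $\T^v_u w=\chi^v_u w=(\nabla^v_u W)^\bot_v$ and likewise $\T^v_w u=(\nabla^v_w U)^\bot_v$, so $\T^v_u w-\T^v_w u=(\nabla^v_u W-\nabla^v_w U)^\bot_v=([U,W]\vert_p)^\bot_v$ by torsion-freeness; this vanishes because the fibers are submanifolds, so vertical vector fields are closed under the Lie bracket.

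For part (iii), with $v$ horizontal I would take a horizontal, projectable extension $V$ of $v$ (possible by the Remark following Lemma~\ref{legendrelemma}) and any projectable extension $X$ of $x$, so that $\At^v_x v=(\nabla^v_x V)^\top_v$. Pairing the vertical vector $\At^v_x v+\At^v_v x$ with an arbitrary vertical $w$ and using the skew-symmetry of $\At^v_v$ from part~(i) together with $\At^v_v w=(\nabla^v_v W)^\bot_v$, one is reduced to proving
\[
g_v(\nabla^v_x V,w)-g_v(x,\nabla^v_v W)=0.
\]
Torsion-freeness turns $\nabla^v_v W$ into $\nabla^v_w V+[V,W]\vert_p$ and $\nabla^v_x W$ into $\nabla^v_w X+[X,W]\vert_p$, whose bracket terms are vertical and hence $g_v$-orthogonal to $v$ and $x$; almost-compatibility applied to $g_V(V,W)\equiv 0$, together with $C_v(v,\cdot,\cdot)=0$, gives $g_v(\nabla^v_x V,w)=-g_v(v,\nabla^v_x W)$. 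What remains is $-\bigl(g_v(v,\nabla^v_w X)+g_v(x,\nabla^v_w V)\bigr)$, which almost-compatibility applied to $g_V(V,X)$ turns into $-w(g_V(V,X))\vert_p+2C_v(v,x,\nabla^v_w V)$; the second term vanishes by $C_v(v,\cdot,\cdot)=0$, and the first because \eqref{gv2} and projectability of $V$ and $X$ show $g_V(V,X)=\tilde g_{\tilde V}(\tilde V,\tilde X)\circ\sigma$, which is constant along the fibers. Hence $\At^v_x v+\At^v_v x$ is $g_v$-orthogonal to the non-degenerate vertical space, so $\At^v_x v=-\At^v_v x$, and $x=v$ yields $\At^v_v v=0$. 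I expect this last part to be the main obstacle: one must work with extensions that are simultaneously projectable and (for $V$) horizontal, and keep careful track of the Cartan corrections from \eqref{nablaY} so that the cancellations powered by $C_v(v,\cdot,\cdot)=0$ and Lemma~\ref{Lemma1} actually go through.
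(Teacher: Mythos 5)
Your proposal is correct, and part (i) coincides with the paper's own proof: the identities from Lemma \ref{Lemma3}, almost-compatibility applied to the identically vanishing function $g_V(Y,U)$, and the symmetry of $C$. For (ii) and (iii) you take a more self-contained route than the paper. In (ii) the paper identifies $\T^v$ on vertical vectors with the second fundamental form of the fibers and cites its known symmetry, whereas you reprove that symmetry directly from torsion-freeness and the integrability of the vertical distribution; note that your argument never uses that $v$ is vertical, so it actually yields $\T^v_u w=\T^v_w u$ for every admissible $v$. In (iii) the paper derives the auxiliary identity \eqref{vwxY} from the Koszul formula \eqref{koszul} and then eliminates $g_v([X,V],w)$ via \eqref{YTA}, while you bypass the Koszul formula and argue directly with torsion-freeness and almost-compatibility applied to $g_V(V,W)$ and $g_V(V,X)$; since the Koszul formula is itself a repackaging of these two properties, the computations are equivalent, and your reduction to $g_v(\nabla^v_xV,w)=g_v(x,\nabla^v_vW)$ checks out. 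A genuine merit of your version is that it makes explicit two points that are only implicit in the paper's derivation of \eqref{vwxY}: that the extensions $V$ and $X$ should be taken projectable (so that their brackets with $W$ are vertical), and that $w(g_V(V,X))=0$, which you justify through \eqref{gv2} and the constancy of $\tilde g_{\tilde V}(\tilde V,\tilde X)\circ\sigma$ along the fibers. The paper's route is shorter because \eqref{vwxY} and \eqref{YTA} absorb the bookkeeping; yours is longer but elementary and records exactly where horizontality, projectability and $C_v(v,\cdot,\cdot)=0$ enter.
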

\begin{proof}
Consider a $ g_v $-horizontal vector $ y $ at $ p $.
By \eqref{nablaY} and \eqref{nablaU}, for any local choice of an admissible extension $ V $ of $v$, vertical extension $ U $ of $u$ and $ g_V $-horizontal extension $ Y $ of $y$, it follows that
\begin{multline*}
g_v((\T+\At)^v y,u)
=
g_v(\nabla^v Y,u) + 2C_v(y,u,\nabla^v V)
=
- g_v(y,\nabla^v U)
\\
=
- g_v(y,(\T+\At)^v u)
\,,
\end{multline*}
where we have also used in the second equality that $ g_V(Y,U)=0$, and therefore $0=e(g_V(Y,U))=g_v(\nabla^v_e Y,u)+ g_v(y,\nabla^v_e U) + 2C_v(y,u,\nabla^v_e V)$ for any $e\in T_p\M$. This proves the skew-symmetry. The last statement of $(i)$ is a direct consequence of Lemma \ref{Lemma3}.

The symmetry of $ \T^v $ for vertical vectors is because it coincides with the second fundamental form of the submersion fibers  (see for example \cite[\S 3.1]{2022HuberJavaloyes}),  while the antisymmetry $ \At^v_x v = - \At^v_v x $ under the assumption that $ v \in \hor_p $ can be obtained from the Koszul formula \eqref{koszul} under the following form, where $ W $ is a local vertical extension of $ w $, $ V $ a locally horizontal extension of $ v $ and $ X $ a locally $ g_V $-horizontal extension of $ x $,
\begin{equation}\label{vwxY}
g_v(w,\nabla^v_x V)
=
\tfrac{1}{2} g_v([X,V],w)
-C_v(x,w,\nabla^v_v V)
\,,
\end{equation}
 (recall that the Lie bracket of a projectable vector field with another vertical one is vertical and $C_v(v,\cdot,\cdot)=0$ by homogeneity). 
 By \eqref{YTA} and \eqref{vwxY},  we obtain
\begin{equation}\label{vwxy}
g_v(w,\At^v_x v)
=
g_v(w,\nabla^v_x V) 
=
\tfrac{1}{2} g_v([X,V],w)
- C_v(x,w,\nabla^v_{v} V)
\,,
\end{equation}
where,  using again \eqref{YTA}, we can compute 
\begin{multline*}
g_v([X,V],w)
=
g_v(\nabla^v_xV,w)-g_v(\nabla^v_vX,w)
\\
=
g_v(\At^v_xv,w)-g_v(\At^v_vx,w)
+2C_v(x,\nabla^v_vV,w)
\,,
\end{multline*}
such that \eqref{vwxy} simplifies to
\begin{equation*}
g_v(w,\At^v_x v)
=
\tfrac{1}{2} g_v([X,V],w)
-C_v(x,w,\nabla^v_v V)
=
\tfrac{1}{2}g_v(\At^v_x v - \At^v_v x,w)
\,.
\end{equation*}
Conclude by non-degeneracy.
\end{proof}
While we have used the tilde $ \tilde{\text{ }} $ to designate features of the base manifold $\B$ of the submersion, let us use the caret $ \hat{\text{ }} $ for the submersion fibers as submanifolds of $\M$.
\begin{definition}
Let $ \hat{Q} $ be the anisotropic tensor given for each admissible $ v $ and $ g_v $-horizontal $ x $ by $ \hat{Q}^v_x = \hat{Q}^v x = 0 $ and for each vertical $ u,w $ by
\begin{multline*}
\hat{Q}^v_u w
=
-
\left(
\T^v_v C^\sharp_v(u,w)
+
C^\sharp_v(\T^v_v u,w)
+
C^\sharp_v(u,\T^v_v w)
\right. \\ \left.
+
C^\sharp_v(C^\sharp_v(u,w)^{\top}_v,\T^v_v v)
-
C^\sharp_v(C^\sharp_v(u,\T^v_v v)^{ \top}_v,w)
-
C^\sharp_v(u,C^\sharp_v(w,\T^v_v v)^{\top}_v)
\right)^\top_v
\,.
\end{multline*}
\end{definition}
\begin{proposition}[Gauss formula] \label{GaussFormulaVert}
For any vertical $ u $, vertical admissible $ v $, and vertical $ w $ at $ p $ and any vertical extension $ W $ of $ w $,
\begin{equation*}
(\nabla^v_u W)^\top_v
=
\hat\nabla^v_u W
+
\hat{Q}^v_u w
\,.
\end{equation*}
\end{proposition}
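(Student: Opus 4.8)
The plan is to recognise the right-hand side as the Levi-Civita--Chern connection $\hat\nabla$ of the induced pseudo-Finsler metric on the fiber $\fiber=\sigma^{-1}(\sigma(p))$ (which is non-degenerate by the definition of a pseudo-Finsler submersion), and then to conclude by uniqueness of that connection. Concretely, for a vertical admissible $v$, a vertical $u$ and a vertical vector field $W$ with $w=W_p$, set $\bar\nabla^v_u W := (\nabla^v_u W)^\top_v - \hat{Q}^v_u w$. First one checks that this is an anisotropic connection on $\fiber$: since $u$ and $W$ are vertical, $\nabla^v_u W$ depends only on $u$ and on $W\vert_\fiber$ (a vector field vanishing along $\fiber$ has vanishing vertical derivatives, hence vanishing $\nabla^v_u$), so $(\nabla^v_u W)^\top_v$ is intrinsic to $\fiber$, while $\hat{Q}$ is tensorial, so the connection axioms are inherited from $\nabla$. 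Torsion-freeness is immediate: $(\nabla^v_u W-\nabla^v_w U-[U,W]_p)^\top_v=0$ because $\nabla$ is torsion-free and $[U,W]$ is vertical, and the expression defining $\hat{Q}^v_u w$ is symmetric in $u,w$ (the first and fourth terms contain the symmetric $C^\sharp_v(u,w)$, while the pairs of terms two--three and five--six are interchanged), so $\hat{Q}^v_u w=\hat{Q}^v_w u$.

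It then remains to prove that $\bar\nabla$ is almost $\hat g$-compatible. Unwinding the definition of $\bar\nabla\hat g$ through \eqref{covder}--\eqref{fiberder} and recalling $\dot\partial\hat g=2\hat C$, this amounts to the identity
\[
s(\hat g_V(U,W))\vert_p=\hat g_v(\bar\nabla^v_s U,w)+\hat g_v(u,\bar\nabla^v_s W)+2\hat C_v(u,w,\bar\nabla^v_s V)
\]
for all vertical $s,u,w$ and vertical flagpole $v$ with vertical extension $V$. I would start from the ambient almost-compatibility identity with all arguments vertical, use that $g$ and $\hat g$ (resp.\ $C$ and $\hat C$) coincide on vertical vectors and that $g_v(\nabla^v_s U,w)=\hat g_v((\nabla^v_s U)^\top_v,w)$, and then substitute $(\nabla^v_s U)^\top_v=\bar\nabla^v_s U+\hat{Q}^v_s u$ (and the analogous identity for $W$) together with $\nabla^v_s V=\T^v_s v+(\nabla^v_s V)^\top_v=\T^v_s v+\bar\nabla^v_s V+\hat{Q}^v_s v$, where the first step is the first identity of \eqref{UTA} and the second is the definition of $\bar\nabla$ applied to $V$. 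After cancelling the $\bar\nabla$-terms, the required identity reduces to the purely algebraic statement
\[
g_v(\hat{Q}^v_s u,w)+g_v(\hat{Q}^v_s w,u)+2C_v(u,w,\T^v_s v)+2C_v(u,w,\hat{Q}^v_s v)=0 .
\]

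To verify this last identity I would use $\T^v_s v=\T^v_v s$ (Proposition \ref{TAProposition}(ii)) and abbreviate $\mathcal N:=\T^v_v$, a $g_v$-skew-symmetric endomorphism by Proposition \ref{TAProposition}(i); by $0$-homogeneity $C^\sharp_v(v,\cdot)=0$, so four of the six terms defining $\hat{Q}^v_s v$ vanish and one gets $\hat{Q}^v_s v=-C^\sharp_v(s,\mathcal N v)^\top_v$. Expanding $g_v(\hat{Q}^v_s u,w)$ and $g_v(\hat{Q}^v_s w,u)$ via $g_v(C^\sharp_v(a,b),c)=C_v(a,b,c)$, and using the total symmetry of $C_v$ together with the skew-symmetry of $\mathcal N$, the terms of the form $C_v(s,\cdot,\mathcal N\cdot)$ cancel among themselves; the terms $2C_v(u,w,\mathcal N s)$ and $2C_v(u,w,C^\sharp_v(s,\mathcal N v)^\top_v)$ cancel against $2C_v(u,w,\T^v_s v)$ and $2C_v(u,w,\hat{Q}^v_s v)$; and what survives, namely $-C_v(w,C^\sharp_v(s,u)^\top_v,\mathcal N v)-C_v(u,C^\sharp_v(s,w)^\top_v,\mathcal N v)+C_v(w,s,C^\sharp_v(u,\mathcal N v)^\top_v)+C_v(u,s,C^\sharp_v(w,\mathcal N v)^\top_v)$, vanishes once each summand is rewritten as $g_v$ of two $\top_v$-projected copies of $C^\sharp_v$ and the symmetry of $C^\sharp_v$ is used (the first summand cancels the fourth, the second the third). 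Hence $\bar\nabla$ is torsion-free and almost $\hat g$-compatible, so $\bar\nabla=\hat\nabla$ by uniqueness of the Levi-Civita--Chern connection of the fiber, which is exactly the claimed Gauss formula. The main obstacle is this final algebraic cancellation: it is a somewhat lengthy bookkeeping whose success rests on the precise shape of $\hat{Q}$ — manifestly tailored for it — and on the homogeneity relation $C^\sharp_v(v,\cdot)=0$.
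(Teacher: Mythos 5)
Your argument is correct, and it ultimately rests on the same principle as the paper's (essentially omitted) proof: the paper simply defers to the Gauss formula for pseudo-Finsler submanifolds in \cite{2022HuberJavaloyes}, applied to the fibers, where the correction tensor is pinned down by the torsion-free plus almost $g$-compatible characterization of the Chern connection, i.e.\ by the Koszul formula \eqref{koszul}. What you do differently is to make this self-contained for the present setting: you take $\hat Q$ as given, set $\bar\nabla^v_u W=(\nabla^v_u W)^\top_v-\hat Q^v_u w$, check that it is a torsion-free, almost $\hat g$-compatible anisotropic connection on the fiber, and conclude by uniqueness, whereas the submanifold route derives the correction term by subtracting the two Koszul formulas (which is where the precise shape of $\hat Q$ comes from); your route instead has to close the purely algebraic identity
$g_v(\hat Q^v_s u,w)+g_v(\hat Q^v_s w,u)+2C_v(u,w,\T^v_s v)+2C_v(u,w,\hat Q^v_s v)=0$
by hand. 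That verification is sound: the symmetry $\hat Q^v_u w=\hat Q^v_w u$ (needed for torsion-freeness), the reduction $\hat Q^v_s v=-C^\sharp_v(s,\T^v_v v)^\top_v$ via $C^\sharp_v(v,\cdot)=0$ (a harmless slip: five, not four, of the six terms vanish), the use of $\T^v_s v=\T^v_v s$ and of the $g_v$-skew-symmetry of $\T^v_v$ from Proposition \ref{TAProposition}, and the final pairwise cancellation after rewriting the four surviving terms as $g_v$-products of $\top_v$-projected copies of $C^\sharp_v$ all check out. Two small points are worth making explicit: the ambient almost-compatibility identity should be written with a \emph{vertical} locally admissible extension $V$ of $v$ (which exists in adapted coordinates), so that $s(g_V(U,W))$ only involves data along the fiber and coincides with the intrinsic derivative $s(\hat g_V(U,W))$; and the intrinsic verticality of $(\nabla^v_u W)^\top_v$ uses, as you note, that a field vanishing along the fiber has vanishing $\nabla^v_u$ for $u$ tangent to the fiber, together with the $g_v$-non-degeneracy of $\ver_p$ assumed in Section \ref{fundTA}.
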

\begin{proof}
	Completely analogous to the Gauss formula for pseudo-Finsler submanifolds (see for example \cite[Eq. (17) and Lemma 2]{2022HuberJavaloyes} for a reference using the same approach).
	\end{proof}
\begin{definition}\label{tildeQ}
Let $ \tilde{Q} $ be the anisotropic tensor given for each admissible $ v $ and vertical $ w $ by $ \tilde{Q}^v_w = \tilde{Q}^v w = 0 $ and for each $ g_v $-horizontal $ x,y $ by
\begin{equation*}
\tilde{Q}^v_x y
=
\left(\ \At^v_v C^\sharp_v(x,y) + C^\sharp_v(\At^v_v x,y) + C^\sharp_v(x,\At^v_v y) \right)^\bot_v
\,.
\end{equation*}
\end{definition}
\begin{proposition}[Dual Gauss formula]\label{VXYhor}
For each  projectable and horizontal $ V $ defined  in a neighborhood of $ p $, and projectable $ X $ and $ Y $ locally $ g_V $-horizontal,
\begin{equation*}
(\nabla^V_X Y)^\bot_V
=
(\tilde\nabla^{\tilde{V}}_{\tilde{X}} \tilde{Y})^\ast_V
+
\tilde{Q}^V_X Y.
\end{equation*}
 where  $(\cdot)^*_V$ denotes the  $g_V$-horizontal lift.
\end{proposition}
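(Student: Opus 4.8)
The plan is to determine the $g_V$-horizontal anisotropic vector field $(\nabla^V_XY)^\bot_V$ by pairing it, through $g_V$, with an arbitrary projectable and locally $g_V$-horizontal vector field $Z$. Since $(\nabla^V_XY)^\bot_V$, $(\tilde\nabla^{\tilde V}_{\tilde X}\tilde Y)^\ast_V$ and $\tilde Q^V_XY$ are all $g_V$-horizontal, and $g_v$ restricts to a non-degenerate scalar product on $\hor_p$ at each horizontal $v$, it suffices to show that $g_v(\nabla^v_xY,z)=\tilde g_{\tilde v}(\tilde\nabla^{\tilde v}_{\tilde x}\tilde Y,\tilde z)+g_v(\tilde Q^v_xy,z)$ for every $g_v$-horizontal $z$; and, by Lemmas \ref{Lemma1} and \ref{XT}, every such $z$ equals $Z_p$ for some projectable, locally $g_V$-horizontal $Z$ (lift a local extension of $\dif\sigma(z)$ to a projectable field and subtract its $g_V$-vertical part). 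First I would write the Koszul formula \eqref{koszul} on $\M$ with $(E,H,B)=(X,Y,Z)$ and, in parallel, the Koszul formula on $\B$ with $(\tilde X,\tilde Y,\tilde Z)$. Because $g_V(Y,Z)=\tilde g_{\tilde V}(\tilde Y,\tilde Z)\circ\sigma$ by Lemma \ref{Lemma1}, because $X$ is $\sigma$-related to $\tilde X$, and because $\dif\sigma$ sends the $g_v$-horizontal part of each of $[Z,X]$, $[Z,Y]$, $[X,Y]$ to the corresponding bracket on $\B$, the three ``metric'' terms and the three ``bracket'' terms of the two Koszul expansions coincide after pairing with horizontal vectors and using \eqref{gv1}. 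Hence the whole difference $2g_v(\nabla^v_xY,z)-2\tilde g_{\tilde v}(\tilde\nabla^{\tilde v}_{\tilde x}\tilde Y,\tilde z)$ is carried by the three Cartan terms, i.e.\ by $C_v(z,y,\nabla^v_xV)$ against $\tilde C_{\tilde v}(\tilde z,\tilde y,\tilde\nabla^{\tilde v}_{\tilde x}\tilde V)$, and cyclically.

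The pivotal intermediate step is the identity $\dif\sigma(\nabla^v_xV)=\tilde\nabla^{\tilde v}_{\tilde x}\tilde V$ for every $g_v$-horizontal $x$. I would run the same comparison with $H=V$ in place of $Y$. The homogeneity relations $C_v(v,\cdot,\cdot)=0$ and $\tilde C_{\tilde v}(\tilde v,\cdot,\cdot)=0$ annihilate two of the three Cartan terms on each side, reducing the comparison to $g_v(\nabla^v_xV,z)-\tilde g_{\tilde v}(\tilde\nabla^{\tilde v}_{\tilde x}\tilde V,\tilde z)=\tilde C_{\tilde v}(\tilde x,\tilde z,\Delta)$, where $\Delta:=\tilde\nabla^{\tilde v}_{\tilde v}\tilde V-\dif\sigma(\nabla^v_vV)$; here \eqref{YTA} applied to $V$, together with $C^\sharp_v(v,\cdot)=0$ and $\At^v_vv=0$, shows that $\nabla^v_vV$ is $g_v$-horizontal, and the Corollary relating the Cartan tensors of $\M$ and $\B$ on horizontal arguments identifies $\dif\sigma$ of its horizontal part with the projected vector. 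Evaluating this comparison in the direction $x=v$ (take $X=V$) makes the right-hand side $\tilde C_{\tilde v}(\tilde v,\tilde z,\Delta)=0$, so non-degeneracy of $\tilde g_{\tilde v}$ forces $\Delta=0$, and feeding $\Delta=0$ back into the comparison yields $\dif\sigma(\nabla^v_xV)=\tilde\nabla^{\tilde v}_{\tilde x}\tilde V$.

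Returning to a general $Y$, \eqref{YTA} applied to $Y=V$ (again using $C^\sharp_v(v,\cdot)=0$) identifies the vertical part of $\nabla^v_xV$ as $\At^v_xv$, and the Corollary relating the Cartan tensors then collapses each Cartan discrepancy to a single term, $C_v(z,y,\nabla^v_xV)-\tilde C_{\tilde v}(\tilde z,\tilde y,\tilde\nabla^{\tilde v}_{\tilde x}\tilde V)=C_v(z,y,\At^v_xv)$, and cyclically. Therefore
\begin{equation*}
g_v(\nabla^v_xY,z)-\tilde g_{\tilde v}(\tilde\nabla^{\tilde v}_{\tilde x}\tilde Y,\tilde z)
=
-C_v(z,y,\At^v_xv)-C_v(x,z,\At^v_yv)+C_v(x,y,\At^v_zv).
\end{equation*}
To conclude, I would expand $g_v(\tilde Q^v_xy,z)$ from Definition \ref{tildeQ} and the definition of $C^\sharp$, using the skew-symmetry of $\At^v_v$ on $T_p\M$ (Proposition \ref{TAProposition}(i)) and $\At^v_xv=-\At^v_vx$ on $g_v$-horizontal vectors (Proposition \ref{TAProposition}(iii)); a short manipulation turns $g_v(\tilde Q^v_xy,z)$ into precisely the right-hand side above. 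Non-degeneracy of $g_v$ on $\hor_p$ then gives the asserted identity of $g_V$-horizontal anisotropic vector fields, evaluated at $v=V_p$.

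The hard part is the intermediate identity $\dif\sigma(\nabla^v_xV)=\tilde\nabla^{\tilde v}_{\tilde x}\tilde V$: because $\nabla^v$ is the Chern connection and not the Levi-Civita connection of the frozen metric $g_v$, the classical O'Neill argument for pseudo-Riemannian submersions does not transfer verbatim, and one must genuinely exploit the relation $C_v(v,\cdot,\cdot)=0$ together with the device of specialising the comparison to the flagpole direction $v$ itself; everything else reduces to organised bookkeeping with \eqref{koszul}, \eqref{YTA} and the Cartan-tensor corollary.
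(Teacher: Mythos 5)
Your proposal is correct and follows essentially the same route as the paper: compare the Koszul formulas on $\M$ and $\B$ after matching the metric and bracket terms via Lemma \ref{Lemma1}, pin down $\nabla^v_xV=(\tilde\nabla^{\tilde v}_{\tilde x}\tilde V)^\ast_v+\At^v_xv$ by specialising the comparison to the flagpole direction (using $C_v(v,\cdot,\cdot)=0$, \eqref{YTA} and $\At^v_vv=0$), and then identify the surviving Cartan terms with $g_v(\tilde Q^v_xy,z)$ through the skew-symmetry of $\At^v_v$ and $\At^v_xv=-\At^v_vx$. The only cosmetic difference is that you phrase the intermediate step through the projected discrepancy $\Delta$ and the Cartan-tensor Corollary (whose $\mathrm{I\!I}^\hor$-term vanishes because the third argument is $g_v$-horizontal), whereas the paper works with the single subtracted Koszul identity \eqref{koszulmedia} and substitutes $V$ for $Z$ and for $Y,Z$; the computations are the same.
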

\begin{proof}
Let $ v = V_p $, $ x = X_p $ and $ y = Y_p $. Using the Koszul formula \eqref{koszul} on $ \B $
\begin{multline*}
\tilde{g}_{\tilde{v}}(\tilde{x},\tilde\nabla^{\tilde{v}}_{\tilde{y}} \tilde{Z})
=
\tilde{y}(\tilde{g}_{\tilde{V}}(\tilde{X},\tilde{Z}))
+\tilde{z}(\tilde{g}_{\tilde{V}}(\tilde{Y},\tilde{X}))
-\tilde{x}(\tilde{g}_{\tilde{V}}(\tilde{Y},\tilde{Z}))
\\
+\tilde{g}_{\tilde{v}}([\tilde{X},\tilde{Y}],\tilde{z})+\tilde{g}_{\tilde{v}}(\tilde{y},[\tilde{X},\tilde{Z}])+\tilde{g}_{\tilde{v}}(\tilde{x},[\tilde{y},\tilde{z}])
\\
-2\tilde{C}_{\tilde{v}}(\tilde{x},\tilde{z},\tilde\nabla^{\tilde{v}}_{\tilde{y}} \tilde{V})
-2\tilde{C}_{\tilde{v}}(\tilde{y},\tilde{x},\tilde\nabla^{\tilde{v}}_{\tilde{z}} \tilde{V})
+2\tilde{C}_{\tilde{v}}(\tilde{y},\tilde{z},\tilde\nabla^{\tilde{v}}_{\tilde{x}} \tilde{V})
\end{multline*}
can be lifted to
\begin{multline*}
2g_v(x,(\tilde\nabla^{\tilde{v}}_{\tilde{y}} \tilde{Z})^\ast_v)
=
y(g_V(X,Z))+z(g_V(Y,X))-x(g_V(Y,Z))
\\
+g_v([X,Y],z)+g_v(y,[X,Z])+g_v(x,[Y,Z])
\\
-2C_v(x,z,(\tilde\nabla^{\tilde{v}}_{\tilde{y}} \tilde{V})^\ast_v)
-2C_v(y,x,(\tilde\nabla^{\tilde{v}}_{\tilde{z}} \tilde{V})^\ast_v)
+2C_v(y,z,\tilde\nabla^{\tilde{v}}_{\tilde{x}} \tilde{V})
\,.
\end{multline*}
This is because by Proposition \ref{dotPartialTopProposition}, $\tilde{y}(\tilde{g}_{\tilde{V}}(\tilde{X},\tilde{Z}))=\tilde y(g_V(X,Z)\circ\sigma)=y(g_V(X,Z))$, and so on for the similar terms, and also because \[\tilde{g}_{\tilde{v}}([\tilde{X},\tilde{Y}],\tilde{z})=g_v([\tilde{X},\tilde{Y}]^*_v,z))
=g_v([X,Y]^\perp_v,z)=g_v([X,Y],z).\]
 Making use of the Koszul formula \eqref{koszul} for $\M$, it follows that
\begin{multline}\label{koszulmedia}
2g_v(x,\nabla^v_y Z - (\tilde\nabla^{\tilde{v}}_{\tilde{y}} \tilde{Z})^\ast_v)
=
-2C_v(x,z,\nabla^v_y V - (\tilde\nabla^{\tilde{v}}_{\tilde{y}} \tilde{V})^\ast_v)
-2C_v(y,x,\nabla^v_z V - (\tilde\nabla^{\tilde{v}}_{\tilde{z}} \tilde{V})^\ast_v)
\\
+2C_v(y,z,\nabla^v_x V - (\tilde\nabla^{\tilde{v}}_{\tilde{x}} \tilde{V})^*_v),
\end{multline}
where, by our assumption that $ V $ is horizontal, we may substitute $ Z $ and $ Y $ for $ V $ to furthermore obtain
$
g_v(x,\nabla^v_v V - (\tilde\nabla^{\tilde{v}}_{\tilde{v}} \tilde{V})^\ast_v)=0.$ As $(\nabla^v_v V)^\top=\At^v_vv=0$ by  part $(iii)$ of Proposition \ref{TAProposition},  we conclude that $\nabla^v_v V = (\tilde\nabla^{\tilde{v}}_{\tilde{v}} \tilde{V})^\ast_v$,
and, replacing now only $Z$ by $V$ in \eqref{koszulmedia}, we get
\begin{equation*}
g_v(x,\nabla^v_y V - (\tilde\nabla^{\tilde{v}}_{\tilde{y}} \tilde{V})^\ast_v)
=
0
\,,
\end{equation*}
which, using \eqref{YTA} under the form $ (\nabla^v_y V)^\top_v = \At^v_y v $, we find that $\nabla^v_y V - (\tilde\nabla^{\tilde{v}}_{\tilde{y}} \tilde{V})^\ast_v=\At^v_yv$, so using this  (and similar formulas replacing $y$ with $x$ and $z$)
 in \eqref{koszulmedia},  we obtain that
\begin{equation*}
g_v(x,\nabla^v_y Z - (\tilde\nabla^{\tilde{v}}_{\tilde{y}} \tilde{Z})^\ast_v)
=
-g_v(x,C^\sharp_v(z,\At^v_y v))
-g_v(x,C^\sharp_v(y,\At^v_z v))
+g_v(\At^v_x v,C^\sharp_v(y,z))
\,.
\end{equation*}
Conclude the result at $p$ by Proposition \ref{TAProposition}, namely, using that $\At^v_xv=-\At^v_vx$ and then the skew-symmetry of $\At^v_v$, and the non-degeneracy of $ g_v $. The same process completes the proof on the neighbourhood where $V$ is horizontal and $X$ and $Y$ are $g_V$-horizontal.
\end{proof}
\begin{proposition}\label{TAProposition2}
For each horizontal $ v $ at $p$ with a  local horizontal projectable extension $V$  with its projection $\tilde V$ satisfying that  $\tilde{\nabla}^{\tilde{v}}\tilde{V} = 0$, and $ g_v $-horizontal vectors $ x,y $ with  $g_V$-horizontal extensions $ X $ and $ Y $,
\begin{align}\label{Axy}
\At^v_x y
=&
\tfrac{1}{2}[X,Y]^\top_v
+
(\T+\At)^v_{C^\sharp_v(x,y)} v
+
C^\sharp_v(\At^v_x v,y)^\top_v
-
C^\sharp_v(x,\At^v_y v)^\top_v
\,,\\ \label{Axv}
\At^v_x v
=&
\tfrac{1}{2}[X,V]^\top_v\,,
\end{align}
and $ \At^v $ satisfies the almost-antisymmetry $\At^v_x y + \At^v_y x = 2(\T+\At)^v_{C^\sharp_v(x,y)} v$. 
\end{proposition}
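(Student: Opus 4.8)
The plan is to reduce all three assertions to computations with the Koszul formula \eqref{koszul}, after recording how the Levi-Civita--Chern connection acts on the horizontal field $V$ (we take $X$ and $Y$ projectable, in accordance with our conventions; the formulas are independent of the choice of $g_V$-horizontal extensions). Under the hypothesis $\tilde\nabla^{\tilde v}\tilde V=0$, the argument in the proof of Proposition~\ref{VXYhor} gives $\nabla^v_x V-(\tilde\nabla^{\tilde v}_{\tilde x}\tilde V)^\ast_v=\At^v_x v$ at $p$, hence $\nabla^v_x V=\At^v_x v$ for every $g_v$-horizontal $x$ (in particular $\nabla^v_v V=0$), while \eqref{YTA}, \eqref{Lieconversion} and $C^\sharp_v(v,\cdot)=0$ give $\nabla^v_w V=\T^v_w v+\At^v_v w$ for vertical $w$, where $\T^v_w v$ is vertical and $\At^v_v w$ is $g_v$-horizontal by Proposition~\ref{TAProposition}(i). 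I also record the symmetry $\T^v_s u=\T^v_u s$ for vertical $s,u$ and \emph{any} admissible $v$ (slightly more than Proposition~\ref{TAProposition}(ii)): $\T^v_s u=(\nabla^v_s U)^\bot_v$ by \eqref{UTA}, and $\nabla^v_s U-\nabla^v_u S=[S,U]\vert_p$ is vertical, so its $g_v$-horizontal part vanishes.

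For \eqref{Axv} I would pair $\At^v_x v$ with an arbitrary vertical $w$. By \eqref{YTA}, $g_v(w,\At^v_x v)=g_v(w,\nabla^v_x V)$; applying \eqref{koszul} with $(e,h,b)=(x,v,w)$, every term collapses except $g_v([X,V]^\top_v,w)$: the three Cartan terms vanish because $v$ occupies a slot of $C_v$ or because $\nabla^v_v V=0$; the brackets $[W,X]$ and $[W,V]$ are vertical, hence $g_v$-orthogonal to the horizontal $v$ and $x$; $g_V(W,V)\equiv 0\equiv g_V(X,W)$; and $w(g_V(X,V))=0$ since $g_V(X,V)=\tilde g_{\tilde V}(\tilde X,\tilde V)\circ\sigma$ by Lemma~\ref{Lemma1} while $\dif\sigma(w)=0$. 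Thus $2g_v(w,\At^v_x v)=g_v([X,V]^\top_v,w)$ for all vertical $w$, and \eqref{Axv} follows from the non-degeneracy of $g_v$ on $\ver_p$.

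For the almost-antisymmetry the same computation is applied to $g_v(w,\At^v_x y)+g_v(w,\At^v_y x)$. Writing $g_v(w,\At^v_x y)=g_v(w,\nabla^v_x Y)+2C_v(w,y,\nabla^v_x V)$ by \eqref{YTA}, using \eqref{koszul} for $\nabla^v_x Y$ and for $\nabla^v_y X$, and summing, the function terms vanish (again $w(g_V(X,Y))=0$ by Lemma~\ref{Lemma1}) and all the Cartan terms carrying $\nabla^v_x V$ or $\nabla^v_y V$ cancel, leaving $g_v(\At^v_x y+\At^v_y x,w)=2C_v(x,y,\nabla^v_w V)=2g_v(C^\sharp_v(x,y),\nabla^v_w V)$. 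It remains to verify $g_v(b,\nabla^v_w V)=g_v((\T+\At)^v_b v,w)$ for $b=C^\sharp_v(x,y)$: inserting $\nabla^v_w V=\T^v_w v+\At^v_v w$ and splitting $b=b^\top_v+b^\bot_v$, this is equivalent to $g_v(b^\bot_v,\At^v_v w)=g_v(\At^v_{b^\bot_v}v,w)$ --- the skew-symmetry of $\At^v_v$ plus $\At^v_v b^\bot_v=-\At^v_{b^\bot_v}v$ from Proposition~\ref{TAProposition}(iii) --- together with $g_v(b^\top_v,\T^v_w v)=g_v(\T^v_{b^\top_v}v,w)$, which follows from the skew-symmetry of $\T^v_w$ and $\T^v_{b^\top_v}$ and the symmetry of $\T^v$ on vertical pairs recorded above. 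Non-degeneracy then gives $\At^v_x y+\At^v_y x=2(\T+\At)^v_{C^\sharp_v(x,y)}v$.

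Finally, \eqref{Axy} follows by combining this with torsion-freeness: the $g_v$-vertical part of $\nabla^v_x Y-\nabla^v_y X=[X,Y]\vert_p$, together with $(\nabla^v_x Y)^\top_v=\At^v_x y-2C^\sharp_v(y,\At^v_x v)^\top_v$ (from \eqref{YTA} and $\nabla^v_x V=\At^v_x v$), yields $\At^v_x y-\At^v_y x=[X,Y]^\top_v+2C^\sharp_v(\At^v_x v,y)^\top_v-2C^\sharp_v(x,\At^v_y v)^\top_v$; adding the almost-antisymmetry and dividing by $2$ produces \eqref{Axy}. I expect the main difficulty to be the bookkeeping in the Koszul computation for the almost-antisymmetry, and above all recognising that the Cartan correction $C_v(x,y,\nabla^v_w V)$ recombines exactly into $(\T+\At)^v_{C^\sharp_v(x,y)}v$; this relies on the symmetry $\T^v_s u=\T^v_u s$ for non-vertical $v$ and on tracking carefully which vectors are vertical and which are $g_v$-horizontal.
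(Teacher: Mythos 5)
Your proof is correct, and it runs on the same engine as the paper's: the Koszul formula \eqref{koszul}, the identities $\nabla^v_xV=\At^v_xv$, $\nabla^v_vV=0$ (from the dual Gauss formula under $\tilde\nabla^{\tilde v}\tilde V=0$) and $\nabla^v_wV=\T^v_wv+\At^v_vw$ (from \eqref{YTA} and \eqref{Lieconversion}), and the conversion of the Cartan term $C_v(x,y,\nabla^v_wV)$ into $g_v((\T+\At)^v_{C^\sharp_v(x,y)}v,w)$ via skew-symmetry and Proposition \ref{TAProposition}. The organization, however, is reversed: the paper obtains \eqref{Axy} in one stroke from the single Koszul identity \eqref{vwxy2} and then reads off \eqref{Axv} and the almost-antisymmetry as immediate specializations, whereas you prove \eqref{Axv} and the almost-antisymmetry directly --- the symmetrization in $(x,y)$ of the Koszul computation kills the bracket terms and all Cartan terms carrying $\nabla^v_xV$ or $\nabla^v_yV$ --- and then recover \eqref{Axy} by combining the symmetric part with the antisymmetric part $\At^v_xy-\At^v_yx=[X,Y]^\top_v+2C^\sharp_v(\At^v_xv,y)^\top_v-2C^\sharp_v(x,\At^v_yv)^\top_v$, which torsion-freeness and \eqref{YTA} give for free. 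The bookkeeping is a bit lighter your way, and you add one genuine point of precision: converting $C_v(x,y,\T^v_wv)$ needs the symmetry $\T^v_su=\T^v_us$ on vertical pairs for the \emph{horizontal} $v$ at hand, which the paper justifies by citing part $(ii)$ of Proposition \ref{TAProposition} even though that item is stated for vertical $v$; your one-line argument ($\T^v_su=(\nabla^v_sU)^\bot_v$ and $\nabla^v_sU-\nabla^v_uS=[S,U]$ is vertical) supplies exactly the statement actually used. The only caveat, shared with the paper's own computation, is the tacit use of projectable lifts so that $[W,X]$, $[W,Y]$, $[W,V]$ are vertical and $g_V(X,Y)$, $g_V(X,V)$ are constant along the fibers by Lemma \ref{Lemma1}; this is harmless because, as you note, replacing a $g_V$-horizontal extension by another one changes $[X,Y]$ at $p$ only by a term whose $g_v$-vertical part vanishes (again by \eqref{YTA}), so proving the formulas for projectable $g_V$-horizontal lifts proves them for all $g_V$-horizontal extensions.
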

\begin{proof}
From the Koszul formula \eqref{koszul} where $ W $ is a local vertical extension of a vertical $ w $, 
\begin{equation}\label{vwxY2}
g_v(w,\nabla^v_x Y)
=
\tfrac{1}{2} g_v([X,Y],w)
-C_v(w,y,\nabla^v_x V)
-C_v(x,w,\nabla^v_y V)
+C_v(x,y,\nabla^v_w V)
\,,
\end{equation}
and using \eqref{Lieconversion} with $Y=V$ and applying \eqref{UTA} and \eqref{YTA}, it follows that
\begin{equation*}
\nabla^v_w V
=
(\nabla^v_w V)^\top_v
+
(\nabla^v_v W)^\bot_v
=
\T^v_w v
+
\At^v_v w
\,.
\end{equation*}
Replacing the above identity in the last term  of \eqref{vwxY2},  we obtain two terms, being the first one
\begin{equation*}
C_v(x,y,\T^v_w v)
=
g_v(C^\sharp_v(x,y)^\top_v,\T^v_w v)
=
-g_v(\T^v_{C^\sharp_v(x,y)^\top_v} w,v)
=
g_v(w,\T^v_{C^\sharp_v(x,y)^\top_v} v),
\end{equation*}
where we have applied parts $(i)$ and $(ii)$ of Proposition \ref{TAProposition}, and the second one,
\begin{equation*}
C_v(x,y,\At^v_v w)
=
g_v(C^\sharp_v(x,y),\At^v_v w)
=
-g_v(\At^v_v C^\sharp_v(x,y),w)
\,,
\end{equation*}
where we have applied again part $(i)$ of Proposition \ref{TAProposition}.  Using  \eqref{YTA}  and  \eqref{vwxY2},
\begin{align}\nonumber
g_v(w,\At^v_x y)
&=
g_v(w,\nabla^v_x Y) + 2C_v(y,\nabla^v_x V,w)\\\nonumber
&= \tfrac{1}{2} g_v([X,Y],w)
+C_v(w,y,\nabla^v_x V)
-C_v(x,w,\nabla^v_y V)
+C_v(x,y,\nabla^v_w V)\\
&=\tfrac{1}{2} g_v([X,Y],w)
+C_v(w,y,\nabla^v_x V)
-C_v(x,w,\nabla^v_y V)
+g_v(w,\T^v_{C^\sharp_v(x,y)^\top_v} v)\nonumber \\
&\quad\quad\quad\quad-g_v(\At^v_v C^\sharp_v(x,y),w).\label{vwxy2}
\end{align}
Conclude \eqref{Axy} taking into account that
\begin{equation*}
\At^v_v C^\sharp_v(x,y)^\bot_v = -\At^v_{C^\sharp_v(x,y)} v
\,,
\end{equation*}
and using $\nabla^v_x V = \At^v_x v$ and $\nabla^v_y V = \At^v_y v$ by Proposition \ref{VXYhor}.  
 Finally,  \eqref{Axv} and the almost-antisymmetry of $\At$ are straightforward consequences of \eqref{Axy}. 
\end{proof}
The dual Gauss formula can be extended from Proposition \ref{VXYhor} to the following formula \eqref{GaussFormula}.
\begin{corollary}
For each horizontal $ v $, arbitrary vector $ e $ at $ p $ and arbitrary projectable vector field $ H $ with $ h = H_p $,
\begin{equation}\label{GaussFormula}
(\nabla^v_e H)^\bot_v
=
(\tilde\nabla^{\tilde{v}}_{\tilde{e}} \tilde{H})^\ast_v
+
\tilde{Q}^v_e h
+
(\T^v_e h + \At^v_e h + \At^v_h e)^\bot_v
\,.
\end{equation}
\end{corollary}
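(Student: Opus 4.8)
The plan is to deduce~\eqref{GaussFormula} from the Dual Gauss formula (Proposition~\ref{VXYhor}) and the pointwise identities of Lemma~\ref{Lemma3} by splitting $e$ and $H$ into horizontal and vertical parts. Fix a local projectable horizontal extension $V$ of $v$ (it exists by the Remark after Lemma~\ref{legendrelemma}), write $x=e^\bot_v$ and $w=e^\top_v$, and decompose $H=H^\bot+H^\top$ as anisotropic vector fields via Lemma~\ref{XT}. Put $Y:=(H^\bot)_V$ and $U:=(H^\top)_V$: then $U$ is a vertical vector field with $U_p=s:=h^\top_v$, while $Y$ is a locally $g_V$-horizontal vector field with $Y_p=y:=h^\bot_v$, projectable and projecting onto $\tilde H$ (since $\dif\sigma$ annihilates the vertical summand). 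Because $H=H^\bot+H^\top$ as anisotropic vector fields and $\dot\partial H=0$, formula~\eqref{nablaP} gives $\nabla^v_e H=\nabla^v_e H^\bot+\nabla^v_e H^\top$, so by bilinearity,
\[ (\nabla^v_e H)^\bot_v=(\nabla^v_x H^\bot)^\bot_v+(\nabla^v_w H^\bot)^\bot_v+(\nabla^v_x H^\top)^\bot_v+(\nabla^v_w H^\top)^\bot_v. \]

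Next I would compute the four summands. For $H^\bot$: by~\eqref{nablaP} and Proposition~\ref{dotPartialTopProposition} the correction term is $\dot\partial (H^\bot)_v(\cdot)=-2C^\sharp_v(y,\cdot)^\top_v$, which cancels exactly against the Cartan term in~\eqref{YTA}, leaving $\nabla^v_x H^\bot=(\nabla^v_x Y)^\bot_v+\At^v_x y$ and $\nabla^v_w H^\bot=(\nabla^v_w Y)^\bot_v+\T^v_w y$. Since $\At^v_x y$ and $\T^v_w y$ are vertical (Proposition~\ref{TAProposition}(i)), taking $\bot_v$ parts yields $(\nabla^v_x H^\bot)^\bot_v=(\nabla^v_x Y)^\bot_v$ and $(\nabla^v_w H^\bot)^\bot_v=(\nabla^v_w Y)^\bot_v$; the former equals $(\tilde\nabla^{\tilde v}_{\tilde x}\tilde Y)^\ast_v+\tilde Q^v_x y$ by Proposition~\ref{VXYhor} (applied to any projectable $g_V$-horizontal extension of $x$), and the latter equals $\At^v_y w$ by~\eqref{Lieconversion}. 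For $H^\top$: here the correction $\dot\partial (H^\top)_v(\cdot)=2C^\sharp_v(y,\cdot)^\top_v$ is vertical, hence disappears upon taking $\bot_v$, and~\eqref{UTA} gives $(\nabla^v_x H^\top)^\bot_v=\At^v_x s$ and $(\nabla^v_w H^\top)^\bot_v=\T^v_w s$ (again discarding the vertical summands of~\eqref{UTA} by Proposition~\ref{TAProposition}(i)).

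It then remains to reconcile the two sides. Since $w$ is vertical we have $\tilde e=\tilde x$ and $\tilde H=\tilde Y$, so $(\tilde\nabla^{\tilde v}_{\tilde e}\tilde H)^\ast_v=(\tilde\nabla^{\tilde v}_{\tilde x}\tilde Y)^\ast_v$; moreover $\tilde Q^v_e h=\tilde Q^v_x y$ because $\tilde Q$ vanishes as soon as one of its vector arguments is vertical (Definition~\ref{tildeQ} together with the symmetry of its defining expression); and a direct expansion using that $\T^v$ and $\At^v$ swap $g_v$-horizontal and vertical vectors (Proposition~\ref{TAProposition}(i)) shows that $(\T^v_e h+\At^v_e h+\At^v_h e)^\bot_v=\T^v_w s+\At^v_x s+\At^v_y w$, every other contribution landing in the vertical subspace and being killed by $(\cdot)^\bot_v$. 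Adding the four summands above produces precisely $(\tilde\nabla^{\tilde v}_{\tilde x}\tilde Y)^\ast_v+\tilde Q^v_x y+\At^v_y w+\At^v_x s+\T^v_w s$, which is the right-hand side of~\eqref{GaussFormula}. The main obstacle is not conceptual — everything reduces to reassembling Lemma~\ref{Lemma3} and Proposition~\ref{VXYhor} — but lies in the careful bookkeeping of the fiber-derivative corrections coming from Proposition~\ref{dotPartialTopProposition} and of which horizontal or vertical components survive each projection; in particular one must keep in mind that the anisotropic connection acts on the anisotropic fields $H^\bot,H^\top$ through~\eqref{nablaP}, not simply by restriction.
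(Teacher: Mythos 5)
Your argument is correct and is essentially the proof given in the paper: both decompose $e$ and $H$ into vertical and $g_v$-horizontal parts, control the fiber-derivative corrections via Proposition \ref{dotPartialTopProposition}, and then invoke \eqref{UTA}, \eqref{YTA}, \eqref{Lieconversion}, the Dual Gauss formula and Proposition \ref{TAProposition}(i) to reassemble the right-hand side. The only cosmetic difference is that you split both $e$ and $H$ into four terms at once and cite \eqref{Lieconversion} directly, whereas the paper first isolates $(\nabla^v_e H^\top)^\bot_v=\T^v_e(h^\top_v)+\At^v_e(h^\top_v)$ and rederives the bracket argument inside the $H^\bot$ term.
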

\begin{proof}
Consider a locally projectable horizontal extension $ V $ of $ v $. 
Now by definition
\begin{equation}\label{nablaHfirst}
(\nabla^v_e H)^\perp_v=(\nabla^v_e (H^\perp))^\perp_v+(\nabla^v_e (H^\top))^\perp_v
=(\nabla^v_e (H^\perp))^\perp_v+\T^v_e (h^\top_v)+\At^v_e (h^\top_v).
\end{equation}
Consider a $g_V$-horizontal projectable extension $X$ of $e^\perp_v$ and a vertical extension $U$ of $e^\top_v$. Then,
using \eqref{vertder}  to cancel the fiber derivative in the first equality,  Proposition \ref{VXYhor}, the fact that the Lie bracket of projectable vectors is vertical when one of them is vertical and \eqref{UTA},
\begin{align*}
(\nabla^v_e (H^\perp))^\perp_v&=(\nabla^v_e (H^\perp_V))^\perp_v
=(\nabla^v_X (H^\perp_V))^\perp_v+(\nabla^v_U (H^\perp_V))^\perp_v\\
&=
(\tilde\nabla^{\tilde v}_{\tilde e}\tilde H)^*_v +\tilde Q^v_e h+(\nabla^v_{H^\perp_V}U)^\perp_v
=(\tilde\nabla^{\tilde v}_{\tilde e}\tilde H)^*_v +\tilde Q^v_e h+\At^v_h (e^\top_v).
\end{align*}
Replacing the last identity in \eqref{nablaHfirst}, and taking into account that 
\[\T^v_e (h^\top_v)+\At^v_e (h^\top_v)+\At^v_h (e^\top_v)=(\T^v_e h + \At^v_e h + \At^v_h e)^\bot_v\]
as a consequence of part $(i)$ of Proposition \ref{TAProposition}, we obtain \eqref{GaussFormula}.
\end{proof}
We can thus remove the choice of an extension of $ v $ from the hypotheses.
\begin{proposition}[Dual Gauss formula]\label{GaussProposition}
For each horizontal $ v $, $ g_v $-horizontal $ x $ and $ y $ at $ p $ and any projectable extension $ Y $ of $ y $,
\begin{equation*}
(\nabla^v_x Y)^\bot_v
=
(\tilde\nabla^{\tilde{v}}_{\tilde{x}} \tilde{Y})^\ast_v
+
\tilde{Q}^v_x y
\,.
\end{equation*}
\end{proposition}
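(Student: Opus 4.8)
The plan is to read this off directly from the extended dual Gauss formula \eqref{GaussFormula}. That identity was established for an \emph{arbitrary} vector $e$ at $p$ and an \emph{arbitrary} projectable vector field $H$ with $h=H_p$; a $g_v$-horizontal vector $x$ is a legitimate choice of $e$, and a projectable extension $Y$ of the $g_v$-horizontal vector $y$ is a legitimate choice of $H$. So I would substitute $e=x$ and $H=Y$ into \eqref{GaussFormula}, obtaining
\begin{equation*}
(\nabla^v_x Y)^\bot_v
=
(\tilde\nabla^{\tilde{v}}_{\tilde{x}} \tilde{Y})^\ast_v
+
\tilde{Q}^v_x y
+
(\T^v_x y + \At^v_x y + \At^v_y x)^\bot_v.
\end{equation*}

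The only thing left to verify is that the last summand vanishes. By part $(i)$ of Proposition \ref{TAProposition}, for every admissible $v$ the operators $\T^v_x$, $\At^v_x$ and $\At^v_y$ carry $g_v$-horizontal vectors to vertical ones. Since $x$ and $y$ are $g_v$-horizontal, each of $\T^v_x y$, $\At^v_x y$ and $\At^v_y x$ is then vertical, so its $g_v$-orthogonal (horizontal) part is zero, and the displayed formula reduces to the assertion of the proposition. There is no ambiguity in the term $\tilde{Q}^v_x y$ either, since by Definition \ref{tildeQ} the tensor $\tilde{Q}$ only involves the $g_v$-horizontal components of its arguments.

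I do not expect any genuine obstacle here: the whole content is already carried by \eqref{GaussFormula} together with the mapping property in Proposition \ref{TAProposition}$(i)$, and the purpose of isolating this statement is simply that — in contrast with Proposition \ref{VXYhor} — it requires neither a horizontal projectable extension $V$ of $v$ nor any $g_V$-horizontality assumption on $X$, only that $Y$ be projectable. (One could alternatively redo the Koszul-formula computation of Proposition \ref{VXYhor} while dropping the $g_V$-horizontality of $X$, but routing through \eqref{GaussFormula} is shorter.)
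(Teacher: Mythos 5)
Your proposal is correct and is essentially the paper's own route: the paper states Proposition \ref{GaussProposition} as an immediate consequence of \eqref{GaussFormula} (``we can thus remove the choice of an extension of $v$''), exactly by specializing $e=x$ and $H=Y$ and observing that $(\T^v_x y + \At^v_x y + \At^v_y x)^\bot_v=0$ since these terms are vertical by Proposition \ref{TAProposition}$(i)$.
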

%
One very nicely behaved consequence of the dual Gauss formula is the relationship between geodesics of the base and horizontal geodesics.
\begin{corollary}\label{horizontalGeodesicCorollary}
A horizontal curve is a geodesic of $(\M,L)$ if and only if it is the horizontal lift of a geodesic of $(\B,\tilde L)$.
\end{corollary}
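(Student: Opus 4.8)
The plan is to reduce the statement to the pointwise comparison of the two covariant accelerations $D^{\dot\gamma}_\gamma\dot\gamma$ in $\M$ and $\tilde D^{\dot{\tilde\gamma}}_{\tilde\gamma}\dot{\tilde\gamma}$ in $\B$, where $\tilde\gamma:=\sigma\circ\gamma$. First I would observe that a horizontal curve $\gamma$ is automatically regular, since its velocity lies in $\hor\subset\Ad$ and is therefore nowhere zero, and that $\tilde\gamma$ is regular as well, because by condition $(ii)$ in the definition of a pseudo-Finsler submersion $\dif\sigma$ maps a horizontal vector to an admissible, hence nonzero, vector of $\B$; moreover, by definition, $\gamma$ is a horizontal lift of $\tilde\gamma$. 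Thus the claim reduces to showing that $\gamma$ is a geodesic of $(\M,L)$ if and only if $\tilde\gamma$ is a geodesic of $(\B,\tilde L)$.

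Fix a parameter $t_0$ and set $p=\gamma(t_0)$ and $v=\dot\gamma(t_0)$. The crucial step is to produce a \emph{projectable, horizontal} vector field $V$ on a neighbourhood of $p$ whose restriction to $\gamma$ agrees with $\dot\gamma$ near $t_0$. Since $\tilde\gamma$ is an embedded regular curve, I would extend $\dot{\tilde\gamma}$ to a vector field $\tilde V$ on a neighbourhood of $\sigma(p)$ and take its local horizontal lift $V$, as provided by the remark following Lemma~\ref{legendrelemma}. Then for each $t$ near $t_0$ the vectors $V_{\gamma(t)}$ and $\dot\gamma(t)$ are both horizontal and both project onto $\dot{\tilde\gamma}(t)$, so the local injectivity of the Legendre map (Lemma~\ref{legendrelemma}) forces $V_{\gamma(t)}=\dot\gamma(t)$. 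With such a $V$ at hand, the description of the covariant derivative along a regular curve recalled in Section~\ref{pseudo-Finsler} gives $D^{\dot\gamma}_\gamma\dot\gamma\big|_{t_0}=\nabla^v_v V$, and likewise $\tilde D^{\dot{\tilde\gamma}}_{\tilde\gamma}\dot{\tilde\gamma}\big|_{t_0}=\tilde\nabla^{\tilde v}_{\tilde v}\tilde V$.

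It then remains to compute $\nabla^v_v V$. Decomposing it with \eqref{YTA}, its vertical part is $\At^v_v v$, which vanishes by part $(iii)$ of Proposition~\ref{TAProposition}, while the remaining fiber-derivative term vanishes because $C^\sharp_v(v,\cdot)=0$ by homogeneity; hence $\nabla^v_v V=(\nabla^v_v V)^\bot_v$. The dual Gauss formula of Proposition~\ref{GaussProposition}, applied with $x=y=v$ and the projectable extension $V$, yields $(\nabla^v_v V)^\bot_v=(\tilde\nabla^{\tilde v}_{\tilde v}\tilde V)^{*}_v+\tilde Q^v_v v$, and $\tilde Q^v_v v=0$ again by homogeneity together with $\At^v_v v=0$. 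Therefore $D^{\dot\gamma}_\gamma\dot\gamma\big|_{t_0}=\big(\tilde D^{\dot{\tilde\gamma}}_{\tilde\gamma}\dot{\tilde\gamma}\big|_{t_0}\big)^{*}_v$. Since the horizontal lift $(\cdot)^{*}_v$ is injective, $D^{\dot\gamma}_\gamma\dot\gamma$ vanishes identically if and only if $\tilde D^{\dot{\tilde\gamma}}_{\tilde\gamma}\dot{\tilde\gamma}$ does, which, together with the reduction from the first paragraph, proves the corollary.

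The step I expect to be the real obstacle is the construction of the auxiliary field $V$ in the second paragraph: the simultaneous requirement that $V$ be projectable and horizontal \emph{and} that it genuinely restrict to $\dot\gamma$ along $\gamma$, so that $\nabla^v_v V$ really computes $D^{\dot\gamma}_\gamma\dot\gamma$. This is precisely where the uniqueness of horizontal lifts, hence the local invertibility of the Legendre map, is indispensable. Once $V$ is available, the rest is a short bookkeeping exercise combining Proposition~\ref{TAProposition}$(iii)$, the vanishing of the Cartan terms on the flagpole $v$, and Proposition~\ref{GaussProposition}.
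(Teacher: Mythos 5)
Your proof is correct and follows essentially the same route as the paper: a projectable horizontal extension of $\dot\gamma$, the identity \eqref{YTA} together with $\At^v_v v=0$ (Proposition \ref{TAProposition}$(iii)$) and $C^\sharp_v(v,\cdot)=0$ to kill the vertical part, and the dual Gauss formula (Proposition \ref{GaussProposition}) with $\tilde Q^v_v v=0$ for the horizontal part. The only difference is presentational: you package both implications into the single identity $D^{\dot\gamma}_\gamma\dot\gamma=\bigl(\tilde D^{\dot{\tilde\gamma}}_{\tilde\gamma}\dot{\tilde\gamma}\bigr)^\ast_{\dot\gamma}$ and spell out, via Lemma \ref{legendrelemma}, the construction of the horizontal projectable extension $V$ of $\dot\gamma$ that the paper only mentions parenthetically.
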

\begin{proof}
By Proposition \ref{GaussProposition}, the projection of a horizontal curve $ \gamma $ satisfying  $ D^{\dot\gamma}_\gamma \dot\gamma = 0 $  satisfies the geodesic equation (consider for example a horizontal projectable vector field $V$ that locally extends $\dot\gamma$). Conversely, the same proposition allows us to deduce $ (D^{\dot\gamma}_{\gamma} \dot\gamma)^\bot_{\dot\gamma} = 0 $ for the horizontal lift of a geodesic, while by Lemma \ref{Lemma3} 
\begin{equation*}
(D^{\dot\gamma}_{\gamma} \dot\gamma)^\top_{\dot\gamma}
=
\At^{\dot\gamma}_{\dot\gamma} \dot\gamma
-
2C^\sharp_{\dot\gamma}(\dot\gamma,D^{\dot\gamma}_{\gamma} \dot\gamma)^\top_{\dot\gamma}
\,,
\end{equation*}
where the last term is zero by homogeneity, and the first term to the right is zero by part $(iii)$ of Proposition \ref{TAProposition}.
\end{proof}
\begin{corollary}\label{horizontalEverywhereCorollary}
If a geodesic of $(\M,L)$ is horizontal at one instant, then it is horizontal everywhere.
\end{corollary}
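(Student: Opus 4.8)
The plan is to deduce this from Corollary \ref{horizontalGeodesicCorollary} by a connectedness argument on the domain of the geodesic, using that geodesics are uniquely determined by their initial position and velocity and that horizontal lifts of vectors are unique (Lemma \ref{legendrelemma}). Let $\gamma\colon J\to\M$ be a geodesic of $(\M,L)$, defined on an interval $J$, which is horizontal at some $t_0\in J$, and set $I=\{t\in J:\dot\gamma(t)\in\hor\}$. Since $J$ is connected and $t_0\in I$, it is enough to show that $I$ is both open and closed in $J$.

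Closedness is immediate: the velocity of a geodesic is admissible at every instant (this is required for the geodesic equation $D^{\dot\gamma}_\gamma\dot\gamma=0$ to be meaningful), so $\dot\gamma(J)\subset\Ad$; and $\hor$ is, by its very definition through the condition $g_v(v,w)=0$ for all vertical $w$, a relatively closed subset of $\Ad$. Hence $I=\dot\gamma^{-1}(\hor)$ is closed in $\dot\gamma^{-1}(\Ad)=J$.

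For openness, fix $t_1\in I$ and put $q=\sigma(\gamma(t_1))$ and $\tilde v=\dif\sigma(\dot\gamma(t_1))$, which lies in $\tilde{\Ad}$ because $\dot\gamma(t_1)$ is horizontal, so that $\tilde L(\tilde v)=L(\dot\gamma(t_1))$ is defined. Let $\tilde\beta$ be the geodesic of $(\B,\tilde L)$ with $\tilde\beta(t_1)=q$ and $\dot{\tilde\beta}(t_1)=\tilde v$, and, using the remark following Lemma \ref{legendrelemma}, let $\beta$ be a horizontal lift of $\tilde\beta$ defined near $t_1$ with $\beta(t_1)=\gamma(t_1)$. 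Then $\dot\beta(t_1)$ is the horizontal lift of $\tilde v$; since $\dot\gamma(t_1)$ is also horizontal and projects to $\tilde v$, the uniqueness part of Lemma \ref{legendrelemma} gives $\dot\beta(t_1)=\dot\gamma(t_1)$. By Corollary \ref{horizontalGeodesicCorollary}, $\beta$ is a geodesic of $(\M,L)$, and as $\gamma$ and $\beta$ are geodesics agreeing in position and velocity at $t_1$, uniqueness of solutions of the geodesic equation yields $\gamma=\beta$ on a neighbourhood of $t_1$; in particular $\dot\gamma$ is horizontal there, so that neighbourhood lies in $I$.

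The one point requiring a little care is the identification $\dot\beta(t_1)=\dot\gamma(t_1)$, which rests on the local injectivity of the Legendre map packaged in Lemma \ref{legendrelemma}, together with the (implicit but needed) fact that a horizontal vector always projects into the domain of $\tilde L$; once this is granted, the argument is a routine application of ODE uniqueness, and it makes no use of connectedness of $\Ad$.
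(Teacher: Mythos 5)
Your proof is correct and follows essentially the same route as the paper: compare $\gamma$ with a horizontal lift of the base geodesic determined by $\dif\sigma(\dot\gamma(t_1))$, note that this lift is a geodesic of $(\M,L)$ by Corollary \ref{horizontalGeodesicCorollary}, and conclude by uniqueness of geodesics together with uniqueness of horizontal lifts (Lemma \ref{legendrelemma}). The only difference is that you wrap this in an open-closed connectedness argument to cope with the fact that the horizontal lift is a priori only defined locally, a domain issue the paper's one-line proof passes over silently, so your version is, if anything, slightly more careful.
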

\begin{proof}
Assume that $\gamma$ is horizontal at $t_0\in [a,b]$. Then consider the lift $\beta$ of the geodesic with initial velocity 
$\dif\sigma(\dot\gamma(t_0))$  (recall Corollary \ref{horizontalGeodesicCorollary}).  As both $\gamma$ and $\beta$ have the same initial velocity, they must coincide and $\gamma$ is horizontal everywhere.
\end{proof}
 The last two corollaries were obtained in \cite[Theorems 3.1 and 5.3]{2001AlDur} for Finsler metrics  using two different methods, the minimizing property of geodesics (which holds only in the positive definite case) and symplectic reduction, which can be also applied to pseudo-Finsler submersions. 
\subsection{Covariant derivatives of T and A}\label{covderTA}

Note that skew-symmetry from Proposition \ref{TAProposition} implies that $ \nabla \T $ and $ \nabla \At $ are skew-symmetric. 
Algebraicity of $ \nabla \T $ and $ \nabla \At $ can be imported intact from \cite{1966ONeill} as follows.
\begin{lemma}\label{Lemma4}
For each admissible $ v $, vertical $ u,w $ and $ g_v $-horizontal $ x,y $ and arbitrary $ e $ at $ p $,
\begin{align*}
(\nabla_w \At)^v_ue
&=
-\At^v_{T^v_w u}e
\,, &\quad
(\nabla_x \At)^v_we
&=
-\At^v_{\At^v_x w}e
\,, \\
(\nabla_w \T)^v_ye
&=
-\T^v_{\T^v_w y}e
\,, &\quad
(\nabla_x \T)^v_ye
&=
-\T^v_{\At^v_x y}e
\,.
\end{align*}
More succinctly, for every $h$ at $p$
\begin{equation*}
(\nabla_h \At)^v_w e = -\At^v_{(\T+\At)^v_hw}e
\,, \quad
(\nabla_h \T)^v_x e = -\T^v_{(\T+\At)^v_hx}e
\,.
\end{equation*}
\end{lemma}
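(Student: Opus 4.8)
The plan is to expand each of the four identities directly from the definition of the covariant derivative of an anisotropic tensor, i.e. from \eqref{covder} together with \eqref{nablaP}, and then to reduce every resulting term by means of Lemma \ref{Lemma3} and Proposition \ref{dotPartialTopProposition}. Fix $p\in\M$, $v\in\Ad_p$, a locally admissible extension $V$ of $v$, and extend the vector arguments to vector fields of the appropriate type: vertical vector fields $U,W$ for the vertical vectors, locally $g_V$-horizontal vector fields $X,Y$ for the $g_v$-horizontal ones, and an arbitrary vector field $E$ for the arbitrary vector $e$; since $\nabla\T$ and $\nabla\At$ are anisotropic tensors, the outcome does not depend on these choices. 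For a $(1,2)$-anisotropic tensor the formula \eqref{covder} reads, for instance, $(\nabla_E\T)^v_BH=\nabla^v_E(\T(B,H))-\T^v_{\nabla^v_EB}H-\T^v_B\nabla^v_EH$, where $\T(B,H)$ denotes the anisotropic vector field $v'\mapsto\T^{v'}_{B}H$, and similarly for $\At$.

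Consider first the two identities for $\nabla\At$, in which a vertical vector field ($U$ in $(\nabla_w\At)^v_ue$, $W$ in $(\nabla_x\At)^v_we$) sits in the second slot; write $\bullet$ for the differentiation direction, equal to $w$ in the first case and to $x$ in the second. The crucial point is that a vertical vector has zero $g_{v'}$-horizontal component for \emph{every} admissible $v'$, so, since $\At^{v'}_b=\chi^{v'}_{b^\perp_{v'}}$, the anisotropic vector field $\At(U,E)$ (resp. $\At(W,E)$) vanishes identically and hence so does its covariant derivative; for the same reason $\At^v_U=\At^v_{U_p}=0$, so the term $\At^v_U\nabla^v_\bullet E$ also drops. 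Only $\At^v_{\nabla^v_\bullet U}E$ (resp. $\At^v_{\nabla^v_\bullet W}E$) survives, and \eqref{UTA} shows that the $g_v$-horizontal part of $\nabla^v_wU$ is exactly $\T^v_wu$ and that of $\nabla^v_xW$ is $\At^v_xw$ (both lie in $\hor_p$ by part $(i)$ of Proposition \ref{TAProposition}); as $\At^v_b$ depends only on $b^\perp_v$, this gives precisely $-\At^v_{\T^v_wu}e$ and $-\At^v_{\At^v_xw}e$.

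For the two $\nabla\T$ identities one meets the anisotropy phenomenon emphasised in the introduction: a $g_V$-horizontal vector field $Y$ need not be $g_{v'}$-horizontal for $v'\neq V$, so the anisotropic vector field $\T(Y,E)$ vanishes only along $V$, not identically. The term $\T^v_Y\nabla^v_\bullet E$ still disappears because $Y_p=y$ is $g_v$-horizontal, but now two correction terms appear, which I must track carefully. First, in $\T^v_{\nabla^v_\bullet Y}E$, formula \eqref{YTA} gives the vertical part of $\nabla^v_wY$ as $\T^v_wy-2C^\sharp_v(y,\nabla^v_wV)^\top_v$ (resp. as $\At^v_xy-2C^\sharp_v(y,\nabla^v_xV)^\top_v$ for $\nabla^v_xY$). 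Second, the anisotropic covariant derivative of $\T(Y,E)$ is, by \eqref{nablaP}, equal to $-\dot\partial(\T(Y,E))_v(\nabla^v_\bullet V)$, since the ordinary vector field $p'\mapsto\T^{V_{p'}}_{Y_{p'}}E_{p'}$ vanishes identically; unwinding this fiber derivative by the Leibniz rule, using that the $g_v$-horizontal part of $Y_p$ is zero and that $(\dot\partial y^\top)_v=2C^\sharp_v(y,\cdot)^\top_v$ by Proposition \ref{dotPartialTopProposition}, gives $-2\T^v_{C^\sharp_v(y,\nabla^v_\bullet V)^\top_v}E$. These two $C^\sharp$-terms are equal and of opposite sign, so they cancel and leave $-\T^v_{\T^v_wy}e$ and $-\T^v_{\At^v_xy}e$.

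I expect this bookkeeping — verifying that the $C^\sharp$-corrections coming from \eqref{YTA} cancel exactly against the one produced by $\dot\partial(Y^\top)$ through \eqref{nablaP} — to be the only delicate step; conceptually the proof follows O'Neill's. Finally, the succinct formulas are obtained by decomposing $h=h^\top_v+h^\perp_v$, using linearity of $(\nabla\T)^v$ and $(\nabla\At)^v$ in that argument, applying the four identities already proved to the vertical direction $h^\top_v$ and the $g_v$-horizontal direction $h^\perp_v$, and recombining via $\T^v_{h^\top_v}=\T^v_h$, $\At^v_{h^\perp_v}=\At^v_h$ and the linearity of $\T^v$ and $\At^v$ in their first argument, since $\T^v_h+\At^v_h=(\T+\At)^v_h=\chi^v_h$.
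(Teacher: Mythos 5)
Your proof is correct, and it rests on the same ingredients as the paper's (the Leibniz rule for $\nabla\T$, $\nabla\At$ via \eqref{covder}--\eqref{nablaP}, the vanishing $\At^v_u=0$ for vertical $u$ and $\T^v_x=0$ for $g_v$-horizontal $x$, and Lemma \ref{Lemma3}), but it takes a slightly different route in the bookkeeping. The paper chooses the locally admissible extension $V$ so that $\nabla^vV=0$; with this normalization the Cartan-tensor correction in \eqref{YTA} and the fiber-derivative correction in \eqref{nablaP} both vanish outright, and the four identities drop out in one line. You instead work with an arbitrary admissible extension and verify that these two corrections cancel against each other: the term $+2\T^v_{C^\sharp_v(y,\nabla^v_\bullet V)^\top_v}e$ arising from the vertical part of $\nabla^v_\bullet Y$ in \eqref{YTA} is exactly offset by the term $-2\T^v_{C^\sharp_v(y,\nabla^v_\bullet V)^\top_v}e$ produced by the fiber derivative of $\T(Y,E)$ through \eqref{nablaP}, \eqref{vertder} and Proposition \ref{dotPartialTopProposition}. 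This is a genuine verification that the paper's normalization hides, and it also shows the result does not depend on the availability of an extension with $\nabla^vV=0$ (which the paper takes for granted); the price is a longer computation, and the paper's normalized extension is reused profitably in Lemma \ref{Lemma5} and in the curvature computations. One small slip in your write-up: in the fiber-derivative step, the term in which $\T$ (equivalently $\chi$) is differentiated with its first argument frozen vanishes because the $g_v$-\emph{vertical} part $(Y_p)^\top_v$ is zero, not the ``$g_v$-horizontal part of $Y_p$'' as you wrote; the ensuing computation and the final cancellation are nonetheless correct, as is your decomposition $h=h^\top_v+h^\perp_v$ for the succinct formulas.
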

\begin{proof}
For a locally admissible extension $ V $ of $ v $ satisfying $ \nabla^v V = 0 $, a vertical extension $U$ of $u$ and a $g_V$-horizontal extension $Y$ of $y$, all the identities are a direct consequence of definitions taking into account that $\At^v_u=0$ for any vertical $u$ and $\T^v_x=0$ for any $g_v$-horizontal $x$ and the identities of Lemma \ref{Lemma3}.
\end{proof}
\begin{lemma}\label{Lemma5}
For each admissible $ v $, vertical $ u,w $ and $ g_v $-horizontal $ x $ and arbitrary $e$ and $h$ at $ p $,
\begin{align*}
&
((\nabla_w \At)^v_e u)^\top_v
=
(\T^v_w \At^v_e - \At^v_e \T^v_w) u
\,, \quad
((\nabla_x \At)^v_e u)^\top_v
=
(\At^v_x \At^v_e - \At^v_e \At^v_x) u
\,, \\
&
((\nabla_w \T)^v_e u)^\top_v
=
(\T^v_w \T^v_e - \T^v_e \T^v_w) u
\,, \quad
((\nabla_x \T)^v_e u)^\top_v
=
(\At^v_x \T^v_e - \T^v_e \At^v_x) u
\,,
\end{align*}
and for each $ g_v $-horizontal $ y $ at $ p $
\begin{align*}
&
((\nabla_w \At)^v_e y)^\bot_v
=
(\T^v_w \At^v_e - \At^v_e \T^v_w) y
\,, \quad
((\nabla_x \At)^v_e y)^\bot_v
=
(\At^v_x \At^v_e -\At^v_e \At^v_x) y
\,, \\
&
((\nabla_w \T)^v_e y)^\bot_v
=
(\T^v_w \T^v_e - \T^v_e \T^v_w) y
\,, \quad
((\nabla_x \T)^v_e y)^\bot_v
=
(\At^v_x \T^v_e - \T^v_e \At^v_x) y
\,.
\end{align*}
More synthetically, 
\begin{align}\label{nablaT+Au}
((\nabla_e (\T+\At))^v_h u)^\top_v
& =
(\T+\At)^v_e (\T+\At)^v_h u - (\T+\At)^v_h (\T+\At)^v_e u
\,, \\
\label{nablaT+Ay}((\nabla_e (\T+\At))^v_h y)^\bot_v
& =
(\T+\At)^v_e (\T+\At)^v_h y - (\T+\At)^v_h (\T+\At)^v_e y
\,.
\end{align}
\end{lemma}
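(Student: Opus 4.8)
The plan is to follow the strategy of Lemma \ref{Lemma4}: fix a locally admissible extension $V$ of $v$ with $\nabla^v V = 0$, so that fiber-derivative correction terms in \eqref{covder}--\eqref{fiberder} and in \eqref{nablaP} vanish at $p$, and reduce each identity to a straightforward manipulation with the product rule for $\nabla$. Concretely, to compute $(\nabla_e(\T+\At))^v_h u$ I would write, using \eqref{covder}, $(\nabla_e(\T+\At))^v_h u = \nabla^v_e\big((\T+\At)^v_H U\big) - (\T+\At)^v_{\nabla^v_e H}U - (\T+\At)^v_H\nabla^v_e U$, where $H$ and $U$ are extensions of $h$ and $u$. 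The point of choosing $V$ with $\nabla^v V=0$ is that $(\T+\At)^v$ applied along $V$ agrees with the affine-connection computation, and the anisotropic covariant derivative of $v\mapsto (\T+\At)_v(\cdots)$ reduces to $E_{\pi(v)}$ acting on the corresponding $\nabla^V$-expression by \eqref{fiberder}.

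The key algebraic input is Lemma \ref{Lemma3}: for a vertical vector field $U$, $\nabla^v_e U = (\T+\At)^v_e u + (\nabla^v_e U)^\top_v$, and for a $g_V$-horizontal $Y$, $\nabla^v_e Y = (\nabla^v_e Y)^\bot_v + (\T+\At)^v_e y - 2C^\sharp_v(y,\nabla^v_e V)^\top_v$, the last term dropping because $\nabla^v V=0$. Now I expand $(\T+\At)^v_H U$ — this is a vertical vector field up to the horizontal piece $(\T+\At)^v_H U$ lands in $\ver$ when $U$ is vertical by part $(i)$ of Proposition \ref{TAProposition} — and apply $\nabla^v_e$ to it using Lemma \ref{Lemma3} again: $\nabla^v_e\big((\T+\At)^v_H U\big) = (\T+\At)^v_e\big((\T+\At)^v_h u\big) + \big(\nabla^v_e((\T+\At)^v_H U)\big)^\top_v$. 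Substituting everything back and taking the $(\cdot)^\top_v$-part, the terms $(\nabla^v_e U)^\top_v$, $(\nabla^v_e H)^\top_v$, etc., that are not of the form $(\T+\At)\circ(\T+\At)$ are killed: when $U$ is vertical, $(\T+\At)^v_H(\nabla^v_e U)^\top_v$ has its vertical argument and so its $(\cdot)^\top_v$-part vanishes by Proposition \ref{TAProposition}$(i)$, and the leftover $\big(\nabla^v_e((\T+\At)^v_H U)\big)^\top_v - (\T+\At)^v_H(\text{vertical})^\top_v$ collapses precisely to $-(\T+\At)^v_h\big((\T+\At)^v_e u\big)$ after using Lemma \ref{Lemma3} once more on the vertical field $(\T+\At)^v_H U$. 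This gives \eqref{nablaT+Au}; the horizontal analogue \eqref{nablaT+Ay} is identical with the roles of $(\cdot)^\top_v$ and $(\cdot)^\bot_v$ swapped and $Y$ in place of $U$. The four displayed special cases then follow by specialising $e,h$ to be vertical or $g_v$-horizontal and recalling $\At^v_u=0$ for vertical $u$, $\T^v_x=0$ for $g_v$-horizontal $x$.

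The main obstacle I anticipate is bookkeeping the $(\cdot)^\top_v$ versus $(\cdot)^\bot_v$ projections carefully: unlike the Riemannian case, $(\T+\At)^v_H U$ is only vertical after projection, and the fiber derivatives $(\dot\partial x^\top)_v = 2C^\sharp_v(x,\cdot)^\top_v$ from Proposition \ref{dotPartialTopProposition} would re-enter if one does not impose $\nabla^v V=0$; with that normalisation, however, all such terms vanish at $p$, and since both sides of the claimed identities are tensorial (they are values of honest anisotropic tensors evaluated at $p$), establishing them at $p$ for this convenient $V$ suffices. The remaining check is purely formal cancellation, exactly parallel to \cite[Lemma 4]{1966ONeill}, so no genuinely new difficulty arises beyond the anisotropic projection gymnastics.
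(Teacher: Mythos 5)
Your overall strategy is the paper's: fix a locally admissible extension $V$ of $v$ with $\nabla^v V=0$, expand $(\nabla_e(\T+\At))^v_h u$ by the product rule for anisotropic tensors, and use Lemma \ref{Lemma3} together with Proposition \ref{TAProposition}$(i)$ to identify the surviving projections. However, the central step as you have written it fails. You assert that $(\T+\At)^V_H U$ is vertical when $U$ is vertical, citing Proposition \ref{TAProposition}$(i)$; that proposition says exactly the opposite: $\T^v_e$ and $\At^v_e$ map vertical vectors to $g_v$-horizontal ones, so $(\T+\At)^V_H U$ is a $g_V$-horizontal field. Consequently you apply the wrong case of Lemma \ref{Lemma3}: your displayed identity $\nabla^v_e\big((\T+\At)^V_H U\big)=(\T+\At)^v_e\big((\T+\At)^v_h u\big)+\big(\nabla^v_e((\T+\At)^V_H U)\big)^\top_v$ is the decomposition appropriate to a \emph{vertical} field, and with the $(\cdot)^\top_v$ on the right it is circular (projecting both sides onto the vertical subspace would force $(\T+\At)^v_e(\T+\At)^v_h u=0$). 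The correct statement, from \eqref{YTA} with $\nabla^v V=0$ (or, as the paper does, by differentiating $g_V((\T+\At)^V_H U,W)=0$ along $e$ and using skew-symmetry), is that the vertical part of $\nabla^v_e\big((\T+\At)^V_H U\big)$ equals $(\T+\At)^v_e(\T+\At)^v_h u$, i.e.\ this term produces the \emph{positive} commutator term.

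Your bookkeeping of the remaining terms is then also off: you attribute the ``leftover'' $-(\T+\At)^v_h\big((\T+\At)^v_e u\big)$ to the first product-rule term, but that negative term actually comes from the third term $-(\T+\At)^v_h\nabla^v_e U$, whose vertical part is $-(\T+\At)^v_h(\T+\At)^v_e u$ because $(\nabla^v_e U)^\bot_v=(\T+\At)^v_e u$ by \eqref{UTA}, while $(\T+\At)^v_h$ applied to $(\nabla^v_e U)^\top_v$ is horizontal and dies under $(\cdot)^\top_v$, as you correctly note. Once these two points are corrected (first term gives the positive commutator term, third term gives the negative one, and the middle term $(\T+\At)^v_{\nabla^v_e H}u$ is $g_v$-horizontal, hence killed by $(\cdot)^\top_v$), your argument becomes the paper's proof of \eqref{nablaT+Au}; the horizontal identity \eqref{nablaT+Ay} is handled symmetrically, where now $(\T+\At)^V_H Y$ is a vertical field and the Cartan correction in \eqref{YTA} disappears because $\nabla^v V=0$, and the four displayed special cases follow from $\At^v_w=0$ and $\T^v_x=0$ as you say. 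So the gap is not the plan but the misreading of Proposition \ref{TAProposition}$(i)$ and the resulting misassignment of the two commutator terms.
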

\begin{proof}
Consider a locally admissible extension $ V $ of $ v $ satisfying $ \nabla^v V = 0 $, an arbitrary extension $ E $ of $ e $ and any vertical extension $ U $ of $ u $.  Then 
\begin{equation*}
(\nabla_h \T)^v_e u
=
\nabla^v_h (\T^V_E U)
-
\T^v_{\nabla^v_h E} u
-
\T^v_e \nabla^v_h U
\,,
\end{equation*}
whose second term will vanish when taking the vertical part, while the vertical part of the third term is by part $(i)$ of Proposition \ref{TAProposition} and \eqref{UTA}, 
\begin{equation*}
-(\T^v_e \nabla^v_h U)^\top_v=-\T^v_e (\nabla^v_h U)^\perp_v
=
-\T^v_e (\T+\At)^v_h U
\,.
\end{equation*}
The vertical part of the first term may also be computed into a similar form as follows. For any vertical extension $ W $ of $ w $, by Lemma \ref{Lemma3}, noting that the product $ g_V((\T+\At)^V_E U,W) $ is zero by properties of $ \T $ and $ \At $, and using also \eqref{UTA} and skew-symmetry of $\T$ and $\At$,
\begin{multline*}
g_v(\nabla^v_h (\T^V_E U),w)
=
-
g_v(	\T^v_e u,\nabla^v_h W)
-
2C_v(\T^v_e u,w,\nabla^v_h V)
\\
=
-g_v((\T^v_e u,(\T+\At)^v_h w)
=
g_v((\T+\At)^v_h \T^v_e u,w)
\,.
\end{multline*}
It follows then, putting together the last three identities, that 
\[((\nabla_h \T)^v_e u)^\top_v=-\T^v_e (\T+\At)^v_h u+(\T+\At)^v_h \T^v_e u.\]
With analogous computations, we obtain other three identities
\begin{align*}
((\nabla_h \At)^v_e u)^\top_v&=-\At^v_e (\T+\At)^v_h u+(\T+\At)^v_h \At^v_e u,\\
((\nabla_h \T)^v_e y)^\bot_v&=-\T^v_e (\T+\At)^v_h y+(\T+\At)^v_h \T^v_e y,\\
((\nabla_h \At)^v_e y)^\bot_v&=-\At^v_e (\T+\At)^v_h y+(\T+\At)^v_h \At^v_e y.
\end{align*}
All the identities in Lemma \ref{Lemma5} follow from the above four, by taking into account that $\T_x=\At_w=0$.
\end{proof}

\section{Fundamental equations}\label{fundeq}

Our aim is to generalise the fundamental equations $ \lbrace 0 \rbrace $ to $ \lbrace 4 \rbrace $ in \cite{1966ONeill}. To this end, let us compute the curvature tensor in terms of the following curvature tensors.
\begin{definition}\label{RTopBotDefinition} 
Let $ R^\top $ and $ R^\bot $ be given for each admissible $ v $  and tangent vectors $ b,e,h $ at $p$ with extensions $ B,E,H $ by
\begin{equation*}
R^\top_v(b,e)w
=
\left(
\nabla^v_b (\nabla_E H^\top)^\top
-
\nabla^v_e (\nabla_B H^\top)^\top
-
\nabla^v_{[B,E]} H^\top
\right)^\top_v
\,,
\end{equation*}
and replacing the vertical operator $ \text{ }^\top $ by the horizontal operator $ \text{ }^\bot $
\begin{equation*}
R^\bot_v(b,e)h
=
\left(
\nabla^v_b (\nabla_E H^\bot)^\bot
-
\nabla^v_e (\nabla_B H^\bot)^\bot
-
\nabla^v_{[B,E]} H^\bot
\right)^\bot_v
\,.
\end{equation*}
\end{definition}
One can easily checked that $R^\top$ and $R^\bot$ are anisotropic tensors by the same method as the proof that the curvature tensor of a Riemannian manifold is indeed a tensor.
\begin{proposition}\label{RTopRBotProposition}
With the same notation, for $v$ admissible with a locally admissible extension $V$ and $w$ vertical with vertical extension $W$,
\begin{multline*}
R^\top_v(b,e) w
=
\left(
\nabla^v_b (\nabla^V_E W)^\top_V
-
\nabla^v_e (\nabla^V_B W)^\top_V
-
\nabla^v_{[B,E]} W
\right)^\top_v
\\
-
2C^\sharp_v(\T^v_ew,\nabla^v_bV)^\top_v
+
2C^\sharp_v(\T^v_bw,\nabla^v_eV)^\top_v
\\
-
P_v(e,w,\nabla^v_bV)^\top_v
+
P_v(b,w,\nabla^v_eV)^\top_v
\,,
\end{multline*}
and
\begin{multline*}
R^\bot_v(b,e)h
=
\left(
\nabla^v_b (\nabla^V_E H^\bot_V)^\bot_V
-
\nabla^v_e (\nabla^V_B H^\bot_V)^\bot_V
-
\nabla^v_{[B,E]} H^\bot_V
\right)^\bot_v
\\
-
P_v(e,h^\bot_v,\nabla^v_bV)^\bot_v
+
P_v(b,h^\bot_v,\nabla^v_eV)^\bot_v
\\
+
2 (\T^v_e+\At^v_e) C^\sharp_v(h^\bot_v,\nabla^v_bV)^\top_v
-
2 (\T^v_b+\At^v_b) C^\sharp_v(h^\bot_v,\nabla^v_eV)^\top_v
\,.
\end{multline*}
\end{proposition}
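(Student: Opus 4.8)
The plan is to imitate the derivation, recalled in Section \ref{pseudo-Finsler}, of the formula $R_V(E,H)B=R^V(E,H)B-P_V(H,B,\nabla^V_EV)+P_V(E,B,\nabla^V_HV)$, the only difference being that in Definition \ref{RTopBotDefinition} the field differentiated from the outside, namely $(\nabla_EH^\top)^\top$ (resp.\ $(\nabla_EH^\bot)^\bot$), is a genuinely $\Ad$-anisotropic vector field rather than a classical one. Hence, in addition to the derivative $P$ of $\nabla$, one also picks up the fiber derivatives of this field, which are governed by Proposition \ref{dotPartialTopProposition} and by \eqref{dotPartialNablaMathcalX}. Both displayed identities come out of the same three steps, so I describe the vertical one and indicate only what changes for the horizontal one.

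First, for $R^\top$, fix vertical extensions and set $\mathcal Z^E:=(\nabla_EW)^\top$, an anisotropic vector field with values in $\ver$, and likewise $\mathcal Z^B$; since $W$ is everywhere vertical, $W^\top=W$, so the summand $\nabla^v_{[B,E]}W^\top=\nabla^v_{[B,E]}W$ needs no conversion. Applying \eqref{nablaP} to $\nabla^v_b\mathcal Z^E$ replaces it by $\nabla^v_b(\mathcal Z^E)_V-(\dot\partial\mathcal Z^E)_v(\nabla^v_bV)$, where $(\mathcal Z^E)_V=(\nabla^V_EW)^\top_V$ is precisely the vector field appearing in the statement. By the Leibniz rule, the fiber derivative of $\mathcal Z^E=(\nabla_EW)^\top$ splits into (a) the fiber derivative of the vertical projector applied to $\nabla^v_EW$, which by Proposition \ref{dotPartialTopProposition} equals $2C^\sharp_v((\nabla^v_EW)^\bot_v,\,\cdot\,)^\top_v$, and (b) the vertical part of $(\dot\partial(\nabla_EW))_v$, which by \eqref{dotPartialNablaMathcalX} with $\mathcal X=W$ (so $\dot\partial\mathcal X=0$ and only the $P$-term survives) reduces to $P_v(e,w,\cdot)$. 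Identifying $(\nabla^v_EW)^\bot_v$ through \eqref{UTA} of Lemma \ref{Lemma3} and gathering the two corrections (from the $b$- and from the $e$-derivative, with opposite signs) yields the $C^\sharp$- and $P$-terms of the first identity; the outer $\top$-projection discards the horizontal part of $\nabla^v_b(\mathcal Z^E)_V$ and leaves the correction terms, which are already vertical, untouched.

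For $R^\bot$ one repeats the argument with $\mathcal Z^E:=(\nabla_EH^\bot)^\bot$. The new point is that $H^\bot$ is now a genuinely anisotropic field with nonzero fiber derivative $\dot\partial(H^\bot)_v=-2C^\sharp_v(h^\bot_v,\cdot)^\top_v$ by Proposition \ref{dotPartialTopProposition}; consequently, in \eqref{dotPartialNablaMathcalX} applied to $\mathcal X=H^\bot$ the two extra terms $(\nabla_e(\dot\partial\mathcal X))_v$ and $-(\dot\partial\mathcal X)_v(P_v(e,v,\cdot))$ survive, and rewriting $\nabla^v$ applied to the vertical field $C^\sharp_v(h^\bot_v,\cdot)^\top_v$ via \eqref{UTA} turns them, after gathering, into the $(\T+\At)^v_eC^\sharp_v(h^\bot_v,\nabla^v_bV)^\top_v$ terms, while the $P$-contribution of \eqref{dotPartialNablaMathcalX} supplies the $P_v(e,h^\bot_v,\nabla^v_bV)^\bot_v$ terms. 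Here the outer $\bot$-projection is what removes the spurious $2C^\sharp_v(h^\bot_v,\nabla^v_{[B,E]}V)^\top_v$ that \eqref{nablaP} attaches to the bracket term, so that only $\nabla^v_{[B,E]}H^\bot_V$ remains.

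The difficulty is organisational rather than conceptual: most correction terms are produced as a sum of a vertical and a horizontal piece, and one has to keep straight which piece is annihilated by the final projection and which one survives; in particular \eqref{dotPartialNablaMathcalX} must be applied to the correct inner field ($W$, resp.\ $H^\bot$), and one must not forget that the projectors $(\cdot)^\top$ and $(\cdot)^\bot$ depend on $v$, so that every occurrence of $\dot\partial$ on a projected field contributes a Cartan term through Proposition \ref{dotPartialTopProposition}.
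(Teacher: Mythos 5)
Your argument is correct and follows essentially the same route as the paper's proof: convert the outer derivatives via \eqref{nablaP}, compute the fiber derivatives of the $v$-dependent projections with Proposition \ref{dotPartialTopProposition} and \eqref{dotPartialNablaMathcalX}, and identify the surviving corrections through \eqref{UTA} and the extension of $\T$ and $\At$ to anisotropic vector fields. One small slip in wording: in the $R^\bot$ case the term $-(\dot\partial H^\bot)_v(P_v(e,v,\cdot))$ does not get ``gathered'' into the $(\T^v_e+\At^v_e)C^\sharp_v(h^\bot_v,\cdot)^\top_v$ corrections---being vertical it is simply annihilated by the outer $(\cdot)^\bot_v$-projection (just like the Cartan correction attached to the bracket term), and it is only $(\nabla_e(\dot\partial H^\bot))_v(\nabla^v_bV)$ that produces those terms, exactly as in the paper.
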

\begin{proof}
In the first case, 
 as $W^\top=W$, we only have to apply the chain rule  and  \eqref{vertder} to compute, 
\begin{align*}
\nabla^v_b (\nabla_E W)^\top
=&
\nabla^v_b (\nabla^V_E W)^\top_V
-
(\dot\partial (\nabla_e W)^\top)_v(\nabla^v_bV)
\\=&
\nabla^v_b (\nabla^V_E W)^\top_V
-2C^\sharp_v(\T^v_ew,\nabla^v_bV)^\top_v
-
P_v(e,w,\nabla^v_bV)^\top_v
\,.
\end{align*}
 Putting together the above identity and an analogous one commuting the role of $e$ and $b$, one concludes the expression for $R^\top$. 

In the second case, 
by \eqref{vertder} 
\begin{equation}\label{nablabrac}
\nabla^v_{[B,E]} H^\bot
=
\nabla^v_{[B,E]} H^\bot_V
-
(\dot\partial (H^\bot))_v(\nabla^v_{[B,E]} V)
=
\nabla^v_{[B,E]} H^\bot_V
+
2C^\sharp_v(h^\bot_v,\nabla^v_{[B,E]}V)^\top_v
\,,
\end{equation}
whose last term will vanish under $(\cdot)^\bot_v$, while 
\begin{equation*}
(\nabla^v_b (\nabla_E H^\bot)^\bot)^\bot_v
=
(\nabla^v_b (\nabla^V_E H^\bot)^\bot_V)^\bot_v
-
((\dot\partial (\nabla_e H^\bot)^\bot)_v(\nabla^v_bV))^\bot_v
\,.
\end{equation*}
Using the chain rule and \eqref{dotPartialNablaMathcalX},
\begin{align*}
((\dot\partial (\nabla_e H^\bot)^\bot)_v(\nabla^v_bV))^\bot_v
=&
((\dot\partial (\nabla^v_e H^\bot)^\bot)_v(\nabla^v_bV))^\bot_v
+
(((\dot\partial (\nabla_e H^\bot))_v(\nabla^v_bV))^\bot_v)^\bot_v
\\
=&
((\dot\partial (\nabla_e H^\bot))_v(\nabla^v_bV))^\bot_v
\\=&
P_v(e,h^\bot_v,\nabla^v_bV)^\bot_v
+
((\nabla_e(\dot\partial H^\bot))_v(\nabla^v_bV))^\bot_v
\,,
\end{align*}
because by \eqref{vertder}, the first term to the right and the term coming from the third one in \eqref{dotPartialNablaMathcalX} are the $g_v$-horizontal part of a vertical vector. 
Lastly, again by \eqref{vertder},
\begin{multline*}
(\nabla^V_E H^\bot)^\bot_V
=
(\nabla^V_E H^\bot_V)^\bot_V
-
((\dot\partial (H^\bot))_V(\nabla^V_EV))^\bot_V
\\
=
(\nabla^V_E H^\bot_V)^\bot_V
+
2(C^\sharp_V(H^\bot_V,\nabla^V_EV)^\top_V)^\bot_V
=
(\nabla^V_E H^\bot_V)^\bot_V
\,.
\end{multline*}
All that remains is to compute $((\nabla_e(\dot\partial (H^\bot)))_v(\nabla^v_bV))^\bot_v$, which we may obtain from \eqref{vertder} under the form
\begin{equation*}
(\nabla_e (\dot\partial (H^\bot))_v(\nabla^v_bV)
=
-2(\nabla_e (C^\sharp(H^\bot,\cdot)^\top))_v(\nabla^v_bV)
\end{equation*}
and by extending the definition of the tensors $\T$ and $\At$ to anisotropic vector fields, we obtain 
\begin{align*}
-2((\nabla_e (C^\sharp(H^\bot,\cdot)^\top))_v(\nabla^v_bV))^\bot_v
=&
-2(\nabla^v_eC^\sharp(H^\bot,\nabla_BV)^\top)^\bot_v
\\=&
-2(\T^v_e + \At^v_e)C^\sharp_v(h^\bot_v,\nabla^v_bV)^\top_v
\,.
\end{align*}
 Putting together all the last identities, we conclude that 
\begin{multline*}
(\nabla^v_b (\nabla_E H^\bot)^\bot)^\bot_v=(\nabla^v_b (\nabla^V_E H^\bot_V)^\bot_V)^\bot_v-
P_v(e,h^\bot_v,\nabla^v_bV)^\bot_v
\\
+2(\T^v_e + \At^v_e)C^\sharp_v(h^\bot_v,\nabla^v_bV)^\top_v
\,.
\end{multline*} 
Taking into account the expression obtained by commuting $e$ and $b$ and \eqref{nablabrac}, one deduces the expression for $R^\bot$.
\end{proof}
\begin{proposition}\label{RTopProposition}
For each admissible $ v $, arbitrary $ e $ and $ h $ at $ p $, vertical $w$ and $ g_v $-horizontal $ x $,
\begin{equation*}
R^\top_v(e,h)x
=
-
2 C^\sharp_v(R_v(e,h)v,x)^\top_v
\,,
\quad
R^\bot_v(e,h)w
=
0
\,.
\end{equation*}
\end{proposition}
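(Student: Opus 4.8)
The plan is to prove the two identities separately, in both cases using that $R^\top$ and $R^\bot$ are anisotropic tensors of type $(1,3)$, so their values at $v$ and at fixed vectors $e,h$ and a fixed third vector may be computed with any convenient local extensions.

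For $R^\bot_v(e,h)w=0$ I would take a \emph{vertical} extension $W$ of the vertical vector $w$. Since the vertical subspace $\ver_p=\ker\dif\sigma|_p$ does not depend on the admissible vector used to carry out the $g$-orthogonal splitting, the $g_{v'}$-horizontal part of a vertical vector vanishes for every admissible $v'$; hence the anisotropic vector field $W^\bot$ is identically zero. Every term in the definition of $R^\bot_v(e,h)w$ is obtained from $W^\bot$ by covariant derivation and projection, so they all vanish and $R^\bot_v(e,h)w=0$.

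For the first identity I would use a doubly-adapted choice of extensions: pick a locally admissible extension $V$ of $v$ with $\nabla^v V=0$ (as already used in Lemmas \ref{Lemma4} and \ref{Lemma5}), and then extend the $g_v$-horizontal vector $x$ to a vector field $X$ that is $g_V$-horizontal on a neighbourhood of $p$ (possible since $x$ lies in the fibre over $p$ of the smooth distribution whose fibre at each $p'$ is the $g_{V_{p'}}$-orthogonal complement of $\ver_{p'}$). With this choice $(X^\top)_V\equiv 0$, so \eqref{nablaP} together with $\nabla^v V=0$ gives $\nabla^v_e X^\top=0$ for every $e\in T_p\M$; in particular the bracket term $\nabla^v_{[E,H]}X^\top$ in $R^\top_v(e,h)x$ drops out. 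For the two remaining terms I would apply \eqref{nablaP} once more and the identity $(\dot\partial X^\top)_V=2C^\sharp_V(X,\cdot)^\top_V$, valid along $V$ by Proposition \ref{dotPartialTopProposition}, to show that the anisotropic field $(\nabla_H X^\top)^\top$ evaluated along $V$ is the vector field $-2C^\sharp_V(X,\nabla^V_HV)^\top_V$, which vanishes at $p$. Differentiating this field at $p$ in the direction $e$ with $\nabla^V$, all Leibniz terms except the one hitting $\nabla^V_HV$ are killed (because $\nabla^v V=0$ and $(\nabla^V_HV)_p=0$), leaving $-2C^\sharp_v(x,(\nabla^V_E\nabla^V_HV)_p)^\top_v$; the other surviving term of $R^\top_v(e,h)x$ is its analogue with $E$ and $H$ interchanged.

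Adding the two contributions and using $(\nabla^V_{[E,H]}V)_p=\nabla^v_{[E,H]}V=0$, the combination $(\nabla^V_E\nabla^V_HV-\nabla^V_H\nabla^V_EV)_p$ equals $R_v(e,h)v$, so that $R^\top_v(e,h)x=-2C^\sharp_v(x,R_v(e,h)v)^\top_v$, which is the claim by symmetry of $C$. I expect the main obstacle to be the bookkeeping of the fibre-derivative corrections: one has to verify that each term containing $\dot\partial$ or the tensor $P$ either vanishes because $\nabla^v V=0$ or is contracted against a quantity vanishing at $p$ (respectively at $v$). Committing from the outset to the doubly-adapted frame $(V,X)$ is what makes this routine; alternatively, Proposition \ref{RTopRBotProposition} can be invoked to shortcut the reduction from the anisotropic curvature to the affine one.
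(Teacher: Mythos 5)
Your proposal is correct and follows essentially the same route as the paper: a vertical extension makes $W^\bot\equiv 0$ and kills $R^\bot_v(\cdot,\cdot)w$, and for the first identity the paper likewise fixes $V$ with $\nabla^v V=0$ and a local $g_V$-horizontal extension $X$, uses Proposition \ref{dotPartialTopProposition} to get $(\nabla^V_H X^\top)^\top_V=-2C^\sharp_V(X,\nabla^V_H V)^\top_V$, and differentiates at $p$ where only the term hitting $\nabla^V_H V$ survives, yielding $-2C^\sharp_v(x,\nabla^v_e\nabla_H V)^\top_v$ and its commuted analogue. The only cosmetic difference is that the paper additionally chooses extensions with $[E,H]_p=0$ to discard the bracket term, whereas you discard it via $\nabla^v_b X^\top=0$ for every $b$ at $p$; both are immediate.
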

\begin{proof}
The second identity is merely a consequence of the fact that for any vertical extension $W$ of $w$, the anisotropic vector field $W^\top$ is constant and identical to $W$ on each tangent space while $W^\bot$ is identically equal to zero. As for the first identity,
consider a locally admissible extension $ V $ of $ v $ satisfying that $ \nabla^v V = 0 $, extensions $ E $ of $e$ and $ H $ of $ h $ whose Lie bracket $[E,H]$ vanishes at $ p $ and a local $ g_V $-horizontal extension $ X $ of $ x $.
Note how by Proposition \ref{dotPartialTopProposition}
\begin{equation*}
(\nabla^V_H X^\top)^\top_V
=
(\nabla^V_H X^\top_V)^\top_V
-
((\dot\partial X^\top)(\nabla^V_H V))^\top_V
=
-2C^\sharp_V(X,\nabla^V_H V)^\top_V
\,.
\end{equation*}
Differentiating $ \Xi_{v}(\cdot,\cdot) = -2(C^\sharp_v(\cdot,\cdot))^\top_v $, with $\nabla^v_HV=\nabla^v_eV=0$ for our choice of $V$,
\begin{multline*}
\nabla^v_e ((\nabla^V_H X^\top)^\top_V)
=
\nabla^v_e (\Xi_V(X,\nabla^V_H V))
=
\Xi_v(x, \nabla^v_e \nabla^V_H V)	
=
-2C^\sharp_v(x,\nabla^v_e \nabla^V_H V)^\top_v
\,.
\end{multline*}
Reinserting into
\begin{equation*}
(\nabla^v_e (\nabla_H X^\top)^\top)^\top_v
=
(\nabla^v_e (\nabla^V_H X^\top)^\top_V)^\top_v
=
-2 C^\sharp_v(x,\nabla^v_e \nabla^V_H V)^\top_v
=
-2 C^\sharp_v(x,\nabla^v_e \nabla_H V)^\top_v
\end{equation*}
concludes the first identity up to commuting $ e $ and $ h $.
\end{proof}
Observe how, if $ v $ is vertical, then we may relate $ R^\top_v $ with the intrinsic curvature tensor $ \hat{R} $ of the submersion fibers. In analogy to the vertical derivative $P$ of the Chern connection $\nabla$ of the ambient pseudo-Finsler manifold $(\M,L)$, let $\hat P$ designate that of $\hat\nabla$ and $\tilde P$ that of $\tilde\nabla$.
\begin{proposition}\label{RTopVProposition}
For each vertical admissible $ v $ and vertical $ s $, $ u $ and $ w $ at $ p $,
 \begin{multline*}
 R^\top_v(u,w)s
=
\hat R_v(u,w)s
+
(\hat\nabla_u\hat Q)^v_ws - (\hat\nabla_w\hat Q)^v_us
+
\hat Q^v_u\hat Q^v_ws - \hat Q^v_w\hat Q^v_us
\\
-
P_v(w,s,\hat Q^v_uv+\T^v_uv)^\top_v
+
P_v(u,s,\hat Q^v_wv+\T^v_wv)^\top_v
\\
-
2C^\sharp_v(\mathbf T^v_ws,\hat Q^v_uv+\mathbf T^v_uv)^\top_v
+
2C^\sharp_v(\mathbf T^v_us,\hat Q^v_wv+\mathbf T^v_wv)^\top_v
\,. 
\end{multline*} 
\end{proposition}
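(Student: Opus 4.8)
The plan is to reduce everything to two earlier results: the expression for $R^\top_v$ in terms of the affine connection $\nabla^V$ given in Proposition~\ref{RTopRBotProposition}, and the Gauss formula of Proposition~\ref{GaussFormulaVert}, which writes $(\nabla^v_u W)^\top_v = \hat\nabla^v_u W + \hat Q^v_u w$. First I would fix a vertical admissible $v$ and a locally admissible extension $V$; the natural economising choice here is $V$ \emph{vertical}, so that $\nabla^v_e V$ for vertical $e$ is governed by $\T$ and $\hat Q$ via $(\nabla^v_u V)^\top_v = \hat\nabla^v_u V + \hat Q^v_u v$ and $(\nabla^v_u V)^\bot_v = \T^v_u v$ (using part (i) of Proposition~\ref{TAProposition} and Lemma~\ref{Lemma3}), so that $\nabla^v_u V = \T^v_u v + \hat Q^v_u v + \hat\nabla^v_u V$. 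If moreover I demand $\hat\nabla^v V = 0$ at $p$ (possible since $\hat\nabla$ is an honest anisotropic connection on the fiber), then $\nabla^v_u V = \T^v_u v + \hat Q^v_u v$ at $p$, which explains the shape of the $P$- and $C^\sharp$-terms in the statement.

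Next I would substitute $b = u$, $e = w$, $h = s$ into the formula for $R^\top_v$ from Proposition~\ref{RTopRBotProposition}. The terms $-2C^\sharp_v(\T^v_w s,\nabla^v_u V)^\top_v + 2C^\sharp_v(\T^v_u s,\nabla^v_w V)^\top_v$ and $-P_v(w,s,\nabla^v_u V)^\top_v + P_v(u,s,\nabla^v_w V)^\top_v$ become, upon inserting $\nabla^v_u V = \T^v_u v + \hat Q^v_u v$ (and its $u\leftrightarrow w$ analogue), exactly the last four lines of the claimed identity. So the heart of the matter is the leading bracket
\begin{equation*}
\left( \nabla^v_u (\nabla^V_W S)^\top_V - \nabla^v_w (\nabla^V_U S)^\top_V - \nabla^v_{[U,W]} S \right)^\top_v,
\end{equation*}
and the task is to show this equals $\hat R_v(u,w)s + (\hat\nabla_u\hat Q)^v_w s - (\hat\nabla_w\hat Q)^v_u s + \hat Q^v_u\hat Q^v_w s - \hat Q^v_w\hat Q^v_u s$. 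Here I apply the Gauss formula $(\nabla^V_W S)^\top_V = \hat\nabla^V_W S + \hat Q^V_W S$ inside, differentiate again with $\nabla^v_u$, and use the covariant-derivative rule \eqref{covder}--\eqref{fiberder} together with \eqref{nablaP} to expand $\nabla^v_u(\hat\nabla^V_W S + \hat Q^V_W S)^\top_V$ into $\hat\nabla^v_u\hat\nabla_W S + (\hat\nabla_u\hat Q)^v_w s + \hat Q^v_u\hat Q^v_w s$ plus terms of the form $\hat Q^v_u(\hat\nabla^v_w s)$ and a vertical-lift correction; the key point is that the fiber derivatives of $\hat\nabla S$ and $\hat Q$ against $\nabla^v_u V$ are absorbed because $\hat\nabla^v V = 0$ and $\nabla^v_u V$ is already vertical. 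The $\hat\nabla_u\hat\nabla_W S - \hat\nabla_w\hat\nabla_U S - \hat\nabla_{[U,W]} S$ piece is by definition $\hat R_v(u,w)s$, and the remaining $\hat\nabla$-of-$\hat Q$ and $\hat Q$-of-$\hat\nabla$ cross terms reorganise into the stated covariant derivatives $(\hat\nabla_u\hat Q)^v_w s$ and the composition $\hat Q^v_u\hat Q^v_w s$, after cancelling the symmetric-in-$(u,w)$ leftovers against their $u\leftrightarrow w$ counterparts.

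The main obstacle I anticipate is bookkeeping the fiber-derivative terms correctly when passing from the anisotropic connection $\nabla$ to the affine connection $\nabla^V$ on both $\M$ and the fiber: one has to be careful that $\hat Q^V_W S$ is an anisotropic object on the fiber and that applying $\dot\partial$ against $\nabla^v_u V$ (which is vertical but nonzero even with our gauge, since $\hat Q^v_u v$ need not vanish) does not produce extra contributions beyond the $P$- and $C^\sharp$-terms already isolated. The cleanest way to handle this is to note that all such fiber-derivative terms have already been extracted into the explicit $P_v$ and $C^\sharp_v$ expressions in Proposition~\ref{RTopRBotProposition}, so inside the leading bracket one may legitimately work as if with ordinary affine connections, reducing the computation to the classical O'Neill-type manipulation of Gauss-formula iterations — the same algebra as in the submanifold setting of \cite{2022HuberJavaloyes}. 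Once that reduction is justified, the identity follows by a direct expansion and the antisymmetrisation in $(u,w)$.
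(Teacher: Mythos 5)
Your strategy is correct, but it follows a genuinely different route from the paper. The paper's proof is a two-line reduction to submanifold theory: it invokes the curvature relation for pseudo-Finsler submanifolds from \cite[Theorem 5]{2022HuberJavaloyes}, applied to the fibers, and then converts the fiber-intrinsic terms $\hat P_v(u,s,\hat Q^v_wv)+(\dot\partial\hat Q)^v_us(\hat Q^v_wv)$ appearing there into the ambient terms $P_v(u,s,\hat Q^v_wv)^\top_v+2C^\sharp_v(\T^v_us,\hat Q^v_wv)^\top_v$ by fiber-differentiating the Gauss formula of Proposition \ref{GaussFormulaVert} in the direction $\hat Q^v_wv$. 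You instead stay entirely inside this paper: starting from Proposition \ref{RTopRBotProposition} with a vertical admissible extension $V$ gauged so that $\hat\nabla^v V=0$ at $p$ (whence $\nabla^v_uV=\T^v_uv+\hat Q^v_uv$ by \eqref{UTA} and Proposition \ref{GaussFormulaVert}), which yields the stated $P$- and $C^\sharp$-terms directly, and you generate the block $\hat R_v(u,w)s+(\hat\nabla_u\hat Q)^v_ws-(\hat\nabla_w\hat Q)^v_us+\hat Q^v_u\hat Q^v_ws-\hat Q^v_w\hat Q^v_us$ by iterating the Gauss formula twice inside the leading bracket; the cross terms of type $\hat Q^v_w(\hat\nabla^v_uS)$ cancel in pairs, and the term $\hat Q^v_{\hat\nabla^v_uW-\hat\nabla^v_wU-[U,W]_p}s$ vanishes by torsion-freeness of $\hat\nabla$, so the identity closes without citing the earlier paper. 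This buys a self-contained derivation at the cost of redoing bookkeeping that \cite{2022HuberJavaloyes} already packages, while the paper's route is shorter but hides the main computation in the external reference. One small correction to your discussion: $\nabla^v_uV$ is \emph{not} vertical in general --- it carries the horizontal component $\T^v_uv$ --- and the fiber-intrinsic corrections you must kill are those against $\hat\nabla^v_uV$ (which your gauge does), the ambient fiber-derivative corrections being exactly the $P$- and $C^\sharp$-terms already isolated in Proposition \ref{RTopRBotProposition}; this slip does not affect the validity of the argument.
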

\begin{proof}
 It follows from  \cite[Theorem 5]{2022HuberJavaloyes}, taking into account that 
we may differentiate the Gauss formula  in Proposition \ref{GaussFormulaVert}  with respect to the  fiber derivative in the direction of $\hat Q^v_wv$   to obtain
\begin{equation*}
P_v(u,s,\hat Q^v_wv)^\top_v
+
2C^\sharp_v(\mathbf T^v_us,\hat Q^v_wv)^\top_v
=
\hat P_v(u,s,\hat Q^v_wv)
+
 (\dot\partial\hat Q)^v_us(\hat Q^v_wv) 
\,.
\end{equation*}
 Here we have used the chain rule and \eqref{vertder}. 	
\end{proof}
Similarly, if $v$ is horizontal, then we may relate $R^\bot_v$ with the horizontal lift $\tilde{R}^\ast_v$ of the curvature tensor of the base manifold.
\begin{proposition}\label{RBotVProposition}
For each horizontal $ v $ at $ p $ and $ g_v $-horizontal $ x $, $ y $ and $ z $,
\begin{align*}
& R^\bot_v(x,y)z
\\
& \quad =
\tilde{R}^\ast_v(\tilde{x},\tilde{y})\tilde{z}
+
\At^v_z\At^v_y x
-
\At^v_z\At^v_x y
+
((\nabla_x \tilde{Q})^v_y z - (\nabla_y \tilde{Q})^v_x z)^\bot_v
+
\tilde{Q}^v_y \tilde{Q}^v_x z - \tilde{Q}^v_x \tilde{Q}^v_y z
\\ & \qquad
+
2 \At^v_z C^\sharp_v(y,\At^v_xv)^\top_v
+
2 \At^v_y C^\sharp_v(z,\At^v_xv)^\top_v
-
2 \At^v_z C^\sharp_v(x,\At^v_yv)^\top_v
-
2 \At^v_x C^\sharp_v(z,\At^v_yv)^\top_v
\\ & \qquad
-
P_v(y,z,\At^v_xv)^\bot_v
+
P_v(x,z,\At^v_yv)^\bot_v
+
((\dot\partial\tilde Q)^v_yz(\At^v_xv))^\bot_v
-
((\dot\partial\tilde Q)^v_xz(\At^v_yv))^\bot_v
\,.
\end{align*}
\end{proposition}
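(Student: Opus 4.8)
The plan is to mimic the derivation of the classical Gauss equation for the curvature of a submanifold, but in the dual (submersion) direction, using the Dual Gauss formula of Propositions~\ref{VXYhor} and \ref{GaussProposition} in place of the submanifold Gauss formula, exactly as Proposition~\ref{RTopVProposition} does in the vertical case with the Gauss formula of Proposition~\ref{GaussFormulaVert}. Concretely, I would fix a local horizontal projectable extension $V$ of $v$ whose projection satisfies $\tilde\nabla^{\tilde v}\tilde V = 0$ (available as in Proposition~\ref{TAProposition2}) together with projectable $g_V$-horizontal extensions $X,Y,Z$ of $x,y,z$, and start from the expression for $R^\bot_v(x,y)z$ given by Proposition~\ref{RTopRBotProposition} with $b=x$, $e=y$, $h=z$, noting that $H^\bot_V=Z$ and $h^\bot_v=z$. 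For this choice $\nabla^v_x V = \At^v_x v$ and $\nabla^v_y V = \At^v_y v$ by \eqref{YTA} together with Proposition~\ref{VXYhor} (as in the proof of Proposition~\ref{TAProposition2}), so the two $P$-terms and two of the four $\At C^\sharp$-terms of the statement already appear among the correction terms furnished by Proposition~\ref{RTopRBotProposition}.

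The core of the argument is the leading term $\big(\nabla^v_x(\nabla^V_Y Z^\bot_V)^\bot_V - \nabla^v_y(\nabla^V_X Z^\bot_V)^\bot_V - \nabla^v_{[X,Y]}Z^\bot_V\big)^\bot_v$. On the first two summands I would substitute the Dual Gauss formula $(\nabla^V_Y Z)^\bot_V = (\tilde\nabla^{\tilde V}_{\tilde Y}\tilde Z)^\ast_V + \tilde Q^V_Y Z$ from Proposition~\ref{GaussProposition}, and then apply $\nabla^v_x$ and take the $g_v$-horizontal part: on the horizontal-lift summand I would use \eqref{GaussFormula} (its $\T$- and $\At$-terms drop under $(\cdot)^\bot_v$ since both arguments are $g_v$-horizontal, so only another horizontal lift plus a $\tilde Q$-term survive), and on the $\tilde Q$-summand the anisotropic product rule \eqref{covder} and \eqref{nablaP}. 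Because both the base field $\tilde\nabla^{\tilde V}_{\tilde Y}\tilde Z$ and the tensor $\tilde Q$ are anisotropic, this step produces fiber-derivative corrections which, via $\nabla^v_x V = \At^v_x v$, collapse into the $(\dot\partial\tilde Q)$-terms and the two remaining $\At C^\sharp$-terms; the second-derivative-of-$\tilde\nabla$ pieces, using $[\tilde X,\tilde Y]=\widetilde{[X,Y]}$ for projectable fields and $\tilde\nabla^{\tilde v}\tilde V=0$ to identify the anisotropic base curvature with the affine one, assemble into $\tilde R^\ast_v(\tilde x,\tilde y)\tilde z$; and the remaining pieces give $((\nabla_x\tilde Q)^v_y z - (\nabla_y\tilde Q)^v_x z)^\bot_v$ and $\tilde Q^v_y\tilde Q^v_x z - \tilde Q^v_x\tilde Q^v_y z$.

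It remains to account for the quadratic terms $\At^v_z\At^v_y x - \At^v_z\At^v_x y$; these come from the third summand $-\nabla^v_{[X,Y]}Z^\bot_V$. Decomposing $[X,Y]$ into its $g_v$-vertical and $g_v$-horizontal parts, using \eqref{Axy} and \eqref{Axv} to express $[X,Y]^\top_v$ through $\At$ and $C^\sharp$, and then \eqref{Lieconversion} (valid since $Z$ is projectable) to rewrite $(\nabla^v_{[X,Y]^\top_v}Z)^\bot_v=\At^v_z([X,Y]^\top_v)$, produces precisely those quadratic $\At$ terms, up to further $C^\sharp$ contributions.

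I expect the main obstacle to be exactly the bookkeeping of the Cartan-tensor corrections: in the Riemannian case the statement would follow almost at once from iterating the Dual Gauss formula, but here one must track the $C^\sharp$-terms coming from \eqref{YTA}, from the fiber derivatives of $\tilde Q$ and of the horizontal lift, and from the expansion of $[X,Y]^\top_v$, and show --- using repeatedly Proposition~\ref{TAProposition}, the almost-antisymmetry of $\At$ from Proposition~\ref{TAProposition2}, the skew-symmetry of $\T$ and $\At$, and the symmetry of $C^\sharp$ --- that all of them cancel except for the four $\At C^\sharp$-terms, the two $P$-terms, and the two $(\dot\partial\tilde Q)$-terms displayed in the statement. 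A subsidiary care point is that $R^\bot$ is, by definition, obtained by projecting onto $\hor$ at each stage, so one must keep track of which vertical contributions are discarded and which (through the $[X,Y]$-term) re-enter.
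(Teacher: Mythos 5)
Your plan is essentially the paper's own proof: start from Proposition \ref{RTopRBotProposition} with a horizontal projectable $V$ satisfying $\tilde\nabla^{\tilde v}\tilde V=0$ (so $\nabla^v_xV=\At^v_xv$, $\nabla^v_yV=\At^v_yv$), expand the first two terms with the dual Gauss formula plus the anisotropic product rule and fiber-derivative corrections, and treat the bracket term with \eqref{Lieconversion} and \eqref{Axy} via the almost-antisymmetry of $\At$. The only refinement the paper adds is to choose $\tilde X,\tilde Y,\tilde Z$ with mutually vanishing Lie brackets at $p$, so that $[X,Y]^\bot_v=0$; then \eqref{Lieconversion} applies to the whole bracket term, the cross terms $\tilde Q^v_{\nabla^v_xY}z$ and $\tilde Q^v_{\nabla^v_yX}z$ cancel, and the four $\At C^\sharp$-terms arise directly from the correction terms of Proposition \ref{RTopRBotProposition} and from $\At^v_z[X,Y]^\top_v$, so the large-scale Cartan-tensor cancellation you flag as the main obstacle essentially disappears.
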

\begin{proof}
Consider an extension $ \tilde{V} $ of $ \tilde{v} $ satisfying $ \tilde\nabla^{\tilde{v}} \tilde{V} = 0 $, and let $ V $ be, locally, the horizontal lift of $ \tilde{V} $. In particular, by the dual Gauss formula $ (\nabla^v V)^\bot_v $ vanishes on $g_v$-horizontal vectors, while, by Lemma \ref{Lemma3}, $\nabla^v_x V = \At^v_x v$ and $\nabla^v_y V = \At^v_y v$. Let $ \tilde{X} $, $ \tilde{Y} $ and $ \tilde{Z} $ be extensions of $ \tilde{x} $, $ \tilde{y} $ and $ \tilde{z} $ with mutual Lie brackets that vanish at $ p $. Let $ X $, $ Y $ and $ Z $ be, locally, their respective $ g_V $-horizontal lifts. In particular, $ [X,Y]^\bot_v = [Y,Z]^\bot_v = [Z,X]^\bot_v = 0 $.
From Proposition \ref{RTopRBotProposition}, it suffices to compute $(\nabla^v_{[X,Y]}  Z)^\bot_v$  (recall that $Z^\bot_V=Z$)  and by Proposition \ref{GaussProposition}
\begin{equation*}
(\nabla^v_x(\nabla^V_YZ)^\bot_V)^\bot_v
=
(\nabla^v_x (\tilde\nabla^{\tilde V}_{\tilde y} \tilde Z)^\ast_V)^\bot_v
+
(\nabla^v_x \tilde Q^V_Y Z)^\bot_v
\,.
\end{equation*}
Observe that we may use the almost-antisymmetry of $\At$ to rewrite \eqref{Axy} under the form
\begin{equation}\label{AtvxyLieAtvyx}
\tfrac{1}{2}
\At^v_x y
=
\tfrac{1}{2}[X,Y]^\top_v
+
\tfrac{1}{2}
\At^v_y x
+
C^\sharp_v(\At^v_x v,y)^\top_v
-
C^\sharp_v(x,\At^v_y v)^\top_v
\,.
\end{equation}
Then, using the above identity, \eqref{Lieconversion} (recall that $[X,Y]$ is vertical by assumption) 
and part $(iii)$ of Proposition \ref{TAProposition}, we get
\begin{equation*}
(\nabla^v_{[X,Y]} Z)^\bot_v
=
\At^v_z [X,Y]
=
\At^v_z \At^v_x y
-
\At^v_z \At^v_y x
+
2 \At^v_z C^\sharp(\At^v_v x,y)^\top_v
-
2 \At^v_z C^\sharp_v(x,\At^v_v y)^\top_v
\,.
\end{equation*}
By the dual Gauss formula, and since for our choice of extension $\tilde\nabla^{\tilde v}\tilde V=0$,
\begin{multline*}
(\nabla^v_x (\tilde\nabla^{\tilde V}_{\tilde y} \tilde Z)^\ast_V)^\bot_v
+
(\nabla^v_x \tilde Q^V_Y Z)^\bot_v
=
(\tilde\nabla^{\tilde v}_{\tilde x} \tilde\nabla_{\tilde Y} \tilde Z)^\ast_v
+
\tilde Q^v_x(\tilde\nabla^{\tilde v}_{\tilde y} \tilde Z)^\ast_v
\\
+
((\nabla_x\tilde Q)^v_yz)^\bot_v
+
\tilde Q^v_{\nabla^v_xY}z
+
\tilde Q^v_y(\tilde\nabla^{\tilde v}_{\tilde x}\tilde Z)^\ast_v
+
\tilde Q^v_y\tilde Q^v_xz
+
((\dot\partial\tilde Q)^v_yz(\At^v_xv))^\bot_v
\,.
\end{multline*}
When subtracting to the formula we have obtained the formula commuted in $x$ and $y$, the pairs of $\tilde Q^v(\tilde\nabla^{\tilde v}\tilde Z)^\ast_v$ terms cancel each other out, as well as the pair of $\tilde Q^v_{\nabla^v_xY}z$ and $\tilde Q^v_{\nabla^v_yX}z$ by our assumption that $[X,Y]^\bot_v=0$.
\end{proof}
\begin{theorem}[Unified fundamental equation of a pseudo-Finsler submersion]
For each admissible $ v $ and arbitrary vectors $ e $ and $ h $ at $ p $,
\begin{multline}\label{Unified}
R_v(e,h)
=
R^\top_v(e,h)
+
R^\bot_v(e,h)
+
(\nabla_e (\T+\At))^v_h - (\nabla_h (\T+\At))^v_e
\\
+
(\T+\At)^v_h(\T+\At)^v_e - (\T+\At)^v_e(\T+\At)^v_h
\,,
\end{multline}
where $R$ stands for the curvature tensor of the Chern connection of the ambient manifold, $R^\top$ and $R^\bot$ are as defined in Definition \ref{RTopBotDefinition} and $\T,\At$ are the O'Neill tensors of Definition \ref{ATDefinition}.
\end{theorem}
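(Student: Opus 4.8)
The plan is to follow the scheme of O'Neill's original proof in \cite{1966ONeill}, checking the identity after contraction with an arbitrary vector $b$ at $p$. Extend $b$, $e$, $h$ to arbitrary vector fields $B$, $E$, $H$ and start from the defining identity $R_v(e,h)B=\nabla^v_e\nabla_H B-\nabla^v_h\nabla_E B-\nabla^v_{[E,H]}B$. The single structural tool is the splitting of $\nabla^v E$ recorded just before Definition \ref{ATDefinition}, which, since $\chi$ is well defined on anisotropic vector fields, holds verbatim for every anisotropic vector field $\mathcal Z$ and every vector $a$ at $p$:
\begin{equation*}
\nabla^v_a\mathcal Z=(\nabla^v_a\mathcal Z^\top)^\top_v+(\nabla^v_a\mathcal Z^\bot)^\bot_v+(\T+\At)^v_a\mathcal Z_v .
\end{equation*}
No auxiliary admissible extension $V$ of $v$ is needed, although choosing one with $\nabla^vV=0$ shortens the intermediate expressions.

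First I would expand $\nabla_H B=(\nabla_H B^\top)^\top+(\nabla_H B^\bot)^\bot+(\T+\At)(H,B)$ as an identity of anisotropic vector fields and apply $\nabla^v_e$ to it. Since $(\nabla_H B^\top)^\top$ has vanishing horizontal part and $(\nabla_H B^\bot)^\bot$ has vanishing vertical part, the splitting above together with part $(i)$ of Proposition \ref{TAProposition} turns the first two summands into the parallel pieces $(\nabla^v_e(\nabla_H B^\top)^\top)^\top_v$ and $(\nabla^v_e(\nabla_H B^\bot)^\bot)^\bot_v$ plus the corrections $(\T+\At)^v_e(\nabla^v_H B^\top)^\top_v$ and $(\T+\At)^v_e(\nabla^v_H B^\bot)^\bot_v$. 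For the third summand I would use the Leibniz rule \eqref{covder} for the covariant derivative of the $(1,2)$-tensor $\T+\At$ contracted with $H$ and $B$, which produces $(\nabla_e(\T+\At))^v_h b$, $(\T+\At)^v_{\nabla^v_e H}b$ and $(\T+\At)^v_h\nabla^v_e B$; expanding the last of these by the splitting once more is precisely where the term $(\T+\At)^v_h(\T+\At)^v_e b$ appears. Carrying out the same computation for $\nabla^v_h\nabla_E B$ with $(e,E)$ and $(h,H)$ interchanged, and applying the splitting once to $\nabla^v_{[E,H]}B$, completes the expansion of $R_v(e,h)B$.

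Then I would collect terms. The parallel vertical pieces assemble, by linearity of $(\cdot)^\top_v$, into exactly $R^\top_v(e,h)b$ of Definition \ref{RTopBotDefinition}, and the parallel horizontal ones into $R^\bot_v(e,h)b$; the Leibniz pieces involving $\nabla(\T+\At)$ give $(\nabla_e(\T+\At))^v_h b-(\nabla_h(\T+\At))^v_e b$; and the two composition pieces give $(\T+\At)^v_h(\T+\At)^v_e b-(\T+\At)^v_e(\T+\At)^v_h b$. It remains to check that the remaining terms cancel. The corrections $(\T+\At)^v_e(\nabla^v_H B^\top)^\top_v$, $(\T+\At)^v_e(\nabla^v_H B^\bot)^\bot_v$, $(\T+\At)^v_h(\nabla^v_e B^\top)^\top_v$, $(\T+\At)^v_h(\nabla^v_e B^\bot)^\bot_v$ and their $(e,E)\leftrightarrow(h,H)$ counterparts cancel in pairs, because an anisotropic connection is tensorial in its lower-left slot, so that $(\nabla^v_H B^\top)^\top_v$ and $(\nabla^v_H B^\bot)^\bot_v$ depend on $H$ only through $H_p=h$; and the remaining $(\T+\At)^v_{\nabla^v_e H}b-(\T+\At)^v_{\nabla^v_h E}b-(\T+\At)^v_{[E,H]_p}b$ vanishes because $\nabla^v_e H-\nabla^v_h E=[E,H]_p$ by torsion-freeness of the Chern connection and $(\T+\At)^v_a c$ is linear in $a$. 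Since $b$ is arbitrary, \eqref{Unified} follows as an identity of $(1,3)$-anisotropic tensors.

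I expect the main obstacle to be organizational rather than conceptual: one must keep careful track of which fiber-derivative corrections have already been absorbed into $R^\top$, $R^\bot$ and $\nabla(\T+\At)$, and be sure that differentiating the anisotropic vector fields $(\nabla_H B^\top)^\top$, $(\nabla_H B^\bot)^\bot$ and $(\T+\At)(H,B)$ introduces no further ones. This is exactly what the good behaviour of $\chi$ on anisotropic vector fields guarantees, so once the splitting is in place the argument reduces to the same term matching as in \cite{1966ONeill}, the only genuinely Finslerian ingredients being Proposition \ref{TAProposition}$(i)$ and the Leibniz rule \eqref{covder}.
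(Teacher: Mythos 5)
Your proposal is correct and takes essentially the same route as the paper's proof: expand $R_v(e,h)B$ using the decomposition of $\nabla^v$ through $\chi=\T+\At$ (applied, as the paper allows, to the anisotropic fields $(\nabla_H B^\top)^\top$ and $(\nabla_H B^\bot)^\bot$), apply the Leibniz rule to $\nabla^v_e((\T+\At)(H,B))$, and then regroup into $R^\top$, $R^\bot$, the $\nabla(\T+\At)$ terms and the composition terms, with the cross-term corrections cancelling in pairs. The only cosmetic difference is that the paper chooses extensions with $[E,H]_p=0$, whereas you keep the bracket term and cancel $(\T+\At)^v_{\nabla^v_eH}b-(\T+\At)^v_{\nabla^v_hE}b-(\T+\At)^v_{[E,H]_p}b$ by torsion-freeness, folding the remaining bracket pieces into the definitions of $R^\top$ and $R^\bot$.
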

\begin{proof}
For a choice of extensions whose Lie brackets vanish at $ p $,
\begin{equation}
R_v(e,h)
=
\nabla^v_e \nabla_H
-
\nabla^v_h \nabla_E
\,.
\end{equation}
Consider any vector field $ B $ with $ b = B_p $. By definition of the tensors $ \T $ and $ \At $,
\begin{equation*}
\nabla^v_e \nabla_H B
=
\nabla^v_e (\nabla_H B^\top)^\top
+
\nabla^v_e ((\T+\At)_H B)
+
\nabla^v_e (\nabla_H B^\bot)^\bot
\,,
\end{equation*}
where we may further expand the first and last terms as
\begin{align*}
\nabla^v_e (\nabla_H B^\top)^\top
&=
(\nabla^v_e (\nabla_H B^\top)^\top)^\top_v
+
(\T+\At)^v_e (\nabla^v_h B^\top)^\top_v,\\
\nabla^v_e (\nabla_H B^\perp)^\perp
&=
(\nabla^v_e (\nabla_H B^\perp)^\perp)^\perp_v
+
(\T+\At)^v_e (\nabla^v_h B^\perp)^\perp_v,
\end{align*}
and the middle term as
\begin{align*}
\nabla^v_e ((\T+\At)_H B)
=&(\nabla_e (\T+\At))^v_h b
+(\T+\At)^v_{\nabla^v_e H} b+(\T+\At)^v_{h}{\nabla^v_e B} \\
=&(\nabla_e (\T+\At))^v_h b
+
(\T+\At)^v_{\nabla^v_e H} b
\\
&+
(\T+\At)^v_h (\nabla^v_e B^\top)^\top_v
+
(\T+\At)^v_h (\T+\At)^v_e b
+
(\T+\At)^v_h (\nabla^v_e B^\bot)^\bot_v
\end{align*}
to obtain a long expression, to which we may substract the corresponding expressions commuted in $e$ and $h$. Pairs of $(\T+\At)^v(\nabla^vB^\top)^\top_v$ and $(\T+\At)^v(\nabla^vB^\bot)^\bot_v$ terms cancel each other out. Cancel $(\T+\At)^v_{\nabla^v_eH}b$ with $-(\T+\At)^v_{\nabla^v_hE}b$ by our assumption that $[E,H]_p=0$ to conclude.
\end{proof}
\begin{corollary}[Fundamental equations of a pseudo-Finsler submersion]\label{fundeqgeneral}
For each admissible $v$, vertical $s',s,u,w$ and $g_v$-horizontal $x,y,z,z'$
\begin{align*}\tag*{\{0\}}\label{0}
&
g_v(R_v(w,u)s,s')
=
g_v(R^\top_v(w,u)s,s')
 +
g_v(\T^v_ws,\T^v_us')
-
g_v(\T^v_us,\T^v_ws')
\,,
\\ & \tag*{\{1\}}\label{1}
g_v(R_v(w,u)s,z)
=
g_v((\nabla_w \T)^v_us,z) - g_v((\nabla_u \T)^v_ws,z)
\,,
\\ & \tag*{\{1'\}}\label{1p}
g_v(R_v(x,u)s,w)
=
g_v(R^\top_v(x,u)s,w)
+
g_v(\T^v_us,\At^v_xw)
-
g_v(\At^v_xs,\T^v_uw)
\,,
\\ & \tag*{\{2\}}\label{2}
g_v(R_v(x,u)s,z)
=
g_v((\nabla_x \T)^v_us,z)
-
g_v((\nabla_u \At)^v_xs,z)
-
g_v(\At^v_{\At^v_xu}s,z)
-
g_v(\T^v_ux,\T^v_sz)
\,,
\\ & \tag*{\{2'\}}\label{2p}
g_v(R_v(x,y)s,w)
=
g_v(R^\top_v(x,y)s,w)
+
g_v(\At^v_ys,\At^v_xw)
-
g_v(\At^v_xs,\At^v_yw)
\,,
\\ & \tag*{\{3\}}\label{3}
g_v(R_v(x,y)s,z)
=
g_v((\nabla_x \At)^v_ys,z) - g_v((\nabla_y \At)^v_xs,z)
+
g_v(\At^v_yx,\T^v_sz)
-
g_v(\At^v_xy,\T^v_sz)
\,,
\\ & \tag*{\{4\}}\label{4}
g_v(R_v(x,y)z,z')
=
g_v(R^\bot_v(x,y)z,z')
 +
g_v(\At^v_xz,\At^v_yz')
-
g_v(\At^v_yz,\At^v_xz') 
\,.
\end{align*}
\end{corollary}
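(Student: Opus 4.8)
The plan is to read off each of the eight identities \ref{0}--\ref{4} directly from the unified equation \eqref{Unified}: for each one I substitute the prescribed vertical or $g_v$-horizontal vectors for $e$ and $h$, evaluate both sides on the prescribed third argument, and pair the result with the prescribed vertical or $g_v$-horizontal vector. Everything then comes down to systematic cancellation, controlled by three facts already available. From Definition \ref{ATDefinition} and Proposition \ref{TAProposition}, $(\T+\At)^v_u=\T^v_u$ for vertical $u$ and $(\T+\At)^v_x=\At^v_x$ for $g_v$-horizontal $x$; each of $\T^v_e$, $\At^v_e$ is $g_v$-skew-symmetric; and each interchanges the vertical subspace with its $g_v$-orthogonal complement. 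This pins down, case by case, which of the two quadratic operators $(\T+\At)^v_h(\T+\At)^v_e$ and $(\T+\At)^v_e(\T+\At)^v_h$ sends the third argument into the vertical part and which into the horizontal part, so that exactly the right ones survive the final pairing, and it is also the mechanism by which the surviving terms are re-expressed in the compact shape of the corollary.

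The second fact is that, by Definition \ref{RTopBotDefinition} together with Proposition \ref{RTopProposition}, $R^\top_v(e,h)$ always takes values in the vertical space while $R^\bot_v(e,h)$ annihilates every vertical vector. Consequently $R^\bot_v$ never enters \ref{0}--\ref{3} (whose third argument is vertical) and $R^\top_v$ never enters \ref{4} (whose pairing vector is horizontal); furthermore, in \ref{1}, \ref{2}, \ref{3} the pairing vector is $g_v$-horizontal and the only curvature term present, $R^\top_v(\cdot,\cdot)s$, is vertical, so it too drops out, leaving those three identities to be produced purely from the covariant-derivative terms of \eqref{Unified}. The third ingredient is Lemmas \ref{Lemma4} and \ref{Lemma5}, which rewrite each $(\nabla_\bullet(\T+\At))^v_\bullet$ applied to the third argument as a sum of compositions and contractions of $\T^v$ and $\At^v$: Lemma \ref{Lemma4} when the middle slot is vertical for $\At$ or $g_v$-horizontal for $\T$ (this is exactly what generates, e.g., the term $-g_v(\At^v_{\At^v_xu}s,z)$ in \ref{2} and the $\T^v_s$-contracted bracket terms in \ref{3}), and Lemma \ref{Lemma5} to extract the $(\cdot)^\top_v$ part when the pairing vector is vertical (\ref{0}, \ref{1p}, \ref{2p}) and the $(\cdot)^\bot_v$ part when it is $g_v$-horizontal (\ref{4}).

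A representative case is \ref{0}: with $(e,h)=(w,u)$ applied to $s$ and paired with $s'$, the $R^\bot$ term dies, the $\At$-parts of both covariant derivatives vanish by Lemma \ref{Lemma4}, Lemma \ref{Lemma5} contributes $2(\T^v_w\T^v_u-\T^v_u\T^v_w)s$ for the covariant difference, the quadratic term contributes $(\T^v_u\T^v_w-\T^v_w\T^v_u)s$, and $g_v$-skew-symmetry of $\T^v$ collapses the sum into $g_v(\T^v_ws,\T^v_us')-g_v(\T^v_us,\T^v_ws')$. Equation \ref{4} is the exact mirror image with $\At$, $(\cdot)^\bot_v$ and $R^\bot$ in place of $\T$, $(\cdot)^\top_v$ and $R^\top$; \ref{1} is the degenerate case where everything except the two $\nabla\T$ terms vanishes; \ref{1p} and \ref{2p} follow the same pattern with a vertical pairing vector and one horizontal entry among $e,h$; and in \ref{2}, \ref{3} one covariant derivative is handled by Lemma \ref{Lemma4} and the other by Lemma \ref{Lemma5}, after which the almost-(anti)symmetry relations of $\At^v$ from Proposition \ref{TAProposition2} (and the identity $\At^v_xv=-\At^v_vx$) are used to bring the bracket terms into the stated form.

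The main obstacle is purely organisational rather than conceptual: for each of the eight cases and each of the five summands of \eqref{Unified}, one must correctly decide which of $\T^v,\At^v$ is active in each factor, whether the resulting vector lies in the vertical or the $g_v$-horizontal part (hence whether it survives the pairing), and then reassemble the survivors into the displayed expression using only $g_v$-skew-symmetry and the symmetry relations of Propositions \ref{TAProposition} and \ref{TAProposition2}. The delicate points are the sign-and-slot bookkeeping in the mixed equations \ref{1p}, \ref{2}, \ref{2p}, \ref{3}, where two different O'Neill operators are composed and one must be careful about the order in which skew-symmetry is used to move them across $g_v$; no individual step is hard, but coordinating all eight identities consistently is where essentially all the effort lies.
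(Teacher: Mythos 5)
Your proposal is correct and follows essentially the same route as the paper: substitute into the unified equation \eqref{Unified}, discard the $R^\top$/$R^\bot$ contributions via Proposition \ref{RTopProposition} and the pairing, and reduce the covariant-derivative and commutator terms with Lemmas \ref{Lemma4} and \ref{Lemma5} together with the skew-symmetry, mapping and symmetry properties of $\T$ and $\At$. The only difference is organizational: the paper packages your case-by-case use of Lemma \ref{Lemma5} into the single rearrangement \eqref{NaTA} (replacing the commutator by its reverse plus the horizontal, resp.\ vertical, part of the covariant difference), and in $\{2\}$ and $\{3\}$ the non-algebraic derivative terms are simply kept verbatim rather than ``handled'' by Lemma \ref{Lemma5}; your worked case $\{0\}$ is exactly the paper's computation.
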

\begin{proof}
Recall that, by Proposition \ref{RTopProposition}, $ R^\bot_v(w,u)s = R^\bot_v(x,u)s = R^\bot_v(x,y)s = 0 $.
 Moreover, using \eqref{nablaT+Au} and being $e,h$ arbitrary vectors at $p$,
\begin{multline}\label{NaTA}
(\nabla_e (\T+\At))^v_hs - (\nabla_h (\T+\At))^v_es
\\
+
(\T+\At)^v_h(\T+\At)^v_es - (\T+\At)^v_e(\T+\At)^v_hs
\\
=((\nabla_e (\T+\At))^v_hs - (\nabla_h (\T+\At))^v_es)^\bot_v
\\
+
(\T+\At)^v_e(\T+\At)^v_hs - (\T+\At)^v_h(\T+\At)^v_es\,. 
\end{multline}
Then the six first identities are obtained from \eqref{Unified} using the last identity together with the observation about $R^\bot$,  an application of Lemma \ref{Lemma4} and the properties of $\T$ and $\At$ in Proposition \ref{TAProposition}. In particular,
\begin{equation*}
 ((\nabla_w \At)^v_u s - (\nabla_u \At)^v_w s)^\bot_v=- \At^v_{\T^v_wu} s + \At^v_{\T^v_uw} s = 0 
\end{equation*}
by Lemma \ref{Lemma4} and the symmetry of $\T$ to get $\{0\}$ and $\{1\}$,
\[
((\nabla_x \At)^v_u s - (\nabla_u \T)^v_x s)^\bot_v
=-
\At^v_{\At^v_xu} s
+
\T^v_s\T^v_ux
\]
where we have used $\T^v_{\T^v_ux}s=\T^v_s\T^v_ux$, to get $\{1'\}$ and $\{2\}$,
\[ ((\nabla_x \T)^v_y s - (\nabla_y \T)^v_x s)^\bot_v=
-
\T^v_s\At^v_xy
+
\T^v_s\At^v_yx
\]
where we have used $ \T^v_{\At^v_xy} s = \T^v_s \At^v_xy  $, to get $\{2'\}$ and $\{3\}$. 
For the remaining identity $\{4\}$, we proceed analogously. Using \eqref{nablaT+Ay}
\begin{multline*}
(\nabla_e (\T+\At))^v_hz - (\nabla_h (\T+\At))^v_ez
\\
+
(\T+\At)^v_h(\T+\At)^v_ez - (\T+\At)^v_e(\T+\At)^v_hz
\\
=((\nabla_e (\T+\At))^v_hz - (\nabla_h (\T+\At))^v_ez)^\top_v
\\
+
(\T+\At)^v_e(\T+\At)^v_hz - (\T+\At)^v_h(\T+\At)^v_ez
\end{multline*} 
and by Lemma \ref{Lemma4},
\[ ((\nabla_x \T)^v_y z - (\nabla_y \T)^v_x z)^\top_v
=-
\T^v_{\At^v_xy} z
+
\T^v_{\At^v_yx} z\,.\]
\end{proof}
 Let us recall one of the most important geometric invariants of a pseudo-Finsler manifold and let us introduce the analogous definitions for the vertical and horizontal parts of the curvature tensor presented in Definition \ref{RTopBotDefinition}. 
\begin{definition}
For a flagpole $v\in\Ad$ and flag $e\in T_{\pi(v)}\M$ such that $L(v)g_v(e,e)\neq g_v(v,e)^2$, denote by $K_v(e)$ the flag curvature of $(\M,L)$ with the following definition:
\begin{equation*}
K_v(e)=\frac{g_v(R_v(v,e)e,v)}{L(v)g_v(e,e)-g_v(v,e)^2}
\,,
\end{equation*}
 while the {\em vertical flag curvature} $K_v^\top(e)$ and the {\em  horizontal flag curvature} $K_v^\bot(e)$ are defined in the same way replacing $R$ by $R^\top$ and $R^\bot$, respectively. 
\end{definition}
\begin{corollary}\label{flagcurgen}
For each vertical $w$, admissible $v$ and $g_v$-horizontal $x$,
\begin{multline*}
K_v(w)
=
 K^\top_v(w)+\frac{g_v((\nabla_v\T)^v_ww-(\nabla_w(\T+\At))^v_vw,v)}{L(v)g_v(w,w)-g_v(v,w)^2}
\\
+
\frac{-g_v(\At^v_{(\T+\At)^v_vw}w,v)+g_v(\T^v_ww,(\T+\At)^v_vv)-g_v((\T+\At)^v_vw,\T^v_wv)}{L(v)g_v(w,w)-g_v(v,w)^2}
\,,
\end{multline*}
and
\begin{multline*}
K_v(x)
=  K^\top_v(x)+K^\bot_v(x)+
\frac{g_v((\nabla_v\At)^v_xx-(\nabla_x(\T+\At))^v_vx,v)}{L(v)g_v(x,x)-g_v(v,x)^2}
\\
+
\frac{-g_v(\T^v_{(\T+\At)^v_vx}x,v)+g_v(\At^v_xx,(\T+\At)^v_vv)-g_v((\T+\At)^v_vx,\At^v_xv)}{L(v)g_v(x,x)-g_v(v,x)^2}
\,.
\end{multline*}
\end{corollary}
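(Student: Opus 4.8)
The plan is to specialise the Unified fundamental equation \eqref{Unified} to the flag configuration, i.e.\ to substitute $e = v$ and $h = w$ (resp.\ $h = x$), pair with $v$ using $g_v$, and then divide by the flag denominator $L(v)g_v(w,w) - g_v(v,w)^2$ (resp.\ with $x$ in place of $w$). Concretely, first I would write, for the vertical flag,
\[
g_v(R_v(v,w)w,v)
=
g_v(R^\top_v(v,w)w,v) + g_v(R^\bot_v(v,w)w,v)
+ g_v\bigl((\nabla_v(\T+\At))^v_w w - (\nabla_w(\T+\At))^v_v w, v\bigr)
+ g_v\bigl((\T+\At)^v_w(\T+\At)^v_v w - (\T+\At)^v_v(\T+\At)^v_w w, v\bigr),
\]
and the analogous identity with $x$ replacing $w$.

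The next step is to simplify each term on the right. For the $R^\bot$ term: by Proposition \ref{RTopProposition}, $R^\bot_v(v,w)w = 0$ since $w$ is vertical, so the vertical-flag formula has no $K^\bot$ contribution; in the horizontal-flag case $R^\bot_v(v,x)x$ survives and produces the $K^\bot_v(x)$ term. For the derivative terms, I would expand $(\nabla_v(\T+\At))^v_w w$ and $(\nabla_w(\T+\At))^v_v w$ using linearity and then apply Lemma \ref{Lemma4}: since $\T^v_v v = (\T+\At)^v_v v$ up to vertical/horizontal bookkeeping and $\At^v_v\cdot$, $\T^v_x\cdot$ vanish on the appropriate inputs, the only surviving piece of $(\nabla_w(\T+\At))^v_v w$ after pairing with $v$ will be $g_v((\nabla_w(\T+\At))^v_v w, v)$, which I keep as is, while $(\nabla_v \T)^v_w w$ is the genuinely surviving derivative in the numerator (the $\At$-part of $(\nabla_v(\T+\At))^v_w w$ contributes $-\At^v_{(\T+\At)^v_v w} w$ by Lemma \ref{Lemma4}, which I move into the algebraic block). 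For the purely algebraic quadratic terms, I use skew-symmetry from Proposition \ref{TAProposition} to move operators across the $g_v$-pairing: $g_v((\T+\At)^v_v(\T+\At)^v_w w, v) = -g_v((\T+\At)^v_w w, (\T+\At)^v_v v)$, and similarly $g_v((\T+\At)^v_w(\T+\At)^v_v w, v) = -g_v((\T+\At)^v_v w, (\T+\At)^v_w v) = g_v((\T+\At)^v_v w, \T^v_w v)$ using that $(\T+\At)^v_w v$ has only its $\T$-part relevant when paired with the vertical $(\T+\At)^v_v w$-type term; collecting signs reproduces exactly the numerator displayed in the statement. The $R^\top$ term is just the definition of $K^\top_v(w)$ after dividing.

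The main obstacle I anticipate is the careful vertical/horizontal bookkeeping when distributing $(\T+\At)$ over the various inputs: one must repeatedly invoke that $\T^v$ vanishes on $g_v$-horizontal vectors and $\At^v$ on vertical ones, that $\At^v_v x = -\At^v_x v$ (part $(iii)$ of Proposition \ref{TAProposition}), and that $\T^v_v v$ need not vanish, so several would-be-zero terms in the Riemannian case survive here — precisely those producing the extra pieces $g_v(\T^v_w w, (\T+\At)^v_v v)$ and $-g_v((\T+\At)^v_v w, \T^v_w v)$ in the numerator. Once the numerator is assembled, dividing by $L(v)g_v(w,w) - g_v(v,w)^2$ (which is nonzero by the flag hypothesis) and recognising $g_v(R^\top_v(v,w)w,v)/(\cdots) = K^\top_v(w)$ finishes the vertical case; the horizontal case is verbatim the same computation with $w \rightsquigarrow x$, except that now $R^\bot_v(v,x)x \neq 0$ adds the term $K^\bot_v(x)$.
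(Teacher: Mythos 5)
Your proposal follows the paper's proof essentially verbatim: specialise \eqref{Unified} at $e=v$, $h=w$ (resp.\ $h=x$), pair with $v$, kill $R^\bot_v(\cdot,\cdot)w$ via Proposition \ref{RTopProposition}, convert $(\nabla_v\At)^v_w$ and $(\nabla_v\T)^v_x$ via Lemma \ref{Lemma4}, and handle the quadratic terms by the skew-symmetry of Proposition \ref{TAProposition} together with $\At^v_w=0$ and $\T^v_x=0$. Only a cosmetic remark: your intermediate equality $-g_v((\T+\At)^v_vw,(\T+\At)^v_wv)=g_v((\T+\At)^v_vw,\T^v_wv)$ should carry a minus sign (the $\At$-part drops simply because $\At^v_w=0$ for vertical $w$, and note $(\T+\At)^v_vw$ is $g_v$-horizontal, not vertical), but the numerator you collect at the end is the correct one, matching the statement.
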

\begin{proof}
By \eqref{Unified}, 
recalling that $R^\bot_v(\cdot,\cdot)w=0$
\begin{multline*}
g_v(R_v(v,w)w,v)
=
g_v(R^\top_v(v,w)w+(\nabla_v(\T+\At))^v_ww-(\nabla_w(\T+\At))^v_vw,v)
\\
+g_v(\T^v_ww,(\T+\At)^v_vv)
-
g_v((\T+\At)^v_vw,\T^v_wv)
\,,
\end{multline*}
and similarly
\begin{multline*}
g_v(R_v(v,x)x,v)
=
g_v((R^\top+R^\bot)_v(v,x)x+(\nabla_v(\T+\At))^v_xx-(\nabla_x(\T+\At))^v_vx,v)
\\
+
g_v(\At^v_xx,(\T+\At)^v_vv)
-
g_v((\T+\At)^v_vx,\At^v_xv)
\,.
\end{multline*}
To conclude, observe that by Lemma \ref{Lemma4}
\begin{equation*}
(\nabla_v \At)^v_w = -\At^v_{(\T+\At)^v_vw}
\end{equation*}
and
\begin{equation*}
(\nabla_v \T)^v_x = -\T^v_{(\T+\At)^v_vx}
\,.
\end{equation*}
\end{proof}
\begin{theorem}[Generalised Gauss equation and dual Gauss equation]\label{gaussanddual}
For each vertical admissible $ v $ and vertical vectors $ s $, $ u $ and $ w $ at $ p $,
\begin{multline}\tag*{\{0'\}}\label{0p}
(R_v(w,u)s)^\top_v
=
\hat{R}_v(w,u)s
 +
\T^v_w \T^v_u s
-
\T^v_u \T^v_w s
\\
+
(\hat\nabla_u\hat Q)^v_ws - (\hat\nabla_w\hat Q)^v_us
+
\hat Q^v_u\hat Q^v_ws - \hat Q^v_w\hat Q^v_us
\\
-
P_v(w,s,\hat Q^v_uv+\T^v_uv)^\top_v
+
P_v(u,s,\hat Q^v_wv+\T^v_wv)^\top_v
\\
-
2C^\sharp_v(\mathbf T^v_ws,\hat Q^v_uv+\mathbf T^v_uv)^\top_v
+
2C^\sharp_v(\mathbf T^v_us,\hat Q^v_wv+\mathbf T^v_wv)^\top_v
\,. 
\end{multline}
 For each horizontal $v$ and $g_v$-horizontal vectors $x$, $y$ and $z$,
\begin{multline}\tag*{\{4'\}}\label{4p}
(R_v(x,y)z)^\bot_v
=
\tilde{R}^\ast_v(\tilde{x},\tilde{y})\tilde{z}
 +
\At^v_x \At^v_y z
- 
\At^v_y \At^v_x z 
+
\At^v_z\At^v_y x
-
\At^v_z\At^v_x y
\\
+
2 \At^v_z C^\sharp(y,\At^v_xv)^\top_v
+
2 \At^v_y C^\sharp_v(z,\At^v_xv)^\top_v
-
2 \At^v_z C^\sharp_v(x,\At^v_yv)^\top_v
-
2 \At^v_x C^\sharp_v(z,\At^v_yv)^\top_v
\\
+
((\nabla_x \tilde{Q})^v_y z - (\nabla_y \tilde{Q})^v_x z)^\bot_v
+
\tilde{Q}^v_y \tilde{Q}^v_x z - \tilde{Q}^v_x \tilde{Q}^v_y z
+
((\dot\partial\tilde Q)^v_yz(\At^v_xv))^\bot_v
-
((\dot\partial\tilde Q)^v_xz(\At^v_yv))^\bot_v
\\
-
P_v(y,z,\At^v_xv)^\bot_v
+
P_v(x,z,\At^v_yv)^\bot_v
\,.
\end{multline}
\end{theorem}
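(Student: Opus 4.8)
The plan is to obtain both identities by extracting, respectively, the vertical and the horizontal component of the ambient curvature tensor from the fundamental equations already established in Corollary~\ref{fundeqgeneral}, and then substituting into the result the intrinsic descriptions of $R^\top$ and $R^\bot$ furnished by Propositions~\ref{RTopVProposition} and~\ref{RBotVProposition}. Throughout, I would use the standing non-degeneracy hypothesis in the form: if $v$ is vertical then $\ver_p$ is $g_v$-non-degenerate, and if $v$ is horizontal then its $g_v$-orthogonal complement $\hor_p$ is $g_v$-non-degenerate; together with the elementary identities $g_v(a,s')=g_v(a^\top_v,s')$ for vertical $s'$ and $g_v(a,z')=g_v(a^\bot_v,z')$ for $g_v$-horizontal $z'$, valid for any $a\in T_p\M$.

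For \ref{0p} I would fix a vertical admissible $v$ and vertical $s,u,w$ at $p$. Pairing identity \ref{0} of Corollary~\ref{fundeqgeneral} with an arbitrary vertical $s'$ and moving $\T^v_u$, $\T^v_w$ across $g_v$ by their skew-symmetry (part $(i)$ of Proposition~\ref{TAProposition}) gives, after using non-degeneracy of $g_v$ on $\ver_p$,
\[
(R_v(w,u)s)^\top_v=R^\top_v(w,u)s+\T^v_w\T^v_u s-\T^v_u\T^v_w s\,.
\]
The same relation follows likewise from \eqref{Unified}: the $R^\bot$ summand vanishes by Proposition~\ref{RTopProposition}, and upon applying $(\cdot)^\top_v$ and using $\At^v_u=\At^v_w=0$ the algebraicity identity \eqref{nablaT+Au} of Lemma~\ref{Lemma5} makes the $\nabla(\T+\At)$ and $(\T+\At)^2$ contributions telescope to $\T^v_w\T^v_u s-\T^v_u\T^v_w s$. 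It then only remains to substitute for $R^\top_v(w,u)s$ the expression of Proposition~\ref{RTopVProposition} — equivalently, that proposition applied to $R^\top_v(u,w)s$ combined with the skew-symmetry of $R^\top$ in its first two slots, which is immediate from Definition~\ref{RTopBotDefinition} — which introduces the intrinsic fibre curvature $\hat R$, the iterated and differentiated $\hat Q$-terms, the vertical-derivative terms $P_v(\cdot,s,\cdot)^\top_v$, and the Cartan terms $C^\sharp_v(\T^v_\bullet s,\cdot)^\top_v$. Collecting these yields \ref{0p}.

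For \ref{4p} the argument is entirely dual. Fixing horizontal $v$ and $g_v$-horizontal $x,y,z$ at $p$, I would pair identity \ref{4} of Corollary~\ref{fundeqgeneral} with an arbitrary $g_v$-horizontal $z'$, use the skew-symmetry of $\At^v_x$ and $\At^v_y$, and invoke non-degeneracy of $g_v$ on $\hor_p$ to get
\[
(R_v(x,y)z)^\bot_v=R^\bot_v(x,y)z+\At^v_x\At^v_y z-\At^v_y\At^v_x z\,;
\]
alternatively this follows from \eqref{Unified} by applying $(\cdot)^\bot_v$, noting that $R^\top_v(x,y)z$ is vertical by Proposition~\ref{RTopProposition} and that, via \eqref{nablaT+Ay} of Lemma~\ref{Lemma5} and $\T^v_x=\T^v_y=0$, the $\nabla(\T+\At)$ and $(\T+\At)^2$ terms reduce to $\At^v_x\At^v_y z-\At^v_y\At^v_x z$. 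Substituting for $R^\bot_v(x,y)z$ the expression of Proposition~\ref{RBotVProposition} then brings in the lifted base curvature $\tilde R^\ast_v(\tilde x,\tilde y)\tilde z$, the extra quadratic $\At$-terms $\At^v_z\At^v_y x-\At^v_z\At^v_x y$, the iterated and differentiated $\tilde Q$-terms, the horizontal vertical-derivative terms $P_v(\cdot,z,\cdot)^\bot_v$, and the fibre-derivative terms $((\dot\partial\tilde Q)^v_\bullet z(\cdot))^\bot_v$; assembling these produces \ref{4p}.

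The genuinely routine parts are the linear algebra of the $g_v$-orthogonal splitting and the juggling of skew-symmetries. The main obstacle I expect is the sign- and index-bookkeeping when the long expressions of Propositions~\ref{RTopVProposition} and~\ref{RBotVProposition} are inserted — in particular, keeping straight the genuinely Finslerian correction terms built from the vertical derivative $P$ of the Chern connection and from the fibre derivatives of $\hat Q$ and $\tilde Q$, which have no analogue in the Riemannian fundamental equations of \cite{1966ONeill}.
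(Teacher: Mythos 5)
Your proposal is correct and follows essentially the same route as the paper: the paper's proof is exactly the combination of identities \ref{0} and \ref{4} of Corollary \ref{fundeqgeneral} (read off as component identities via skew-symmetry of $\T$, $\At$ and non-degeneracy of $g_v$ on the vertical, respectively $g_v$-horizontal, subspace) with the substitutions of Propositions \ref{RTopVProposition} and \ref{RBotVProposition} for $R^\top$ and $R^\bot$. Your remark that the same component identities can alternatively be obtained directly from \eqref{Unified} together with Lemma \ref{Lemma5} is a harmless (and consistent) variant of the same bookkeeping.
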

\begin{proof}
From \ref{0} and \ref{4} together with Propositions \ref{RTopVProposition} and \ref{RBotVProposition}.
\end{proof}
\begin{corollary}\label{verflagpole}
For $v\in\ver$ and $w$ vertical,  assuming that the plane $\{v,w\}$ is non-degenerate,
\begin{multline*}
K_v(w)
=
\hat K_v(w)
- 
\frac{g_v(\T^v_ww,\T^v_vv)-g_v(\T^v_vw,\T^v_vw)}{L(v)g_v(w,w)-g_v(v,w)^2}
\\
 - 
\frac{g_v( P_v (w,w,\T^v_vv)-(\hat\nabla_v\hat Q)^v_ww,v)+C_v(w,\hat Q^v_vu,\T^v_vv)}{L(v)g_v(w,w)-g_v(v,w)^2}
\end{multline*}
where $\hat K$ denotes the flag curvature intrinsic to the submersion fibers.
\end{corollary}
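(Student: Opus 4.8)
The plan is to derive the identity from equation $\{0\}$ of Corollary \ref{fundeqgeneral} together with Proposition \ref{RTopVProposition}, i.e.\ from the Generalised Gauss equation $\{0'\}$ of Theorem \ref{gaussanddual}. First I would apply $\{0\}$ with $(w,u,s,s')=(v,w,w,v)$, so that its left-hand side is exactly the numerator $g_v(R_v(v,w)w,v)$ of $K_v(w)$. Since $v$ is vertical, part $(ii)$ of Proposition \ref{TAProposition} gives $\T^v_w v=\T^v_v w$, so the two $\T$-quadratic terms of $\{0\}$ collapse to $g_v(\T^v_v w,\T^v_v w)-g_v(\T^v_w w,\T^v_v v)$; dividing by $D:=L(v)g_v(w,w)-g_v(v,w)^2$, which is nonzero because the plane $\{v,w\}$ is assumed non-degenerate (and which is also what makes $\hat K_v(w)$ meaningful), one gets
\[
K_v(w)=K^\top_v(w)-\frac{g_v(\T^v_w w,\T^v_v v)-g_v(\T^v_v w,\T^v_v w)}{D}.
\]

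It then remains to unfold $K^\top_v(w)=g_v(R^\top_v(v,w)w,v)/D$. I would substitute $(u,w,s)\mapsto(v,w,w)$ into Proposition \ref{RTopVProposition} and pair each term with $v$. The cleanup rests on two facts: $g_v(\cdot,v)$ kills every $g_v$-horizontal vector (so the pieces of $R^\top_v(v,w)w$ that land in the horizontal distribution disappear), and homogeneity in the form $C_v(v,\cdot,\cdot)=0$, equivalently $C^\sharp_v(v,\cdot)=0$. Homogeneity collapses $\hat Q^v_v$ to the single term $-C^\sharp_v(\T^v_v v,\cdot)^\top_v$, gives $\hat Q^v_v v=0$, and annihilates, after pairing with $v$, both $C^\sharp_v(\cdot,\cdot)^\top_v$-correction terms of Proposition \ref{RTopVProposition} as well as the product $\hat Q^v_v\hat Q^v_w w$. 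What survives is: $g_v(\hat R_v(v,w)w,v)=D\,\hat K_v(w)$ by definition of the intrinsic flag curvature; the first fibre-derivative term, which reduces (using $\hat Q^v_v v=0$) to $-P_v(w,w,\T^v_v v)^\top_v$ and hence to $-g_v(P_v(w,w,\T^v_v v),v)$; the surviving derivative-of-$\hat Q$ term $g_v((\hat\nabla_v\hat Q)^v_w w,v)$; and, using $\hat Q^v_v w=-C^\sharp_v(\T^v_v v,w)^\top_v$, skew-symmetry of $\T^v_v$ and once more $C_v(v,\cdot,\cdot)=0$, the product term $-g_v(\hat Q^v_w\hat Q^v_v w,v)=-C_v(w,\hat Q^v_v w,\T^v_v v)$. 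Collecting these over $D$ reproduces the claimed formula.

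Two cancellations deserve care. That $g_v((\hat\nabla_w\hat Q)^v_v w,v)=0$ follows by expanding the covariant derivative of the anisotropic tensor $\hat Q$ through \eqref{covder}--\eqref{fiberder} with a fibre-parallel extension $V$ of $v$ (so $\hat\nabla^v V=0$): the undifferentiated-$\hat Q$ contribution dies by homogeneity, and the differentiated one equals $w\big(g_V(\hat Q^V_V V',V)\big)$, which vanishes identically because $g_V(\hat Q^V_V\,\cdot\,,V)\equiv 0$ (again $C_V(\cdot,\cdot,V)=0$). The harder step, and the main obstacle, is showing that the second fibre-derivative term $P_v(v,w,\hat Q^v_w v+\T^v_w v)^\top_v$ has vanishing $g_v$-product with $v$: for this one uses the symmetry $P_v(E,H,B)=P_v(H,E,B)$ of the vertical derivative of the torsion-free Chern connection to move $v$ into the first slot, together with the fibre-direction derivative of the metric-almost-compatibility identity and homogeneity $C_v(v,\cdot,\cdot)=0$. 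Once these two points are settled, the remainder is routine substitution.
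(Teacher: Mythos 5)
Your derivation is correct and is essentially the argument the paper intends: its proof merely defers to the analogous submanifold computation in \cite[Corollary 6]{2022HuberJavaloyes}, and what you carry out — equation $\{0\}$ with the symmetry $\T^v_wv=\T^v_vw$ from Proposition \ref{TAProposition}, then Proposition \ref{RTopVProposition} with every term containing $C^\sharp_v(v,\cdot)$, $\hat Q^v_vv$ or a $v$-slot of the Cartan tensor killed by homogeneity, plus the vanishing of $g_v((\hat\nabla_w\hat Q)^v_vw,v)$ via a fibre-parallel extension — is exactly that analogue (and your reading of the statement's stray ``$u$'' as ``$w$'' is the intended one). For the step you single out as the main obstacle, $g_v(P_v(v,w,\hat Q^v_wv+\T^v_wv),v)=0$, there is no need to re-derive it from the symmetry of $P$ and a differentiated compatibility identity: it is precisely the identity $g_v(P_v(v,\cdot,\cdot),v)=0$ of \cite[Eq. (56)]{Jav20}, which the paper itself invokes in the proof of Corollary \ref{horflagpole}.
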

\begin{proof}
The proof is completely analogous to  \cite[Corollary 6]{2022HuberJavaloyes}.
\end{proof}
\begin{corollary}\label{horflagpole}
For $v\in\hor$ and $x$ $g_v$-horizontal, denoting by $\tilde v$ and $\tilde x$ their projections, and $w$ vertical, assuming that the planes $\{v,w\}$ and $\{v,x\}$ are non-degenerate,
\begin{align*}
K_v(w)&= \frac{g_v((\nabla_v\T)^v_ww,v)+g_v(\At^v_vw,\At^v_vw)-g_v(\T^v_wv,\T^v_wv))}{L(v)g_v(w,w)-g_v(v,w)^2},\\
K_v(x)&=\tilde K_{\tilde v}(\tilde x)
 - \frac{3g_v(\At^v_x v,\At^v_x v)}{L(v)g_v(x,x)-g_v(v,x)^2}
\,.
\end{align*}
\end{corollary}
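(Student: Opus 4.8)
The plan is to obtain both formulas by feeding a \emph{horizontal} flagpole $v$ into Corollary \ref{flagcurgen} and then collapsing the right-hand side with the algebraic identities for $\T$ and $\At$; these are the pseudo-Finsler analogues of the two sectional-curvature formulas in O'Neill's Corollary~1, and the bookkeeping mirrors his. The standing simplifications I will use are: $\T^v_v=0$ (because $v^\top_v=0$) and $\At^v_vv=0$ (Proposition \ref{TAProposition}\,$(iii)$), so that $(\T+\At)^v_vv=0$ and $(\T+\At)^v_vw=\At^v_vw$ for vertical $w$; the $g_v$-orthogonality of the vertical and the $g_v$-horizontal subspaces; the skew-symmetry of $\T^v_e$ and $\At^v_e$ (Proposition \ref{TAProposition}\,$(i)$); the relation $\At^v_xv=-\At^v_vx$ for $g_v$-horizontal $x$ (Proposition \ref{TAProposition}\,$(iii)$); and homogeneity of the Cartan tensor in the form $C^\sharp_v(v,\cdot)=0$.

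For the first identity I start from the first formula of Corollary \ref{flagcurgen}. By Proposition \ref{RTopProposition} one has $R^\top_v(\cdot,\cdot)w=0$, so $K^\top_v(w)=0$. The term $g_v(\T^v_ww,(\T+\At)^v_vv)$ vanishes since $(\T+\At)^v_vv=0$, and the term $g_v((\T+\At)^v_vw,\T^v_wv)$ vanishes since $(\T+\At)^v_vw=\At^v_vw$ is $g_v$-horizontal while $\T^v_wv$ is vertical. Using skew-symmetry of $\At^v$ twice together with $\At^v_xv=-\At^v_vx$ converts $-g_v(\At^v_{(\T+\At)^v_vw}w,v)$ into $g_v(\At^v_vw,\At^v_vw)$. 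It then remains to check that $g_v((\nabla_w(\T+\At))^v_vw,v)=g_v(\T^v_wv,\T^v_wv)$: one expands $(\nabla_w(\T+\At))^v_vw$ through Lemma \ref{Lemma4} (whose relevant identity contributes the summand $-\T^v_{\T^v_wv}w$), Lemma \ref{Lemma5} and the product rule for covariant derivatives of anisotropic tensors, contracts with $v$, and uses the skew-symmetries once more. Assembling the surviving terms yields the stated expression for $K_v(w)$.

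For the second identity I start from the second formula of Corollary \ref{flagcurgen}. Here $K^\top_v(x)=0$ as well, since $R^\top_v(v,x)x=-2C^\sharp_v(R_v(v,x)v,x)^\top_v$ is $g_v$-orthogonal to the horizontal vector $v$. As before, $g_v(\At^v_xx,(\T+\At)^v_vv)=0$, $g_v(\T^v_{(\T+\At)^v_vx}x,v)=0$ (the vector $\T^v_{\At^v_vx}x$ is vertical), and $-g_v((\T+\At)^v_vx,\At^v_xv)=g_v(\At^v_xv,\At^v_xv)$; evaluating the covariant-derivative terms through Lemma \ref{Lemma5} and the skew-symmetries gives $g_v\big((\nabla_v\At)^v_xx-(\nabla_x(\T+\At))^v_vx,v\big)=-2\,g_v(\At^v_xv,\At^v_xv)$, whence $K_v(x)=K^\bot_v(x)-g_v(\At^v_xv,\At^v_xv)/(L(v)g_v(x,x)-g_v(v,x)^2)$. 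To finish I compute $K^\bot_v(x)$ from Proposition \ref{RBotVProposition} with arguments specialised to $(v,x,x)$: because $\At^v_vv=0$, $C^\sharp_v(v,\cdot)=0$ and $\tilde Q^v_v=\tilde Q^v_xv=0$ (the latter two following from $\At^v_vv=0$ and homogeneity of $C$), almost every term drops; the curvature term contributes $\tilde g_{\tilde v}(\tilde R_{\tilde v}(\tilde v,\tilde x)\tilde x,\tilde v)$ via Lemma \ref{Lemma1}, and the surviving $\At$-terms together with the leftover $\tilde Q$-, $P$- and $\dot\partial\tilde Q$-corrections must combine, after contraction with $v$, to $-2\,g_v(\At^v_xv,\At^v_xv)$. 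Since $L(v)g_v(x,x)-g_v(v,x)^2=\tilde L(\tilde v)\tilde g_{\tilde v}(\tilde x,\tilde x)-\tilde g_{\tilde v}(\tilde v,\tilde x)^2$ by the pseudo-Finsler submersion hypothesis and Lemma \ref{Lemma1}, this gives $K^\bot_v(x)=\tilde K_{\tilde v}(\tilde x)-2\,g_v(\At^v_xv,\At^v_xv)/(L(v)g_v(x,x)-g_v(v,x)^2)$, and substituting back produces the claimed formula.

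The main obstacle is precisely this last round of bookkeeping: verifying that all the anisotropic correction terms — those built from $C^\sharp$, from $\tilde Q$ and its covariant and fiber derivatives, and from the vertical derivative $P$ of the Chern connection — collapse into the clean expressions above once one entry is the horizontal flagpole $v$. The cancellations hinge on $C^\sharp_v(v,\cdot)=0$, on $\At^v_vv=0$, on $0$-homogeneity of $\nabla$ (so that $P_v(\cdot,\cdot,v)=0$), and on the skew-symmetry of $\nabla\T$ and $\nabla\At$; it is exactly these cancellations that make the final formulas agree with the classical Riemannian ones with no extra Finslerian terms, and checking them carefully is more delicate than the analogous step in O'Neill's proof of his Corollary~1.
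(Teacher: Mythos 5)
Your overall strategy (specialise Corollary \ref{flagcurgen} and Proposition \ref{RBotVProposition} to a horizontal flagpole and collapse the terms with the algebraic properties of $\T$, $\At$, $C^\sharp$, $\tilde Q$) is the same as the paper's, and the simplifications you actually carry out are correct: $K^\top_v(w)=K^\top_v(x)=0$ (though your stated reason for the first is a misquote — Proposition \ref{RTopProposition} asserts $R^\bot_v(e,h)w=0$, not $R^\top_v(\cdot,\cdot)w=0$; the correct reason is that $R^\top$ is vertical-valued and $v$ is horizontal), the vanishing of the terms containing $(\T+\At)^v_vv$, the conversion $-g_v(\At^v_{\At^v_vw}w,v)=g_v(\At^v_vw,\At^v_vw)$, the identity $g_v\bigl((\nabla_v\At)^v_xx-(\nabla_x(\T+\At))^v_vx,v\bigr)=-2g_v(\At^v_xv,\At^v_xv)$ (this one does follow from Lemmas \ref{Lemma4}--\ref{Lemma5} and skew-symmetry), and the invariance of the denominator under $\dif\sigma$. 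The genuine gap in the first formula is the remaining identity $g_v((\nabla_w(\T+\At))^v_vw,v)=g_v(\T^v_wv,\T^v_wv)$. Its $\T$-part is fine (Lemma \ref{Lemma4} gives $-\T^v_{\T^v_wv}w$, and you then need the symmetry $\T^v_{\T^v_wv}w=\T^v_w\T^v_wv$ on vertical pairs plus skew-symmetry), but the $\At$-part, namely $g_v((\nabla_w\At)^v_vw,v)=0$, is \emph{not} delivered by the toolkit you invoke: Lemma \ref{Lemma4} requires a vertical vector in the second slot (here it is the horizontal $v$), and Lemma \ref{Lemma5} only gives the vertical component $((\nabla_w\At)^v_vw)^\top_v$, whereas contracting with the horizontal $v$ picks out the horizontal component. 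This is exactly the point where the paper does real work, expanding $(\nabla_w\At)^v_vw$ with a horizontal extension $V$ satisfying $\nabla^v_vV=0$ and a vertical extension $W$, and using \eqref{Lieconversion} to show the three resulting terms cancel; without such a computation your first formula is not established.

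For the second formula the same pattern occurs at the evaluation of $K^\bot_v(x)$ from Proposition \ref{RBotVProposition}: the $\At$-terms and the trivially vanishing terms are as you say, but the cancellation of $((\nabla_v\tilde Q)^v_xx-(\nabla_x\tilde Q)^v_vx)^\bot_v$, of $((\dot\partial\tilde Q)^v_vx(\At^v_xv))^\bot_v$ and of $P_v(v,x,\At^v_xv)^\bot_v$ after contraction with $v$ is only asserted (``must combine to $-2g_v(\At^v_xv,\At^v_xv)$''), and the hinges you list do not cover it. In particular $P_v(\cdot,\cdot,v)=0$ (homogeneity in the third slot) is not the fact needed; one needs $g_v(P_v(v,x,b),v)=0$ for arbitrary $b$, which the paper imports from \cite[Eq.\ (56)]{Jav20}, and one needs the explicit computations showing $g_v((\nabla_v\tilde Q)^v_xx,v)=0$ and $(\nabla_x\tilde Q)^v_vx=-(\dot\partial\tilde Q)^v_vx(\At^v_xv)$, so that the fiber-derivative term cancels against the covariant-derivative term rather than against anything else. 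These verifications — which you yourself flag as ``the main obstacle'' — constitute the substance of the paper's proof; as written, your argument establishes the correct skeleton and the correct intermediate targets, but leaves unproved precisely the two identities on which both formulas rest.
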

\begin{proof}
The expression for $K_v(w)$ follows from $\{2\}$ in Corollary \ref{fundeqgeneral} taking into account that
\[g_v(\At^v_{\At^v_vw}w,v)=-g_v(\At^v_{\At^v_vw}v,w)=g_v(\At^v_v{\At^v_vw},w)=-g_v(\At^v_vw,\At^v_vw)\]
by skew-symmetry and part $(iii)$ of Proposition \ref{TAProposition}, and $g_v(\nabla_w\At)^v_vw,w)=0$. To check this take a horizontal extension $V$ with $\nabla^v_vV=0$, and $W$ a vertical extension of $w$, then
\begin{equation}\label{depaso}
g_v(\nabla_w\At)^v_vw,w)=g_v(\nabla^v_w(\At^V_VW),v)-g_v(\At^v_{\nabla^v_wV}w,v)-g_v(\At^v_v(\nabla^v_wW),v).
\end{equation}
Using again skew-symmetry and part $(iii)$ of Proposition \ref{TAProposition}, 
\[g_v(\At_v(\nabla^v_wW),v)=-g_v(\At_vv,\nabla^v_wW)=0,\]
\[g_v(\nabla^v_w(\At^V_VW),v)=w(g_V(\At^V_VW,V))-g_v(\At^v_vw,\nabla^v_wV)=-g_v(\At^v_vw,\At^v_vw),\]
since $g_V(\At^V_VW,V)=-g_V(W,\At^V_VV)=0$ and $(\nabla^v_wV)^\bot_v=\At^v_vw$ by \eqref{Lieconversion}. Finally, 
\begin{align*}
g_v(\At^v_{\nabla^v_wV}w,v)&=-g_v(\At^v_{\nabla^v_wV}v,w)=g_v(\At^v_v{(\nabla^v_wV)},w)=-g_v(\At^v_vw,{\nabla^v_wV})\\
&=
-g_v(\At^v_vw,\At^v_vw).
\end{align*}
Applying the last three identities in \eqref{depaso}, we conclude that $g_v(\nabla_w\At)^v_vw,w)=0$.

From \ref{4p}, by properties of the Cartan tensor,  part $(iii)$ of Proposition \ref{TAProposition}, 
 the identity $\tilde{Q}^v_v=\tilde{Q}^vv=0$ for $v$ horizontal,  and taking into account that $g_v(\At^v_v\At^v_xx,v)$ and  $g_v(\At^v_v C^\sharp_v(x,\At^v_xv)^\top_v,v)$ also vanish by skew-symmetry, we get 
\begin{align}\nonumber
g_v(R_v(v,x)x,v)
& =
g_v(\tilde{R}^\ast_v(\tilde{v},\tilde{x})\tilde{x}
+ 3 \At^v_x \At^v_x v
+ (\nabla_v \tilde{Q})^v_x x - (\nabla_x \tilde{Q})^v_v x,v)
\\ & \qquad
- g_v((\dot\partial\tilde Q)^v_vx(\At^v_xv)+ P_v(v,x,\At^v_xv),v)
\,.\label{eqrefer}
\end{align}
By definition of $\nabla\tilde Q$, for a locally horizontal admissible extension $V$ of $v$ and locally $g_V$-horizontal extension $X$ of $x$ satisfying $\nabla^v_xV=\At^v_xv$ and $\nabla^v_vV=0$
\begin{equation*}
(\nabla_v\tilde Q)^v_x x
=
\nabla^v_v\tilde Q^V_X X
-
\tilde Q^v_{\nabla^v_vX}x
-
\tilde Q^v_x\nabla^v_vX
\end{equation*}
such that, using that $g_v(\tilde Q^v_yz,v)=0$ for $v$ horizontal and arbitrary $y$ and $z$  by definition of $\tilde Q$ and properties of the Cartan tensor, and, in particular $g_V(\tilde Q^V_XX,V)=0$,  
\begin{equation*}
g_v((\nabla_v\tilde Q)^v_x x,v)
=
v(g_V(Q^V_XX,V))
-
g_v(\tilde Q^v_xx,\nabla^v_vV)
-
2C_v(\tilde Q^v_xx,v,\nabla^v_vV)
=
0
\,,
\end{equation*}
furthermore, since $Q^V_VX=0$,
\begin{multline*}
(\nabla_x\tilde Q)^v_vx
=
\nabla^v_x\tilde Q^V_V X
-
\tilde Q^v_{\nabla^v_xV}x
-
\tilde Q^v_v\nabla^v_xX
-
(\dot\partial\tilde Q)^v_vx(\At^v_xv)
=
-
\tilde Q^v_{\At^v_xv}x
-
(\dot\partial\tilde Q)^v_vx(\At^v_xv)
\\
=
-
(\dot\partial\tilde Q)^v_vx(\At^v_xv).
\end{multline*}
Finally, from \cite[Eq. (56)]{Jav20},
\begin{equation*}
g_v(P_v(v,x,\At^v_xv),v) = 0.
\end{equation*}
 All the above identities imply that in \eqref{eqrefer} all the terms to the right are zero but the first two. Taking into account that by skew-symmetry $g_v(\At^v_x \At^v_x v,v)=-g_v(\At^v_x v,\At^v_x v)$, we conclude. 
\end{proof}
\begin{remark}
 The expressions for the flag curvatures in the last corollary were previously obtained in \cite[Theorem 5.1]{Vitorio2010} (see also \cite[Theorem 5.12]{Vitorio2017} for the second one). In the first identity, the covariant derivative of $\T$ in this reference is slightly different from ours. 
On the other hand, in the positive definite case, $g_v(\At^v_xv,\At^v_xv)$ is non-negative. As a consequence of this corollary, we have just shown that Finsler submersions never   increase   the flag curvature along horizontal flags, in the sense that
\begin{equation}
K_v(x) \leq \tilde K_{\tilde v}(\tilde x)
\end{equation}
along each horizontal $v$ and for every $g_v$-horizontal $x$. This fact has been previously derived by a different method in \cite[Theorem 6.1]{2001AlDur}.
\end{remark}

\section{Submersions whose fibers are totally geodesic}\label{totallygeo}

 Let us study pseudo-Finsler submersions having fibers which are totally geodesic, namely, the geodesics of their fibers are also geodesics of the total space. In the classical case of Riemannian submersions, this corresponds with the submersions having $\T=0$, and they can be characterized by the fact that all the fibers are isometric. We will see that the Finslerian case is more complex.   Observe that the fibers of a pseudo-Finsler submersion are totally geodesic if and only if $\T^v_vv=0$ for all admissible vertical $v$. This turns out to be equivalent to $\T^v_vu=0$ for all admissible vertical $v$ and arbitrary $u$ (see \cite[Prop. 4]{2022HuberJavaloyes} and the basic properties of $\T$). 
\begin{proposition}
 Given two pseudo-Finsler submersions  $\sigma_1:(\M_1,L_1)\rightarrow (\M_2,L_2)$ and $\sigma_2:(\M_2,L_2)\rightarrow (\M_3,L_3)$, it holds that
\begin{enumerate}[(i)]
\item their composition is also a pseudo-Finsler submersion,
\item if this composition has totally geodesic fibers, then so does the submersion onto the final base.
\end{enumerate}
\end{proposition}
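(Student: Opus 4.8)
The plan is to reduce everything to the tangent-space picture of Lemma \ref{Lemma1} and to the geodesic characterisation of totally geodesic fibers. Write $\sigma=\sigma_2\circ\sigma_1$ and, for $i=1,2$, let $\ver^i,\hor^i$ be the vertical and horizontal distributions of $\sigma_i$, reserving $\ver,\hor$ for $\sigma$. A composition of surjective maps with surjective differential is again one, so $\sigma$ is a submersion; at each $p\in\M_1$ one has $\ver_p=\ker(\dif\sigma_2\circ\dif\sigma_1)\vert_p=(\dif\sigma_1\vert_p)^{-1}(\ver^2_{\sigma_1(p)})$, so that $\ver^1_p\subseteq\ver_p$ and $\dif\sigma_1(\ver_p)=\ver^2_{\sigma_1(p)}$.

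For (i) I would first prove the key identification: an $L_1$-admissible $v\in T_p\M_1$ is $\sigma$-horizontal if and only if $v\in\hor^1_p$ and $\dif\sigma_1(v)\in\hor^2_{\sigma_1(p)}$. Indeed, if $v\in\hor_p$ then $g_v(v,\cdot)$ vanishes on $\ver_p\supseteq\ver^1_p$, so $v\in\hor^1_p$ and $\tilde v:=\dif\sigma_1(v)$ is $L_2$-admissible; and for any $\tilde w\in\ver^2_{\sigma_1(p)}$ one writes $\tilde w=\dif\sigma_1(w)$ with $w\in\ver_p$ and obtains $\tilde g_{\tilde v}(\tilde v,\tilde w)=g_v(v,w)=0$ from \eqref{gv2}, so $\tilde v\in\hor^2_{\sigma_1(p)}$; the converse is the same computation read backwards, again through \eqref{gv2}. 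Granting this, condition (ii) of the definition for $\sigma$ is merely the chain $L_1(v)=L_2(\dif\sigma_1(v))=L_3(\dif\sigma(v))$, by condition (ii) for $\sigma_1$ and then, legitimately since $\dif\sigma_1(v)\in\hor^2$, for $\sigma_2$. For condition (i) I would note that the $\sigma$-fibers are fibered by the $\sigma_2$-fibers via $\sigma_1$, and check that $L_1$ restricts to a pseudo-Finsler metric on each $\sigma$-fiber using the $g_v$-orthogonal splitting $\ver_p=\ver^1_p\oplus(\ver_p\cap(\ver^1_p)^{\perp_v})$: its first summand is nondegenerate because $\sigma_1$ is a pseudo-Finsler submersion, and when $v$ is $\sigma_1$-horizontal Lemma \ref{Lemma1} identifies the second summand $g_v$-isometrically with $(\ver^2_{\sigma_1(p)},\tilde g_{\dif\sigma_1(v)})$, which is nondegenerate because $\sigma_2$ is a pseudo-Finsler submersion.

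For (ii) the idea is that, picking $p$ with $\sigma_1(p)\in\sigma_2^{-1}(q)$, the $\sigma$-fiber $\fiber:=\sigma^{-1}(\sigma(p))$ equals $\sigma_1^{-1}(\sigma_2^{-1}(q))$, so $\sigma_1$ restricts to a pseudo-Finsler submersion $(\fiber,L_1\vert)\to(\sigma_2^{-1}(q),L_2\vert)$: its fibers are the $\sigma_1$-fibers, nondegenerate pseudo-Finsler submanifolds of $\M_1$ and hence of $\fiber$; its vertical distribution is $\ver^1$; and a vector of $T\fiber$ is horizontal for this restriction exactly when it is $\sigma_1$-horizontal, so condition (ii) for $\sigma_1$ supplies length preservation. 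Now assume $\sigma$ has totally geodesic fibers and let $\bar\gamma$ be a geodesic of $(\sigma_2^{-1}(q),L_2\vert)$; let $\gamma$ be its horizontal lift in $\fiber$ for this restriction. By Corollary \ref{horizontalGeodesicCorollary} applied to the restriction, $\gamma$ is a geodesic of $\fiber$; since $\fiber$ is totally geodesic in $\M_1$, $\gamma$ is a geodesic of $(\M_1,L_1)$; and $\dot\gamma$, being horizontal for the restriction, is $\sigma_1$-horizontal, so $\gamma$ is a $\sigma_1$-horizontal geodesic of $\M_1$. By Corollary \ref{horizontalGeodesicCorollary} applied to $\sigma_1$ — with Corollary \ref{horizontalEverywhereCorollary} keeping $\gamma$ horizontal at every instant — $\gamma$ is the horizontal lift of a geodesic of $(\M_2,L_2)$, necessarily $\sigma_1\circ\gamma=\bar\gamma$. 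Hence $\bar\gamma$ is a geodesic of $(\M_2,L_2)$; as $q$ and $\bar\gamma$ were arbitrary, $\sigma_2$ has totally geodesic fibers.

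The hard part will be the nondegeneracy required by condition (i), and the matching point for the restricted submersion in (ii): Lemma \ref{Lemma1} applies directly only to $\sigma_1$-horizontal $v$, whereas a $\sigma$-vertical $L_1$-admissible vector need be neither $\sigma_1$-vertical nor $\sigma_1$-horizontal. As for a single pseudo-Finsler submersion (see the discussion preceding Definition \ref{ATDefinition}), one may have to shrink the domain $\Ad$ of $L_1$ so that the vertical spaces stay nondegenerate, after which the $\sigma$-fibers are genuinely nondegenerate pseudo-Finsler submanifolds. Everything else is bookkeeping of the two structures through Lemma \ref{Lemma1} and \eqref{gv2}, and part (ii) amounts to two applications of Corollaries \ref{horizontalGeodesicCorollary} and \ref{horizontalEverywhereCorollary}.
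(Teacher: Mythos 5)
Your proposal is correct, and it splits naturally into two halves with respect to the paper. For (i) you follow essentially the paper's own route: the observation that a $\sigma_2\circ\sigma_1$-horizontal vector is in particular $\sigma_1$-horizontal, the use of \eqref{gv2} to transfer $g^1_v(v,e)=0$ to $g^2_{\dif\sigma_1(v)}(\dif\sigma_1(v),\tilde e)=0$ for $\sigma_2$-vertical $\tilde e$, and then the chain $L_1(v)=L_2(\dif\sigma_1(v))=L_3(\dif\sigma_2(\dif\sigma_1(v)))$ is exactly the paper's argument; your extra attention to the non-degeneracy of the composed fibers goes beyond the paper, which is silent on that condition, and your honest admission that the splitting $\ver_p=\ver^1_p\oplus(\ver_p\cap(\ver^1_p)^{\perp_v})$ via Lemma \ref{Lemma1} only covers $\sigma_1$-horizontal $v$ is fair -- the resolution you suggest (shrinking $\Ad$, as in the discussion preceding Definition \ref{ATDefinition}) is consistent with the paper's standing conventions, so this is not a defect relative to the published proof. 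For (ii), however, you take a genuinely different route. The paper works infinitesimally: it lifts a fiber geodesic $\gamma_2$ of $\sigma_2$ horizontally by $\sigma_1$, applies the dual Gauss formula (Proposition \ref{GaussProposition}, with $\tilde Q^v_vv=0$) to get $\dif\sigma_1(D^{\dot\gamma_1}_{\gamma_1}\dot\gamma_1)=\tilde D^{\dot\gamma_2}_{\gamma_2}\dot\gamma_2$, uses the hypothesis in the form $\T^{\dot\gamma_1}_{\dot\gamma_1}\dot\gamma_1=0$ to see that the left-hand side is $\sigma_2\circ\sigma_1$-vertical, and concludes $\T^{\dot\gamma_2}_{\dot\gamma_2}\dot\gamma_2=0$. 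You instead argue synthetically: you observe that $\sigma_1$ restricts to a pseudo-Finsler submersion of the big fiber $\fiber=\sigma_1^{-1}(\sigma_2^{-1}(q))$ onto the $\sigma_2$-fiber, lift a fiber geodesic horizontally inside $\fiber$, apply Corollary \ref{horizontalGeodesicCorollary} once for the restricted submersion and once for $\sigma_1$ (the appeal to Corollary \ref{horizontalEverywhereCorollary} is superfluous, since the lift is horizontal by construction), and use the definition of totally geodesic fibers directly. Both arguments are sound and both implicitly assume the existence of the (same) horizontal lift -- a $\sigma_1$-horizontal lift of a curve in $\sigma_2^{-1}(q)$ automatically stays in $\fiber$. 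The paper's computation is shorter given the tensorial machinery already in place and avoids any discussion of the restricted submersion; your version buys a reusable intermediate fact (that $\sigma_1$ restricts to a pseudo-Finsler submersion between the fibers, whose verification you sketch correctly since the restricted horizontal vectors are precisely the $\sigma_1$-horizontal ones tangent to $\fiber$) and stays entirely at the level of geodesics, at the cost of needing part (i)-type non-degeneracy for $(\fiber,L_1\vert)$ before Corollary \ref{horizontalGeodesicCorollary} can be invoked there.
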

\begin{proof}
%
 To check $(i)$, let $g^1$ and $g^2$ be the fundamental tensors of $L_1$ and $L_2$, respectively. By hypothesis every $v$ horizontal with respect to the initial submersion satisfies $L_1(v)=L_2(\dif\sigma_1(v))$. We have a similar identity for the final submersion. By definition, a vector is horizontal if and only if $g_v(v,\cdot)$ vanishes on the vertical subspace; because the vertical subspace of the composition contains the vertical subspace of the initial submersion, a horizontal vector with respect to the composition is in particular a horizontal vector for the initial submersion, such that $L_1(v)=L_2(\dif\sigma_1(v))$ is satisfied for that initial submersion, whereby \eqref{gv1} yields that $g^1_v(v,e)=0$ if and only if $ g^2_{\dif\sigma_1(v)}(\dif\sigma_1(v),\dif\sigma_1(e))=0$ for each vector $e\in T_{\pi(v)}\M_1$. Now observe that $\dif\sigma_1(v)$ is horizontal with respect to the final submersion, which concludes, since then $L_1(v)=L_2(\dif\sigma_1(v))=L_3(\dif\sigma_2(\dif\sigma_1(v)))$. This is because if $\tilde e$ is $\sigma_2$-vertical, then as $\dif\sigma_1$ is surjective, $\tilde e=\dif\sigma_1(e)$ for some $e\in T\M_1$. Moreover, $e$ is $\sigma_1\circ\sigma_2$-vertical and as $v$ is  $\sigma_1\circ\sigma_2$-horizontal, it follows that 
$g^1_v(v,e)=0$. Applying \eqref{gv2}, we conclude that $g^2_{\dif\sigma_1(v)}(\dif\sigma_1(v),\tilde e)=0$, and then $\dif\sigma_1(v)$ is $\sigma_2$-horizontal as required.

 To check $(ii)$, let $\gamma_2$ be a geodesic of a fiber of $\sigma_2$ and $\gamma_1$ one of its horizontal lifts by 
$\sigma_1$. If $\nabla$ and $\tilde\nabla$  are the Chern connections of $(\M_1,L_1)$ and $(\M_2,L_2)$, respectively, and $D_{\gamma_1}$ and $\tilde D_{\gamma_2}$ their associated covariant derivatives alogn $\gamma_1 $ and $\gamma_2$, respectively, then
  \begin{equation}\label{dsigma1}
  \dif\sigma_1(D^{\dot\gamma_1}_{\gamma_1}\dot\gamma_1)=\tilde D^{\dot\gamma_2}_{\gamma_2}\dot\gamma_2
\end{equation}  
  by the dual Gauss formula (see Proposition \ref{GaussProposition} and observe that $\tilde Q^v_vv=0$ for all $v$ horizontal). Moreover, it is easy to check that $\gamma_1$ lies in a fiber of $\sigma_2\circ\sigma_1$, and  $D^{\dot\gamma_1}_{\gamma_1}\dot\gamma_1$ is $\sigma_2\circ\sigma_1$-vertical because  $\sigma_2\circ\sigma_1$ has totally geodesic fibers. Summing up, from the last claim and \eqref{dsigma1}, 
\[\dif\sigma_2(\tilde D^{\dot\gamma_2}_{\gamma_2}\dot\gamma_2)=\dif\sigma_1( \dif\sigma_1(D^{\dot\gamma_1}_{\gamma_1}\dot\gamma_1))=0,\]
and therefore $\tilde D^{\dot\gamma_2}_{\gamma_2}\dot\gamma_2$ is $\sigma_2$-vertical, which implies that $\T^{\dot\gamma_2}_{\dot\gamma_2}\dot\gamma_2=0$, with $\T$ the corresponding O'Neill tensor of $\sigma_2$, and therefore $\sigma_2$ has totally geodesic fibers.

\end{proof}
 Let  $\sigma:(\M,L)\rightarrow (\B,\tilde L)$  be a pseudo-Finsler submersion.  Assume that the domains of $L$ and $\tilde L$ are the slit tangent bundle and $\dim\M\geq 3$.  For each piecewise smooth path $ \tilde\gamma $ of the base manifold joining $\tilde p$ and $\tilde q$, let $ F_{\tilde\gamma} :\fiber_{\tilde p}\rightarrow \fiber_{\tilde q}$ be the map defined for every $p\in \fiber_{\sigma(p)}$ as follows. Let $\gamma$ be the horizontal lift of $\tilde \gamma$ with initial point at $p$, which there exists and it is unique by Lemma \ref{legendrelemma}. Then $F_{\tilde\gamma}(p)$ is defined as the endpoint of $\gamma$. As horizontal lifts are determined by ODE's, the map $F_{\tilde\gamma}$ will be smooth because of the smooth dependence of the solutions of these EDO's from initial values. Moreover, it is a diffeomorphism because if $\tilde\beta$ is the reverse curve of $\tilde \gamma$, then $F_{\tilde \beta}$ is the smooth inverse of $F_{\tilde\gamma}$. 
Due to the relationship between (ambient) horizontal geodesics and geodesics of the base manifold, we may prove that, under the right conditions, the map $F_{\tilde\gamma}$ is an isometry.
\begin{definition}\label{horreg}
A pseudo-Finsler submersion is said to be  horizontally regular when for each admissible vertical $ v $ and projectable horizontal $ X $
\begin{equation*}
(\nabla^v_v (X^\top))^\top_v = 0.
\end{equation*}
\end{definition}
\begin{proposition}\label{isometry}
  Let $\sigma:(\M,F)\rightarrow (\B,\tilde F)$ be a  horizontally  regular Finsler submersion between complete Finsler manifolds. Then  the map $ F_{\tilde\gamma} $  is an isometry for all $\tilde \gamma$ if and only if the fibers are totally geodesic.
\end{proposition}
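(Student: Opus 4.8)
\emph{Strategy.} Both implications will follow from the single pointwise identity
\begin{equation*}
\left.\tfrac{\dif}{\dif t}\right|_{t=0} L\bigl(\dif\psi_t(u)\bigr)
=
-2\,g_u\bigl(X_p,\T^u_u u\bigr),
\end{equation*}
where $X$ is a projectable horizontal vector field on $\M$ with flow $\psi_t$, $p\in\M$, and $u\in\ver_p$ is admissible. First I would reduce the statement to the paths $\tilde\gamma$ that are (up to reparametrisation) integral curves of a vector field $\tilde X$ on $\B$: an arbitrary piecewise smooth $\tilde\gamma$ is locally of this form and $F_{\tilde\gamma}$ is functorial under concatenation, so the proposition for these paths implies it in general, while for these paths it is a weakening of the hypothesis. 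Given such an $\tilde X$, lift it to a (locally defined) horizontal vector field $X$ on $\M$ via the Legendre transform, as in the Remark after Lemma \ref{legendrelemma}; its flow $\psi_t$ preserves the fibers (since $X$ is projectable), and its restriction to a fiber coincides with $F_{\tilde\gamma|_{[0,t]}}$, because the integral curves of $X$ are precisely the horizontal lifts of $\tilde\gamma$. Thus $\dif\psi_t(u)=\dif\bigl(F_{\tilde\gamma|_{[0,t]}}\bigr)(u)$ for every vertical $u$, and completeness guarantees that these horizontal lifts are defined on the whole parameter interval (cf.\ the discussion preceding Definition \ref{horreg}).

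\emph{Proof of the identity.} Fix $p$ and an admissible vertical $u\in\ver_p$, and extend $u$ to a vertical vector field $\hat U$ near $p$ with $[X,\hat U]=0$ (extend $u$ along a hypersurface transverse to $X$ and flow it out). Then $\hat U$ is $\psi_t$-invariant, so $\dif\psi_t(u)=\hat U_{\psi_t(p)}$ and $\left.\tfrac{\dif}{\dif t}\right|_{0}L(\dif\psi_t(u))=X_p\bigl(g_{\hat U}(\hat U,\hat U)\bigr)$. By the almost-compatibility of the Chern connection together with $C_u(u,\cdot,\cdot)=0$ this equals $2\,g_u(\nabla^u_{X_p}\hat U,u)$, and torsion-freeness with $[X,\hat U]=0$ turns it into $2\,g_u(\nabla^u_u X,u)$. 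Now I decompose $X=X^\bot+X^\top$ as anisotropic vector fields (Lemma \ref{XT}) and use \eqref{nablaP}: by horizontal regularity $(\nabla^u_u X^\top)^\top_u=0$, hence $g_u(\nabla^u_u X^\top,u)=0$; and since $u$ is vertical, $\chi^u_u X_p=\T^u_u X_p$, so $(\nabla^u_u X^\bot)^\top_u=\T^u_u X_p-(\nabla^u_u X^\top)^\bot_u$, the last summand being $g_u$-orthogonal to $u$. Therefore $g_u(\nabla^u_u X,u)=g_u(\T^u_u X_p,u)=-g_u(X_p,\T^u_u u)$ by the skew-symmetry of $\T^u_u$ in Proposition \ref{TAProposition}, which is the claimed identity.

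\emph{The two implications.} If the fibers are totally geodesic, then $\T^w_w w=0$ for every admissible vertical $w$ (the equivalence is recorded before Definition \ref{horreg}); applying the identity at the base point $\psi_t(p)$ with the admissible vertical vector $\dif\psi_t(u)$ (which is vertical and non-zero because $\psi_t$ maps fibers diffeomorphically onto fibers) and using $\dif\psi_s\circ\dif\psi_t=\dif\psi_{s+t}$, one gets $\tfrac{\dif}{\dif t}L(\dif\psi_t(u))\equiv 0$, hence $L(\dif F_{\tilde\gamma}(u))=L(u)$ for all vertical $u$; since $F_{\tilde\gamma}\colon\fiber_{\tilde p}\to\fiber_{\tilde q}$ is a diffeomorphism and the induced Finsler metric on the fibers is the restriction of $L$, it is an isometry. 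Conversely, if $F_{\tilde\gamma}$ is an isometry for every $\tilde\gamma$, then $L(\dif\psi_t(u))$ is constant for every flow $\psi_t$ as above, so the identity forces $g_u(X_p,\T^u_u u)=0$ for all $p$, all admissible vertical $u$, and all $X_p\in\hor_p$ (as $\tilde X$ ranges over all vector fields, $X_p$ ranges over all of $\hor_p$ by Lemma \ref{legendrelemma}).

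\emph{The main point.} At this last step $\hor_p$ is only $m$-dimensional, so a priori the relation $g_u(X_p,\T^u_u u)=0$ looks too weak to conclude; this is the genuine obstacle. The resolution is that $\T^u_u u$ is $g_u$-\emph{horizontal} by Proposition \ref{TAProposition}, so $g_u(X_p,\T^u_u u)=g_u\bigl((X_p)^\bot_u,\T^u_u u\bigr)$, and $(X_p)^\bot_u$ is the $g_u$-horizontal lift of $\dif\sigma(X_p)$, which ranges over an open subset of the $m$-dimensional $g_u$-horizontal subspace as $\dif\sigma(X_p)$ ranges over $\tilde\Ad_{\sigma(p)}$. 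Hence $\T^u_u u$ is $g_u$-orthogonal to the whole $g_u$-horizontal subspace, on which $g_u$ is non-degenerate, so $\T^u_u u=0$ for every admissible vertical $u$, i.e.\ the fibers are totally geodesic. Besides this horizontality of $\T^u_u u$, I expect the only delicate part to be the bookkeeping with the fiber derivatives in \eqref{nablaP} when splitting $X=X^\bot+X^\top$.
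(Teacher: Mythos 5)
Your proposal is correct, but it takes a genuinely different route from the paper's proof. The paper argues variationally: it takes a curve $\alpha_0$ in the fiber, builds the two-parameter family $\lambda(s,t)$ of horizontal lifts of $\tilde\gamma$, and computes the first variation of the length of the fiber curves $\alpha_s$, where the commutation of covariant derivatives, horizontal regularity and skew-symmetry reduce the integrand to $-g_{\dot\alpha_s}(X_s,\T^{\dot\alpha_s}_{\dot\alpha_s}\dot\alpha_s)$ as in \eqref{previous}; in the forward direction it then passes from length preservation to distance preservation via minimizing geodesics (this is where completeness is used essentially), and in the converse it extracts pointwise vanishing from the vanishing integral by a mean-value-theorem/limiting argument, ending with the same non-degeneracy step you make explicit. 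You instead reduce to paths that are integral curves of fields $\tilde X$, realize $F_{\tilde\gamma}$ as the flow $\psi_t$ of the projectable horizontal lift $X$ restricted to the fibers, and prove the pointwise identity $\left.\tfrac{\dif}{\dif t}\right|_{t=0}L(\dif\psi_t(u))=-2g_u(X_p,\T^u_uu)$ via a commuting vertical extension, torsion-freeness, almost metric compatibility, horizontal regularity and skew-symmetry of $\T$ --- the same tensorial heart as the paper's computation, packaged infinitesimally. This buys a cleaner forward implication (you obtain $L\circ\dif F_{\tilde\gamma}=L$ on vertical vectors directly, using completeness only so that the lifts/flow exist, not for minimizing geodesics) and a converse without the mean-value limiting step; moreover your ``main point'' correctly supplies what the paper compresses into ``conclude by non-degeneracy'', namely that $\T^u_uu$ is $g_u$-horizontal and that the $g_u$-horizontal parts of horizontal lifts span the $g_u$-horizontal subspace. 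The price is the reduction step: concatenation plus covering by subintervals where $\tilde\gamma$ embeds handles piecewise regular curves, but a piece with vanishing velocity is not literally an integral curve of a vector field, so a brief remark (reparametrisation/continuity, or reading ``piecewise smooth'' as piecewise regular, as the paper implicitly does) is needed there, whereas the paper's variational setup treats an arbitrary $\tilde\gamma$ without any such reduction.
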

\begin{proof}
 Assume first that the fibers are totally geodesic.   To show that $ F_{\tilde\gamma} $ is an isometry, assume that $ \tilde\gamma $ is parametrised in $[0,1]$ such that $ \tilde\gamma(0) = \tilde{p} $ and $ \tilde \gamma(1) = \tilde{q} $.  Consider a smooth  admissible  curve
$\alpha_0:[0,1]\rightarrow \fiber_{\tilde p}$ and then define $\lambda:[0,1]\times [0,1]\rightarrow \M$ such for every $t\in[0,1]$, $s\rightarrow \lambda(s,t)$ is the horizontal lift of $\tilde \gamma$ with initial point at $\alpha_{ 0}(t)$. 
 Denote  $\alpha_s$ the curve in the fiber $\fiber_{\tilde\gamma(s)}$ defined as $\alpha_s(t)=\lambda(s,t)$ and $\beta_t$ the horizontal  curve given by $\beta_t(s)=\lambda(s,t)$. Let $X_s(t)=\frac{d}{d s}\lambda(s,t)=\dot\beta_t(s)$ be one of the variational vector fields of $\lambda$. Observe that $X_s$ is always horizontal.  Then, recalling the commutation of the covariant derivative in a two-parameter map (see  \cite[Prop. 3.2]{Jav14}), 
\begin{equation}\label{previous}
(D^{\dot\alpha_s}_{\beta_t} \dot\alpha_s)^\top_{\dot\alpha_s}
=
(D^{\dot\alpha_s}_{\alpha_s} X_s)^\top_{\dot\alpha_s}
=
(D^{\dot\alpha_s}_{\alpha_s} X_s^\top)^\top_{\dot\alpha_s}
+
\T^{\dot\alpha_s}_{\dot\alpha_s} (X_s)^\bot_{\dot\alpha_s}
=
\T^{\dot\alpha_s}_{\dot\alpha_s} {(X_s)}^\bot_{\dot\alpha_s}
=
0
\end{equation}
by  horizontal  regularity and total geodesicity of the fibers, therefore 
\begin{equation*}
g_{\dot\alpha_s}(D^{\dot\alpha_s}_{\beta_t} \dot\alpha_s,\dot\alpha_s)
=
0
\,.
\end{equation*}
Let us now show that $ \alpha_1 = F_{\tilde\gamma} \circ \alpha_0 $ has the same length as $ \alpha_0 $. Assuming that $  F(\dot\alpha_s)^2 = g_{\dot\alpha_s}(\dot\alpha_s,\dot\alpha_s) $ is non-negative, and by the Leibniz integral rule, the variation
\begin{equation*}
\tfrac{\partial}{\partial s} \int^1_0 \sqrt{g_{\dot\alpha_s}(\dot\alpha_s,\dot\alpha_s)} \mathrm{d} t
=
\int^1_0 \tfrac{\partial}{\partial s} \sqrt{g_{\dot\alpha_s}(\dot\alpha_s,\dot\alpha_s)} \mathrm{d} t
=
\int^1_0 \tfrac{\frac{\partial}{\partial s} g_{\dot\alpha_s}(\dot\alpha_s,\dot\alpha_s)}{\sqrt{g_{\dot\alpha_s}(\dot\alpha_s,\dot\alpha_s)}} \mathrm{d} t
\end{equation*}
of the length of $ \alpha_s $ is zero. Indeed,
\begin{equation*}
\tfrac{\partial}{\partial s} g_{\dot\alpha_s}(\dot\alpha_s,\dot\alpha_s)
=
2 g_{\dot\alpha_s}(D^{\dot\alpha_s}_{\beta_t} \dot\alpha_s,\dot\alpha_s)
+
2 C_{\dot\alpha_s}(\dot\alpha_s,\dot\alpha_s,D^{\dot\alpha_s}_{\beta_t} \dot\alpha_s)
=
0.
\end{equation*}
Consequently, the image by $ F_{\tilde\gamma} $ of any vertical geodesic between two fixed points $ p $ and $ p' $ has the same length. A geodesic that attains the minimal length exists by completeness, 
therefore, given $p_1,p_2\in \fiber_{\tilde p} $, it follows that $d_{\fiber_{\tilde p}}(p_1,p_2)\geq d_{\fiber_{\tilde q}}(F_{\tilde\gamma} (p_1),F_{\tilde\gamma} (p_2))$ and reasoning with the inverse $F_{\tilde \beta}$, one obtains the other inequality, concluding that $F_{\tilde\gamma} $ is an isometry.

Conversely,   assume that the maps $F_{\tilde\gamma}$ are always isometries. Since by  the first two identities in \eqref{previous}  and skew-symmetry
\begin{equation*}
g_{\dot\alpha_s}(D^{\dot\alpha_s}_{\beta_t} \dot\alpha_s,\dot\alpha_s)
=
g_{\dot\alpha_s}(\T^{\dot\alpha_s}_{\dot\alpha_s} X_s,\dot\alpha_s)
=
-
g_{\dot\alpha_s}(X_s,\T^{\dot\alpha_s}_{\dot\alpha_s} \dot\alpha_s)
\,,
\end{equation*}
we have that if
\begin{equation*}
\int^1_0 \tfrac{\frac{\partial}{\partial s} g_{\dot\alpha_s}(\dot\alpha_s,\dot\alpha_s)}{\sqrt{g_{\dot\alpha_s}(\dot\alpha_s,\dot\alpha_s)}} \mathrm{d} t
=
\tfrac{\partial}{\partial s} \int^1_0 \sqrt{g_{\dot\alpha_s}(\dot\alpha_s,\dot\alpha_s)} \mathrm{d} t
=
0
\end{equation*}
then, by the mean value theorem, for some $t_1\in[0,1]$
\begin{equation*}
g_{\dot\alpha_s(t_1)}(X_s(t_1),\T^{\dot\alpha_s(t_1)}_{\dot\alpha_s(t_1)} \dot\alpha_s(t_1))
=
\tfrac{\partial}{\partial s} g_{\dot\alpha_s(t_1)}(\dot\alpha_s(t_1),\dot\alpha_s(t_1))
=
0
\,.
\end{equation*}
By the same argument we obtain a sequence of values $t_n\in[0,\frac{1}{2^n}]$ converging to $0$ for which $g_{\dot\alpha_s(t_n)}(X_s(t_n),\T^{\dot\alpha_s(t_n)}_{\dot\alpha_s(t_n)} \dot\alpha_s(t_n))$ is zero, and by continuity
\begin{equation*}
g_{\dot\alpha_s(0)}(X_s(0),\T^{\dot\alpha_s(0)}_{\dot\alpha_s(0)} \dot\alpha_s(0))
=
0
\,.
\end{equation*}
Conclude by non-degeneracy.
\end{proof}
\begin{corollary}
For a horizontally regular Finsler submersion with totally geodesic fibers, the flow of a projectable horizontal geodesic vector field gives rise to an isometry between the fibers.
\end{corollary}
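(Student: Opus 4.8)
The plan is to recognise the flow of the geodesic vector field as a fibre transport map $F_{\tilde\gamma}$ of Proposition \ref{isometry} and then quote that proposition. Let $V$ be a projectable horizontal geodesic vector field on $\M$ and let $\tilde V$ be its projection to $\B$; since $V$ is horizontal and nowhere zero, and its integral curves are geodesics, Corollary \ref{horizontalGeodesicCorollary} shows that each integral curve of $V$ is the horizontal lift of a geodesic of $(\B,\tilde L)$. In particular $\tilde V$ is itself a geodesic vector field, and, $V$ being $\sigma$-related to $\tilde V$, the (local) flow $\phi_t$ of $V$ intertwines with the flow $\tilde\phi_t$ of $\tilde V$, i.e. $\sigma\circ\phi_t=\tilde\phi_t\circ\sigma$; by geodesic completeness of $\M$ the flow $\phi_t$ is in fact defined for all $t$.

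First I would check that $\phi_t$ permutes the fibres. Fix $\tilde p\in\B$ and let $\tilde\gamma$ be the integral curve of $\tilde V$ with $\tilde\gamma(0)=\tilde p$. If $p\in\fiber_{\tilde p}$, then $\sigma(\phi_t(p))=\tilde\phi_t(\tilde p)=\tilde\gamma(t)$, which does not depend on $p$; hence $\phi_t(\fiber_{\tilde p})\subseteq\fiber_{\tilde\gamma(t)}$, and applying the same reasoning to $\phi_{-t}$ (whose projection $\tilde\phi_{-t}$ sends $\tilde\gamma(t)$ back to $\tilde p$) shows that $\phi_t$ restricts to a diffeomorphism $\phi_t\vert_{\fiber_{\tilde p}}\colon\fiber_{\tilde p}\to\fiber_{\tilde\gamma(t)}$.

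Next I would identify this restriction with the fibre transport along $\tilde\gamma\vert_{[0,t]}$. For a fixed $p\in\fiber_{\tilde p}$, the curve $s\mapsto\phi_s(p)$ is horizontal because $V$ is horizontal, and it projects onto $\tilde\gamma$, so by the uniqueness of horizontal lifts (Lemma \ref{legendrelemma}) it is precisely the horizontal lift of $\tilde\gamma$ issuing from $p$. Its endpoint at parameter $t$ is by definition $F_{\tilde\gamma\vert_{[0,t]}}(p)$, whence $\phi_t\vert_{\fiber_{\tilde p}}=F_{\tilde\gamma\vert_{[0,t]}}$. Since the submersion is horizontally regular and its fibres are totally geodesic, Proposition \ref{isometry} asserts that $F_{\tilde\gamma\vert_{[0,t]}}$ is an isometry, which is the claim.

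I do not expect a genuine obstacle: the substance is already contained in Corollary \ref{horizontalGeodesicCorollary} and Proposition \ref{isometry}, and the only point requiring care is the bookkeeping that shows the integral curves of $V$ are literally the horizontal lifts appearing in the definition of $F_{\tilde\gamma}$ — which is immediate from uniqueness of horizontal lifts — together with the harmless observation that completeness guarantees $\phi_t$ is globally defined.
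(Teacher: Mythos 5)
Your proposal is correct and follows essentially the same route as the paper: identify the flow of the projectable horizontal geodesic field, restricted to a fiber, with the transport map $F_{\tilde\gamma}$ along the projected geodesic (the projection being a geodesic by the dual Gauss formula, i.e.\ Corollary \ref{horizontalGeodesicCorollary}), and then invoke Proposition \ref{isometry}. Your extra bookkeeping (flow intertwining, uniqueness of horizontal lifts) only spells out what the paper leaves implicit, and the appeal to completeness is harmless.
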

\begin{proof}
Consider on some neighbourhood a projectable vector field satisfying the geodesic equation. Provided it is horizontal, each of the geodesic arcs described by its flow is the horizontal lift of some geodesic arc $ \tilde\gamma $ described by the flow of its projection, which satisfies the geodesic equation by the dual Gauss formula; by construction $ F_{\tilde\gamma} $ is that horizontal flow, which we have just shown to be an isometry between the fibers  in Proposition \ref{isometry}. 
\end{proof}
\begin{theorem}\label{totgeoth}
If the ambient manifold of a Finsler submersion is connected and geodesically complete, then its base manifold is geodesically complete. If additionally the submersion is horizontally regular and has totally geodesic fibers, then it is the projection of a bundle associated with a principal fiber bundle whose structure group is the Lie group of isometries of the fiber.
\end{theorem}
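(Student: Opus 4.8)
The plan is to treat the two assertions separately: the completeness of the base is a one-line consequence of the results already established about horizontal geodesics, while the bundle statement is the Finslerian analogue of Hermann's theorem and requires assembling a principal bundle out of the holonomy-type maps $F_{\tilde\gamma}$.

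For the first claim, I would argue by contradiction. Let $\tilde\gamma\colon[0,b)\to\B$ be an inextensible unit-speed geodesic of $(\B,\tilde L)$; since $L$ is defined on the slit tangent bundle and $\dim\M\ge 3$, Lemma \ref{legendrelemma} furnishes, at a chosen $p\in\fiber_{\tilde\gamma(0)}$, a horizontal vector $v\in\hor_p$ with $\dif\sigma(v)=\dot{\tilde\gamma}(0)$. Let $\gamma$ be the geodesic of $(\M,L)$ with $\dot\gamma(0)=v$; by geodesic completeness of $\M$ it is defined on all of $[0,\infty)$, by Corollary \ref{horizontalEverywhereCorollary} it is horizontal everywhere, and by Corollary \ref{horizontalGeodesicCorollary} its projection $\sigma\circ\gamma$ is a geodesic of $\B$ with the same initial velocity as $\tilde\gamma$. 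Uniqueness of geodesics forces $\sigma\circ\gamma=\tilde\gamma$ on $[0,b)$, so $\tilde\gamma$ extends beyond $b$, whence $b=\infty$; thus $\B$ is geodesically complete (and connected, being the continuous image of the connected $\M$).

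For the second claim, assume in addition that $\sigma$ is horizontally regular with totally geodesic fibers. First I would record that each fiber $\fiber_q$ is a totally geodesic submanifold of a complete manifold, hence a complete Finsler manifold in its own right, and that by Proposition \ref{isometry} the maps $F_{\tilde\gamma}\colon\fiber_{\tilde p}\to\fiber_{\tilde q}$ built from horizontal lifts (well defined globally because a finite-length horizontal lift cannot escape a complete $\M$) are isometries; in particular all fibers over the connected base are isometric. Fix $q_0\in\B$, set $\fiber:=\fiber_{q_0}$ and let $G:=\mathrm{Isom}(\fiber,L|_{\fiber})$, which is a Lie group acting smoothly on $\fiber$ by the Finslerian Myers--Steenrod theorem. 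I would then form
\[
P=\bigsqcup_{q\in\B}\mathrm{Isom}(\fiber,\fiber_q),\qquad \rho\colon P\to\B,
\]
with the free right $G$-action $(q,\phi)\cdot g=(q,\phi\circ g)$, which is transitive on each $\rho$-fiber since any two isometries $\fiber\to\fiber_q$ differ by an element of $G$. To make $\rho$ a principal $G$-bundle I would construct local sections: in a chart around a point $q_1\in\B$, let $\tilde\gamma_q$ denote the curve joining $q_1$ to $q$ given by the straight segment in the chart; these vary smoothly with $q$, and by the smooth dependence of horizontal lifts on parameters (already invoked in the definition of $F_{\tilde\gamma}$) the map $q\mapsto F_{\tilde\gamma_q}$ is a smooth family of isometries, so composing with a fixed element of $\mathrm{Isom}(\fiber,\fiber_{q_1})$ yields a smooth local section and hence a local trivialization $\mathcal U\times G\xrightarrow{\ \sim\ }\rho^{-1}(\mathcal U)$; on overlaps the transition maps are obtained by composing two such families, hence are smooth with values in $G$. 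Finally, the evaluation map $P\times_G\fiber\to\M$, $[(q,\phi),m]\mapsto\phi(m)$, is well defined, is a bijection onto $\fiber_q$ over each $q$, and is a diffeomorphism in the trivializations just built, intertwining the two projections onto $\B$; therefore $\sigma$ is the projection of the bundle associated with the principal $G$-bundle $\rho$ and the $G$-action on $\fiber$, as claimed.

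The main obstacle I anticipate is the verification that $P$ really carries a smooth principal-bundle structure: one must be careful that the isometries $F_{\tilde\gamma}$ depend smoothly (not just continuously) on the base curve in a way compatible with the Lie group structure of $\mathrm{Isom}(\fiber)$, so that the chosen families of connecting curves glue into a cocycle taking values in $G$; this mixes the smooth dependence of solutions of the horizontal-lift ODE on initial data with the smoothness of the $G$-action on $\fiber$. The remaining steps are formal and parallel the Riemannian argument in \cite[Chapter 9]{Besse}.
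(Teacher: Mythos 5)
Your proposal is correct and follows essentially the same route as the paper: completeness of the base by projecting extensions of horizontal lifts of geodesics, and the principal bundle $\bigsqcup_{q\in\B}\mathrm{Isom}(\fiber,\fiber_q)$ trivialized by the isometries $F_{\tilde\gamma}$ along connecting curves, with $\M$ recovered as the associated bundle through the evaluation map. The only cosmetic differences are that the paper covers $\B$ by geodesically convex sets and uses geodesic arcs (rather than chart segments) for the connecting curves, and it realizes the associated bundle as a quotient of $\mathbf{E}\times\mathbf{F}$ via the quotient manifold theorem, checking that the action is free and proper by means of the averaged Riemannian metric, whereas you build it directly from the local trivializations.
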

\begin{proof}
First recall that the isometries of a Finsler manifold form a Lie group (see \cite{2002DengHou} for further details).

For completeness, observe that each geodesic arc is extended by the projection of the extension of its horizontal lift. For the second part of the theorem, fix a point $ \tilde{p} $ in the base manifold $ \B $ and denote by $ \mathbf{F} $ its submersion fiber  at some point $\tilde p$, namely, $\mathbf{F}=\fiber_{\tilde p}$.  Let us futhermore denote by $ \mathbf{G} $ the Lie group of isometries of $ \mathbf{F} $, by $ \mathbf{G}_{\tilde{q}} $ the set of isometries from $ \mathbf{F} $ to $ \sigma^{-1}(\lbrace \tilde{q} \rbrace) $, and by $ \mathbf{E} $ their union. Note how $ \mathbf{G} $ acts (diffeomorphically)  and freely  on $ \mathbf{E} $ as
\begin{equation*}
\begin{array}{ccc}
\mathbf{G} \times \mathbf{E} & \to & \mathbf{E}
\\
(\mathbf{g},\mathbf{e}) & \mapsto & \mathbf{e} \circ \mathbf{g}
\mathrlap{\,.}
\end{array}
\end{equation*}
 Let us show that $ \mathbf{E} $ is a fiber bundle over $ \B $ for the map $ \mu \colon \mathbf{E} \to \B $ sending each $ \mathbf{G}_{\tilde{q}} $ onto $ \tilde{q} $, with structure group $ \mathbf{G} $, equipped with the differentiable structure obtained as follows.

For the choice of a point $ \tilde{q}_i $ in each set of some open covering of $ \B $ by geodesically convex sets $ \mathbf{U}_i $, and the choice of a smooth path $ \tilde\gamma_i $ joining $ \tilde{p} $ to $ \tilde{q}_i $, there exists, at each point $ \tilde{q} $ and for each $ \mathbf{U}_i $ that contains it, a unique geodesic arc $ \tilde\gamma_{i,\tilde{q}} $ from $ \tilde{q}_i $ to $ \tilde{q} $. By the previous proposition, both $ \tilde\gamma_i $ and $ \tilde\gamma_{i,\tilde{q}} $ induce an isometry between the corresponding submersion fibers, respectively $ F_{\tilde\gamma_i} $ and $ F_{\tilde\gamma_{i,\tilde{q}}} $. Let us define a local section $ \mathbf{e}_i $ on each $ \mathbf{U}_i \ni \tilde{q} $ as the unique isometry
\begin{equation*}
\mathbf{e}_i(\tilde{q}) = F_{\tilde\gamma_{i,\tilde{q}}} \circ F_{\tilde\gamma_i} \in \mathbf{G}_{\tilde{q}}
\,.
\end{equation*}
When $ \mathbf{U}_i \cap \mathbf{U}_j \ni \tilde{q} $, we can define the transition functions
\begin{equation*}
\mathbf{g}_{i\!j}(\tilde{q}) = \mathbf{e}^{-1}_j(\tilde{q}) \circ \mathbf{e}_i(\tilde{q}) \in \mathbf{G}
\,,
\end{equation*}
satisfying
\begin{equation*}
\mathbf{e}_i(\tilde{q}) = \mathbf{e}_j(\tilde{q}) \circ \mathbf{g}_{i\!j}(\tilde{q})
\end{equation*}
and the cocycle condition with respect to the group action
\begin{equation*}
\mathbf{g}_{ik}(\tilde{q}) = \mathbf{g}_{jk}(\tilde{q}) \circ \mathbf{g}_{i\!j}(\tilde{q})
\,.
\end{equation*}
By the fiber bundle construction theorem, $ \mathbf{E} $ is a fiber bundle over $ \mathcal{B} $ for the map $ \mu $.
Denote by $ \mathbf{E}' $ the quotient of $ \mathbf{E} \times \mathbf{F} $ by the (diffeomorphic) action
\begin{equation*}
\begin{array}{ccc}
\mathbf{G} \times \mathbf{E} \times \mathbf{F} & \to & \mathbf{E} \times \mathbf{F}
\\ 
(\mathbf{g},(\mathbf{e},p)) & \mapsto & (\mathbf{e} \circ \mathbf{g},\mathbf{g}^{-1}(p))
\mathrlap{\,.}
\end{array}
\end{equation*}
 By the definition of the action, the map
\begin{equation*}
\begin{array}{ccc}
\mathbf{E} \times \mathbf{F} & \to & \B
\\
(\mathbf{e},p) & \mapsto & \mu(\mathbf{e})
\end{array}
\end{equation*}
is identically equal to $\tilde{q}$ on the whole equivalence class of $(\mathbf{e},p)$ in $ \mathbf{E}' $, and induces a well-defined projection $ \mu' \colon \mathbf{E}' \to \B $, defining a fiber bundle associated with the principal fiber bundle $ \mu \colon \mathbf{E} \to \B $. Let us now show that the submersion coincides with this associated fiber bundle.
Indeed, the action on $ \mathbf{E} \times \mathbf{F} $ is free, by freedom of the action of $ \mathbf{G} $ on $ \mathbf{E} $  and proper, because the action of $G$ on $\mathbf F$ is proper (recall that the isometries of a Finsler manifold is a closed subgroup of the isometries of the Riemannian metric obtained by averaging \cite{Torrome}). 
The quotient $ \mathbf{E}' $ of $ \mathbf{E} \times \mathbf{F} $ by the free and proper action of $ \mathbf{G} $ is a manifold by the quotient manifold theorem (see  \cite[Th. 7.10]{2000Lee}).  Finally, the map
\[ \phi: \mathbf{E}\times \mathbf{F}\rightarrow\M\]
defined as $\phi(\mathbf{e},p)=\mathbf{e}(p)$ induces a map $\phi'$ from the quotient $\mathbf{E}'$, which is in fact a diffeomorphism and maps fibers of $\mu'$ to fibers of $\sigma$. To check that $\phi'$ is surjective observe that fixing $\mathbf{e}\in\mathbf{E}$, $\phi'(\mathbf{e},\cdot)$ maps $\mathbf{F}$ to a fiber of $\M$ diffeomorphically. Moreover, if 
$\phi(\mathbf{e},p)=\phi(\mathbf{e}',p')$, then $\mathbf{e}^{-1}\circ\mathbf{e}'\in \mathbf{G}$ and 
$(\mathbf{e},p)$ and $(\mathbf{e}', p')$ are representatives of the same class under the action of $\mathbf{G}$. 
\end{proof}

\end{document}